\documentclass[a4paper,10pt]{amsart}

\usepackage[utf8]{inputenc}
\usepackage{stackrel} 
\usepackage{graphics,graphicx} 	
\usepackage{color} 		
\usepackage{amssymb,amsmath}	
\usepackage[all]{xy}		
\usepackage{subfigure}		
\usepackage{longtable}		
\usepackage{epstopdf}		
\usepackage{pgf}
\usepackage{tikz}		
\usetikzlibrary{calc}		
\usetikzlibrary{arrows,scopes}
\usepackage{hyperref}		


\setlength{\parindent}{0pt}		
\setlength{\parskip}{\medskipamount} 	

\numberwithin{equation}{section}	
\setcounter{tocdepth}{1}  		


\newtheorem*{introtheorem}{Theorem}
\newtheorem{theorem}{Theorem}[section]
\newtheorem{lemma}[theorem]{Lemma}
\newtheorem{proposition}[theorem]{Proposition}
\newtheorem{corollary}[theorem]{Corollary}

\theoremstyle{definition}
\newtheorem{definition}[theorem]{Definition}
\newtheorem{example}[theorem]{Example}
\newtheorem{remark}[theorem]{Remark}
\newtheorem{construction}[theorem]{Construction}


\newcommand{\KK}{\mathbb K}
\newcommand{\PP}{\mathbb P}
\newcommand{\QQ}{\mathbb Q}
\newcommand{\ZZ}{\mathbb Z}

\newcommand{\seite}{\preceq}
\newcommand{\Mat}{{\rm Mat}}

\newcommand{\gq}{/\!\! /}
\newcommand{\Spec}{\mathrm{Spec}}

\renewcommand{\rho}{\varrho}

\renewcommand{\phi}{\varphi}
\renewcommand{\epsilon}{\varepsilon}

\newcommand{\subrot}[1]{{\rotatebox{#1}{\scriptsize $\subseteq$}}}

\def\gitlim{
\!\!\!
\raisebox{0.09cm}{
\scalebox{.47}{
$
\stackbin[{\rm Lim}]{\hphantom{...}\rm GIT}{\hphantom{.}\boldsymbol{/}}
$
}
}
\!
}

\def\nlimquot{
\!\!\!
\raisebox{0.09cm}{
\scalebox{.47}{
$
\stackbin[{\rm LQ}]{\hphantom{...}\sim}{\hphantom{.}\boldsymbol{/}}
$
}
}
\!
}

\def\limquot{
\!\!\!
\raisebox{0.09cm}{
\scalebox{.47}{
$
\stackbin[{\rm LQ}]{\hphantom{...}}{\hphantom{.}\boldsymbol{/}}
$
}
}
\!
}

\begin{document}

\title[Point Configurations and Translations]%
{Point Configurations and Translations}

\author[H.~B\"aker]{Hendrik B\"aker}

\email{hendrik.baeker@uni-tuebingen.de}
\address{Mathematisches Institut,
Universit\"at T\"ubingen,
Auf der Morgenstelle 10,
72076 T\"u\-bingen,
Germany}

\subjclass[2010]{
14L24,\ 
14L30
}

\begin{abstract}
The spaces of point configurations on the projective line up to the action of $\mathrm{SL}(2,\KK)$ and its maximal torus are canonically compactified by the Grothdieck-Knudsen and Losev-Manin moduli spaces $\overline M_{0,n}$ and $\overline L_n$ respectively.
We examine the configuration space up to the action of the maximal unipotent group $\mathbb G_a\subseteq\mathrm{SL}(2,\KK)$ and define an analogous compactification.
For this we first assign a canonical quotient to the action of a unipotent group on a projective variety. Moreover, we show that similar to $\overline M_{0,n}$ and $\overline L_n$ this quotient arises in a sequence of blow-ups from a product of projective spaces.
\end{abstract}

\maketitle

\def\gitfani{

\begin{tikzpicture}
  \fill[gray!30] (1,0) -- (0.5,0.88) -- (1.5,0.88) -- cycle;
  \draw (0,0) -- (2,0) -- (1,1.72) -- cycle;
  \draw (1,0) -- (0.5,0.88) -- (1.5,0.88) -- cycle;
  
  \fill (0,0) circle (.05cm);
  \node [below left] at (0,0) {{$w_{01}$}};
  
  \fill (1,0) circle (.05cm);
  \node [below] at (1,0) {{$w_{12}$}};
  
  \fill (2,0) circle (.05cm);
  \node [below right] at (2,0) {{$w_{02}$}};
  
  \fill (1,1.72) circle (.05cm);
  \node [above] at (1,1.72) {{$w_{03}$}};
  
  \fill (0.5,0.88) circle (0.05cm);
  \node [left] at (0.5,0.88) {{$w_{13}$}};
  
  \fill (1.5,0.88) circle (0.05cm);
  \node [right] at (1.5,0.88) {{$w_{23}$}};
  \end{tikzpicture}
}


\def\gitfanii{ 
  
\begin{tikzpicture}
	\def\ang{45};
	
	\def\x{0};
	\def\y{0};
	\def\z{0};
	\pgfmathsetmacro\xcor{\x+sin(\ang)*\y};
	\pgfmathsetmacro\ycor{\z+cos(\ang)*\y};
	\coordinate (w01) at ($(\xcor,\ycor)$);
        \node [below left] at (w01) {{$w_{01}$}};
        
        \def\x{2};
	\def\y{0};
	\def\z{0};
	\pgfmathsetmacro\xcor{\x+sin(\ang)*\y};
	\pgfmathsetmacro\ycor{\z+cos(\ang)*\y};
	\coordinate (w02) at ($(\xcor,\ycor)$);
        \node [below right] at (w02) {{$w_{02}$}};
        
        \def\x{1};
	\def\y{1.732};
	\def\z{0};
	\pgfmathsetmacro\xcor{\x+sin(\ang)*\y};
	\pgfmathsetmacro\ycor{\z+cos(\ang)*\y};
	\coordinate (w03) at ($(\xcor,\ycor)$);
        \node [right] at (w03) {{$w_{03}$}};
        
        \def\x{1};
	\def\y{0.577};
	\def\z{1.732};
	\pgfmathsetmacro\xcor{\x+sin(\ang)*\y};
	\pgfmathsetmacro\ycor{\z+cos(\ang)*\y};
	\coordinate (w04) at ($(\xcor,\ycor)$);
        \node [above] at (w04) {{$w_{04}$}};

	\def\x{1};
	\def\y{0};
	\def\z{0};
	\pgfmathsetmacro\xcor{\x+sin(\ang)*\y};
	\pgfmathsetmacro\ycor{\z+cos(\ang)*\y};
	\coordinate (w12) at ($(\xcor,\ycor)$);
        \node [below] at (w12) {{$w_{12}$}};
        
       	\def\x{0.5};
	\def\y{0.866};
	\def\z{0};
	\pgfmathsetmacro\xcor{\x+sin(\ang)*\y};
	\pgfmathsetmacro\ycor{\z+cos(\ang)*\y};
	\coordinate (w13) at ($(\xcor,\ycor)$);
        
        \def\x{0.5};
	\def\y{0.289};
	\def\z{0.866};
	\pgfmathsetmacro\xcor{\x+sin(\ang)*\y};
	\pgfmathsetmacro\ycor{\z+cos(\ang)*\y};
	\coordinate (w14) at ($(\xcor,\ycor)$);
        \node [left] at (w14) {{$w_{14}$}};
        
        \def\x{1.5};
	\def\y{0.866};
	\def\z{0};
	\pgfmathsetmacro\xcor{\x+sin(\ang)*\y};
	\pgfmathsetmacro\ycor{\z+cos(\ang)*\y};
	\coordinate (w23) at ($(\xcor,\ycor)$);
        \node [right] at (w23) {{$w_{23}$}};
        
        \def\x{1.5};
	\def\y{0.289};
	\def\z{0.866};
	\pgfmathsetmacro\xcor{\x+sin(\ang)*\y};
	\pgfmathsetmacro\ycor{\z+cos(\ang)*\y};
	\coordinate (w24) at ($(\xcor,\ycor)$);
        
        \def\x{1};
	\def\y{1.155};
	\def\z{0.866};
	\pgfmathsetmacro\xcor{\x+sin(\ang)*\y};
	\pgfmathsetmacro\ycor{\z+cos(\ang)*\y};
	\coordinate (w34) at ($(\xcor,\ycor)$);
        \node [right] at (w34) {{$w_{34}$}};

    \fill[gray!30] (w12) -- (w14) -- (w34) -- (w23) --cycle;    
    
    \fill (w01) circle (0.05cm);
    \fill (w02) circle (0.05cm);
    \fill (w03) circle (0.05cm);
    \fill (w04) circle (0.05cm);
    \fill (w12) circle (0.05cm);
    \fill (w13) circle (0.05cm);
    \fill (w14) circle (0.05cm);
    \fill (w23) circle (0.05cm);
    \fill (w24) circle (0.05cm);
    \fill (w34) circle (0.05cm);
    
    \draw[thick] (w01) -- (w02) -- (w04) --cycle;
    \draw[thick] (w04) -- (w03) -- (w02);
    \draw[dashed] (w03) -- (w01);
    
    \draw[thick] (w14) -- (w24) -- (w23);
    \draw[dashed] (w23) -- (w13) -- (w14);
        
    \draw[thick] (w12) -- (w24) -- (w34);
    \draw[dashed] (w34) -- (w13) -- (w12);
    
    \draw[thick] (w12) -- (w14);
    \draw[thick] (w34) -- (w23);
    \draw[dashed] (w14) -- (w34);
    \draw[dashed] (w12) -- (w23);
    
   \end{tikzpicture}
}
 

\def\gitfaniii{ 
 
 \begin{tikzpicture}
	\def\ang{35};

	\def\x{1};
	\def\y{0};
	\def\z{0};
	\pgfmathsetmacro\xcor{\x+sin(\ang)*\y};
	\pgfmathsetmacro\ycor{\z+cos(\ang)*\y};
	\coordinate (w12) at ($(\xcor,\ycor)$);
                
       	\def\x{0.5};
	\def\y{0.866};
	\def\z{0};
	\pgfmathsetmacro\xcor{\x+sin(\ang)*\y};
	\pgfmathsetmacro\ycor{\z+cos(\ang)*\y};
	\coordinate (w13) at ($(\xcor,\ycor)$);
                
        \def\x{0.5};
	\def\y{0.289};
	\def\z{0.866};
	\pgfmathsetmacro\xcor{\x+sin(\ang)*\y};
	\pgfmathsetmacro\ycor{\z+cos(\ang)*\y};
	\coordinate (w14) at ($(\xcor,\ycor)$);
                
        \def\x{1.5};
	\def\y{0.866};
	\def\z{0};
	\pgfmathsetmacro\xcor{\x+sin(\ang)*\y};
	\pgfmathsetmacro\ycor{\z+cos(\ang)*\y};
	\coordinate (w23) at ($(\xcor,\ycor)$);
                
        \def\x{1.5};
	\def\y{0.289};
	\def\z{0.866};
	\pgfmathsetmacro\xcor{\x+sin(\ang)*\y};
	\pgfmathsetmacro\ycor{\z+cos(\ang)*\y};
	\coordinate (w24) at ($(\xcor,\ycor)$);
                
        \def\x{1};
	\def\y{1.155};
	\def\z{0.866};
	\pgfmathsetmacro\xcor{\x+sin(\ang)*\y};
	\pgfmathsetmacro\ycor{\z+cos(\ang)*\y};
	\coordinate (w34) at ($(\xcor,\ycor)$);
        
    \fill[gray!30] (w12) -- (w14) -- (w34) -- (w23) --cycle;

    \fill (w12) circle (0.05cm);
    \fill (w13) circle (0.05cm);
    \fill (w14) circle (0.05cm);
    \fill (w23) circle (0.05cm);
    \fill (w24) circle (0.05cm);
    \fill (w34) circle (0.05cm);
    
    \draw[thick] (w14) -- (w12) -- (w23) -- (w34) -- cycle;
    \draw (w23) -- (w13) -- (w14);
        
    \draw[thick] (w12) -- (w24) -- (w34);
    \draw (w34) -- (w13) -- (w12);
    
    \draw[thick] (w14) -- (w24) -- (w23);
    \draw (w14) -- (w13) -- (w23);

    \draw[dashed] (w12) -- (w34);
    \draw[dashed] (w13) -- (w24);
    \draw[dashed] (w14) -- (w23);
    
    \node [below] at (w12) {{$w_{12}$}};
    \node [left] at (w13) {{$w_{13}$}};
    \node [left] at (w14) {{$w_{14}$}};
    \node [right] at (w23) {{$w_{23}$}};
    \node [right] at (w24) {{$w_{24}$}};
    \node [right] at (w34) {{$w_{34}$}};
    
   \end{tikzpicture}
}


\def\blowupi{
\begin{tikzpicture}
  \draw (0,0) -- (1,0) -- (0.5,0.86) -- cycle;
  \draw (0.5,0) -- (0.5,0.86);
  \draw (0.5,0) -- (0.75,0.43);
  
  \fill (0,0) circle (.05cm);
  \node [below left] at (0,0) {{$e_1$}};
  \fill (1,0) circle (.05cm);
  \node [below right] at (1,0) {{$e_2$}};
  \fill (0.5,0.86) circle (.05cm);
  \node [above] at (0.5,0.86) {{$e_3$}};
  \fill (0.5,0) circle (.05cm);
  \node [below] at (0.5,0) {{$\nu_1$}};
  \fill (0.75,0.43) circle (0.05cm);
  \node [right] at (0.75,0.43) {{$\nu_2$}};
 \end{tikzpicture}
 }

 \def\blowupii{
 \begin{tikzpicture}
  \draw (0,0) -- (1,0) -- (0.5,0.86) -- cycle;
  \draw (0,0) -- (0.75,0.43);
  \draw (0.5,0) -- (0.75,0.43);
  
  \fill (0,0) circle (.05cm);
  \node [below left] at (0,0) {{$e_1$}};
  \fill (1,0) circle (.05cm);
  \node [below right] at (1,0) {{$e_2$}};
  \fill (0.5,0.86) circle (.05cm);
  \node [above] at (0.5,0.86) {{$e_3$}};
  \fill (0.5,0) circle (.05cm);
  \node [below] at (0.5,0) {{$\nu_1$}};
  \fill (0.75,0.43) circle (0.05cm);
  \node [right] at (0.75,0.43) {{$\nu_2$}};
 \end{tikzpicture}
 }

 \def\blowupiii{
 \begin{tikzpicture}
  \draw (0,0) -- (1,0) -- (0.5,0.86) -- cycle;
  \draw (0.5,0) -- (0.5,0.86);
  \draw (0,0) -- (0.75,0.43);
  \draw (0.5,0.29) -- (1,0);
  
  \fill (0,0) circle (.05cm);
  \node [below left] at (0,0) {{$e_1$}};
  \fill (1,0) circle (.05cm);
  \node [below right] at (1,0) {{$e_2$}};
  \fill (0.5,0.86) circle (.05cm);
  \node [above] at (0.5,0.86) {{$e_3$}};
  \fill (0.5,0) circle (.05cm);
  \node [below] at (0.5,0) {{$\nu_1$}};
  \fill (0.75,0.43) circle (0.05cm);
  \node [right] at (0.75,0.43) {{$\nu_2$}};
  \fill (0.5,0.29) circle (0.05cm);
 \end{tikzpicture}
 }

\section{Introduction}


In the present paper we examine point configurations on the projective line up to translations. In general, let us consider $n$ distinct points on $\PP_1$. Then the open subset $U\subseteq \PP_1^n$ consisting of pairwise different coordinates
is the space of possible configurations. For an algebraic group $G$ acting on $\PP_1$ the question arises what the resulting equivalence classes of configurations are, i.e. we ask for a quotient $U/G$ of the diagonal action and a possible canonical compactification. 

In the case of the full automorphism group $G=\mathrm{SL}(2,\KK)$ this problem has been thoroughly studied. The space of configuration classes is canonically compactified by the famous Grothendieck-Knudsen moduli space $\overline M_{0,n}$, i.e. we have
\[
 M_{0,n}
 \ =\ 
 U\,/\,\mathrm{SL}(2,\KK)
 \ \subseteq\ 
 \overline M_{0,n}.
\]
Originally introduced as moduli space of certain marked curves Kapranov shows in \cite{chow_quotients_of_grassmannians} that $\overline M_{0,n}$ has (among others) the following two equivalent descriptions. Firstly it arises as the GIT-limit of $\PP_1^n$ with respect to the $G$-action, i.e. the limit of the inverse system of Mumford quotients. Secondly, it can be viewed as the blow-up of $\PP_{n-3}$ in $n-1$ general points and all the linear subspaces of dimension at most $n-5$ spanned by them.

Later this setting has been studied in the case where the full automorphism group was replaced by its maximal torus $\KK^*\subseteq\mathrm{Sl}(2,\KK)$. Similarly, it turns out that the Losev-Manin moduli space $\overline L_n$ coincides with the the GIT-limit, which in this case is the toric variety associated to the permutahedron. Again, the GIT-limit arises in a sequence of (toric) blow-ups from projective space, see \cite{discriminants, quotients_of_toric_varieties, losev_manin}.

In this paper we treat point configurations on $\PP_1$ up to the action of the maximal connected unipotent subgroup $\mathbb G_a\subseteq\mathrm{SL}(2,\KK)$. It consists of upper triangular matrices with diagonal elements equal to $1_\KK$ and can be thought of as group of translations. Since $\mathbb G_a$ is not reductive, we are faced with the additional problem of first finding to suitable replacement for the GIT-limit, i.e. assigning a canonical quotient to this action. This will be overcome in the following manner.

Doran and Kirwan introduce in \cite{towards_non_reductive_git} the notion of finitely generated semistable points admitting so-called enveloped quotients. Moreover, in \cite{non_reductive_gelfand} Arzhantsev, Hausen and Celik propose a Gelfand-MacPherson type construction which allows to apply methods from reductive GIT to obtain these enveloped quotients. Building on this work we obtain again an inverse system and the corresponding GIT-limit. In general the enveloped quotients are not projective, hence one cannot expect the GIT-limit to be so.

We then show that (up to nomalisation) the limit quotient, i.e. a canonical component of the GIT-limit, is canonically compactified by an iterated blow-up of $\PP_1^{n-1}$. To make this a little more precise consider a subset $A\subseteq \{2,\ldots, n\}$. Denoting by $T_2,S_2,\ldots, T_n,S_n$ the homogeneous coordinates on $\PP_1^{n-1}$ we associate to $A$ a subscheme $X_A$ on $\PP_1^{n-1}$ given by the ideal
\[
 \big\langle \,T_i^2,\ T_jS_k-T_kS_j;\;i,j,k\in A,\;j<k\,\big\rangle.
\]
The scheme-theoretic inclusions give rise to a partial order of these subschemes. Let $\mathrm{Bl}(\PP_1^{n-1})$ denote the blow-up of $\PP_1^{n-1}$ in all these subschemes in non-descending order.
\begin{introtheorem}
If $\PP_1\nlimquot \mathbb G_a$ and $\tilde{\mathrm{Bl}}(\PP_1^{n-1})$ denote the normalisations of the limit quotient and the above blow-up of $\PP_1^{n-1}$ respectively, then we have open embeddings
\[
 U/\mathbb G_a
 \quad\subseteq\quad
 \PP_1^n\nlimquot \mathbb G_a
 \quad\subseteq\quad
 \tilde{\mathrm{Bl}}(\PP_1^{n-1}).
\]
\end{introtheorem}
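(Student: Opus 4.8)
The plan is to treat the two embeddings separately and to obtain the non-trivial right-hand one by realising $\tilde{\mathrm{Bl}}(\PP_1^{n-1})$ as a simultaneous resolution of the birational maps relating the members of the inverse system that defines the limit quotient.

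For the left embedding, recall that $U$ (more precisely, its properly stable sublocus, on which $\mathbb G_a$ acts freely with closed orbits) admits a geometric quotient which, by the formalism of enveloped quotients of \cite{towards_non_reductive_git,non_reductive_gelfand} developed above, appears as an open dense subvariety of the GIT-limit and hence of its canonical component $\PP_1^n\nlimquot\mathbb G_a$; this is the first embedding. It is convenient to record an explicit model: writing $p_j=[x_j:y_j]$, the bracket sections $x_jy_1-x_1y_j$ are $\mathbb G_a$-invariant, so $p\mapsto\big([\,x_jy_1-x_1y_j : y_1y_j\,]\big)_{j=2}^n$ is a $\mathbb G_a$-invariant rational map from $\PP_1^n$ to $\PP_1^{n-1}$, and on the stable locus it induces an isomorphism of $U/\mathbb G_a$ onto the complement in $\PP_1^{n-1}$ of the divisors $\{T_j=0\}$, $\{S_j=0\}$ and $\{T_jS_k-T_kS_j=0\}$. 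In particular this exhibits $\PP_1^{n-1}$, up to normalisation, as the enveloped quotient of $\PP_1^n$ for the extremal linearisation carrying all the weight on $p_1$, so $\PP_1^{n-1}$ is a member of the inverse system and the limit quotient comes with a canonical birational morphism $q\colon\PP_1^n\nlimquot\mathbb G_a\to\PP_1^{n-1}$ which is the identity over $U/\mathbb G_a$.

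Next I would lift $q$ through the iterated blow-up. By the universal property of blowing up it is enough to show that for every $A\subseteq\{2,\dots,n\}$ the ideal sheaf of $X_A$ pulls back to an invertible sheaf on $\PP_1^n\nlimquot\mathbb G_a$: then $q$ factors through $\mathrm{Bl}(\PP_1^{n-1})$, the intermediate blow-ups along already-invertible ideals being isomorphisms, and, the source being normal, further through $\tilde{\mathrm{Bl}}(\PP_1^{n-1})$. To check this Cartier condition one uses the description of the (enveloped) quotients of $\PP_1^n$ obtained from the Gelfand--MacPherson reduction to a torus action: its wall-and-chamber decomposition is matched combinatorially with the poset of the subschemes $X_A$ in such a way that for each $A$ there is a chamber whose quotient $Q_A$ is an open subset of the blow-up of $\PP_1^{n-1}$ along $X_A$; on $Q_A$ the ideal of $X_A$ is invertible, and since $\PP_1^n\nlimquot\mathbb G_a$ maps to $Q_A$ compatibly with $q$, so is its pullback to the limit quotient. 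The same matching explains why $\mathrm{Bl}(\PP_1^{n-1})$ is independent of the chosen refinement of the partial order, the strict transforms of centres of equal height being transverse coordinate subvarieties once the smaller centres have been blown up.

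Finally, the morphism $f\colon\PP_1^n\nlimquot\mathbb G_a\to\tilde{\mathrm{Bl}}(\PP_1^{n-1})$ obtained in this way is birational (it is the identity over the dense open $U/\mathbb G_a$) and has normal target, so by Zariski's main theorem it is an open immersion as soon as it is quasi-finite, i.e. injective on closed points. Since $f$ lies over $q$, and $q$ is an isomorphism away from the collision loci $\{T_j=0\}$ which contain all the centres $X_A$, injectivity need only be verified over the $X_A$; there the fibre of $\tilde{\mathrm{Bl}}$ records which subsets of marked points collide with $p_1$ and at which relative rates, which is precisely the invariant separating the corresponding boundary strata of the limit quotient as seen through the various Mumford quotients, so no two points are identified. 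I expect the main obstacle to lie in the combinatorial core of the argument: the precise dictionary between the GIT chambers of the $\mathbb G_a$-action and the poset of the subschemes $X_A$, together with the control of the boundary stratification of the (in general non-projective) limit quotient that the injectivity step demands.
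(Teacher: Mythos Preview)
Your outline diverges substantially from the paper's proof, and the divergence is located precisely at the step you flag as the ``combinatorial core''. The paper does not argue via the universal property of blowing up. Instead it realises both the normalised limit quotient and the iterated blow-up as closures of $(\overline Y\cap T)/H$ inside toric varieties: the former in the toric variety of the GKZ fan $\Sigma=\mathrm{GKZ}(P)$, the latter in the toric variety of the iterated stellar subdivision $\Sigma_r$ of the fan $\Sigma_1$ carrying the first Mumford quotient $X_1\subseteq Z_1$. The comparison is then purely combinatorial: one shows that the tropically relevant part $\Sigma^\Delta$ of $\Sigma$ is a subfan of $\Sigma_r$, using the Feichtner--Kozlov style criterion of Section~\ref{sec:combinatorics} for cones in a non--building-set iterated subdivision. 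From this, Corollary~\ref{cor:gkz_is_nested} yields a common normalisation, and the non-reductive statement follows by removing the locus $E$ of Proposition~\ref{pro:enveloped_quotient}.

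Your proposed route has two genuine gaps. First, the claim that ``for each $A$ there is a chamber whose quotient $Q_A$ is an open subset of the blow-up of $\PP_1^{n-1}$ along $X_A$'' is not what holds: the dictionary established in the paper is between the subsets $A\subseteq N_2$ and the \emph{walls} $\mathcal H_R$ (equivalently the rays $\nu_R$), not the chambers; a single chamber will in general see several of the $X_A$ simultaneously, so you cannot isolate one invertible pullback at a time in the way you suggest. Second, the order-independence and the behaviour of strict transforms in the iterated blow-up is exactly the delicate point: the centres $\sigma_R$ do \emph{not} form a building set, so the naive expectation that ``strict transforms of centres of equal height are transverse coordinate subvarieties'' fails in general, and this is precisely why the paper develops Theorem~\ref{thm_main_general} and Proposition~\ref{pro_main_fan}. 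Without that input your factorisation through $\mathrm{Bl}(\PP_1^{n-1})$ is unjustified. The final quasi-finiteness step via Zariski's Main Theorem would also need a concrete description of the boundary strata of the limit quotient, which again comes down to the fan comparison the paper carries out.
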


In the case of two distinct points, i.e. $n=2$, the latter space is simply $\PP_1$. If $n=3$ holds, then the compatification $\tilde{\mathrm{Bl}}(\PP_1\times\PP_1)$ is the unique non-toric, Gorenstein, log del Pezzo $\KK^*$-surface of Picard number $3$ with a singularity of type $A_1$. Similar to $\overline M_{0,5}$ which arises as a single Mumford quotient of the cone over the Grassmannian $\mathrm{Gr}(2,5)$, this surface is the Mumford quotient of the cone over the Grassmannian $\mathrm{Gr}(2,4)$. For higher $n$ an analogous Mumford quotient needs to be blown up as will be described in Section~\ref{sec:iterated_blow-up}.

The paper is organized as follows. In Section~\ref{sec:nr_limit} we recall the results of \cite{non_reductive_gelfand} and introduce the non-reductive GIT-limit and limit quotient. In the following Section~\ref{sec:N0m} we apply these constructions to the action of $\mathbb G_a$ on $\PP_1^n$. We discuss explicitly the GIT-fan which contains the combinatorial data needed to make the limit quotient accessible.
The blow-ups of $\PP_1^{n-1}$ will be dealt with in a mostly combinatorial way, i.e. as proper transforms with respect to toric blow-ups. For this we prove a result on combinatorial blow-ups in the spirit of Feichtner and Kozlov, see \cite{incidence_combinatorics}. This will be carried out in Section~\ref{sec:combinatorics}. The final Section~\ref{sec:iterated_blow-up} then is dedicated to the proof of the main theorems.
\tableofcontents

\section{The non-reductive GIT-limit}
\label{sec:nr_limit}
In this section we deal with the problem of assigning a canonical quotient to the action of a unipotent group $G$ on a Mori Dream Space $X$, i.e. a $\QQ$-factorial, projective variety with finitely generated Cox ring $\mathcal R(X)$. For reductive groups an answer to this problem is the GIT-limit, i.e. the limit of the inverse system consisting of the Mumford quotients $X^{\mathrm{ss}}(D)\gq G$. However, this method relies on Hilbert's Finiteness Theorem which guarantees, that for a linear action of a reductive group $G$ on any affine algebra the invariant algebra is affine again. So we make a further finiteness assumption on certain $G$-invariants which for example holds when $G=\mathbb{G}_a$.

In \cite[Definition~4.2.6]{towards_non_reductive_git} Doran and Kirwan introduce the notion of {\it finitely generated semistable sets} for the action of a unipotent group, namely the sets $X_{\mathrm{fg}}^{\mathrm{ss}}(D):=\bigcup X_f$ where $D$ is some ample divisor, $f\in \mathcal O_{n\!D}(X)^G$ is an invariant section for some $n>0$ and $\mathcal O(X_f)^G$ is finitely generated. These sets possess {\it enveloped quotients }
\[
 r\colon X^{\mathrm{ss}}_\mathrm{fg}(D)\ \to \ r\,(X^\mathrm{ss}_{\mathrm {fg}}(D))\subseteq X\gq_{\!D} G
\]
where the {\it enveloping quotient} $X\gq_{\!D} G$ can be obtained by gluing together the affine varieties $\Spec(\mathcal O(X_f)^G)$. 
Using a Gelfand-MacPherson type correspondence described in \cite{non_reductive_gelfand} we now turn this collection of enveloped quotients into an inverse system.


Consider the action of an affine-algebraic, simply connected group $G$ with trivial character group $\mathbb X(G)$ on the normal, projective variety $X$. Let $K\subseteq\mathrm{WDiv}(X)$ be a free and finitely generated group of Weil divisors mapping isomorphically onto the divisor class group $\mathrm{Cl}(X)$. We then associate to $X$ a sheaf of graded algebras
\[
 \mathcal R\;\;\;:=\bigoplus_{D\in K}\mathcal O_D.
\]
We suppose that the algebra of global sections $\mathcal R(X)$, i.e. the Cox ring of $X$, is finitely generated. The $K$-grading yields an action of the torus $H:=\Spec(\KK[K])$ on the relative spectrum $\hat X:=\mathrm{Spec}_X(\mathcal R)$ and the canonical morphism $p\colon\hat X\to X$ is a good quotient for this action. By linearisation the $G$-action on $X$ lifts to a unique action of $G$ on the total coordinate space $\overline{X}:=\Spec(\mathcal R(X))$ which commutes with the $H$-action and turns $p$ into an equivariant morphism, see \cite[Section~1]{git_based_on_weil_divisors}.

Now suppose that the algebra of invariants $\mathcal R(X)^G$ is finitely generated as well and let $\overline{Y}$ be its spectrum. The inclusion of the invariants gives rise to a morphism $\kappa\colon \overline{X}\to\overline{Y}$. Since $\kappa$ is not necessarily surjective, it need not have the universal property of quotients. However, passing to the category of constructible spaces we obtain a categorical quotient $\kappa\colon \overline X\to \overline Y':=\kappa(\overline X)$, see \cite{non_reductive_gelfand} for details.

For every ample $D\in K$ standard geometric invariant theory provides us with a set of semistable points 
\[\overline Y^{\mathrm {ss}}(D):=\bigcup\; \overline{Y}_f
 \quad \text{where} \quad
 f\in\mathcal R(X)_{nD}^G
 \quad \text{and}\quad
 n>0. 
\]
These sets admit good quotients for the $H$-action which are isomorphic to the enveloping quotient $X\gq_{\!D}G$ in the sense of Doran and Kirwan. The set of finitely generated semistable points $X^{\mathrm {ss}}_{\mathrm{fg}}(D)$ can be retrieved from $\overline Y^{\mathrm {ss}}(D)$ by
\[
 X^{\mathrm {ss}}_{\mathrm{fg}}(D)=p(\hat U)\qquad\text{where}\qquad \hat U:=\kappa^{-1}(\overline Y^{\mathrm {ss}}(D)).
\]
The situation fits into the following commutative diagram:
\[
\xymatrix{
\overline{X}
\ar@/^2pc/[rrrrr]!/u 1.4pc/^{\kappa}
&
\hat{X}
\ar@{}[l]|{\supseteq}
\ar[d]^p
&
\hat{U}
\ar@{}[l]|{\supseteq}
\ar[rr]^{\kappa}
\ar[d]^p
&
&
{\overline{Y}^{\mathrm{ss}}(D)\cap\overline Y'}
\ar@{}[r]!/u 1.3pc/|-{\subrot{22}}
\ar@{}[r]!/d 1pc/|-{\subrot{-22}}
\ar[dd]^{q}
&
{\begin{array}{c}
    {\overline Y'}
    \\
    \\
    {\overline{Y}^{\mathrm{ss}}(D)}
 \end{array}
}
\ar[dd]^{q'}
&
{\overline Y}
\ar@{}[l]!/u 1.3pc/|-{\subrot{-22}}
\ar@{}[l]!/d 1pc/|-{\subrot{22}}
\\
&
X
&
{X^{\mathrm{ss}}_{\mathrm{fg}}(D)}
\ar@{}[l]|{\supseteq}
\ar[d]^r
\\
&&V
&&
V
\ar@{=}[ll]
\ar@{}[r]|{\subseteq}
&
{
\begin{array}{c}
X\gq_{\!D}G\ =
\\
\overline Y^{\mathrm{ss}}(D)\gq H
\end{array}
}
}
\]
In this setting \cite[Corollary~5.3]{non_reductive_gelfand} answers the question whether the morphisms $q$ and $r$ are categorical quotients.
\begin{proposition}
\label{pro:true_quotients}
 If for every $v\in V$ the closed $H$-orbit lying in $q'^{-1}(v)$ is contained in $\overline Y'$ (e.g. $q'$ is geometric), then $q$ and $r$ are categorical quotients for the $H$- and $G$-actions respectively.
\end{proposition}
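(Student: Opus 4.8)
The plan is to reduce the assertion to the corresponding statement for the reductive $H$-action, which is governed by ordinary GIT, and then to transport the resulting quotient property across the Gelfand--MacPherson correspondence. First I would recall that for the reductive torus $H$ acting on the affine variety $\overline Y^{\mathrm{ss}}(D)$, the morphism $q'\colon \overline Y^{\mathrm{ss}}(D)\to V$ is a good quotient and hence a categorical quotient in the category of varieties; moreover every fibre $q'^{-1}(v)$ contains a unique closed $H$-orbit. The hypothesis is precisely that each such closed orbit already lies in the constructible set $\overline Y'=\kappa(\overline X)$. Since $\overline Y^{\mathrm{ss}}(D)\cap\overline Y'$ is $H$-invariant, $q$ is obtained from $q'$ by restriction to this constructible subset, and $V$ is the image; so the first step is to verify that a good quotient restricted to an invariant constructible subset whose fibrewise closed orbits survive is still a categorical quotient in the category of constructible spaces. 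This is where the hypothesis does its work: without it the image of the restriction could fail to separate the orbits that got truncated.

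The key step is the following general principle: if $q'\colon Z'\to V$ is a good quotient of a reductive group and $Z\subseteq Z'$ is an invariant constructible subset such that every fibre of $q'$ meets $Z$ in a set containing the (unique) closed orbit of that fibre, then $q:=q'|_Z\colon Z\to q'(Z)$ is a categorical quotient in constructible spaces, and $q'(Z)=V$. I would prove this by checking the universal property directly: given any invariant morphism $\psi\colon Z\to W$ of constructible spaces, two points of $Z$ lying in the same $q$-fibre have closures (inside $Z'$) meeting the same closed orbit $O$; one extends $\psi$ to a neighbourhood, uses that $O\subseteq Z$ by hypothesis, and concludes $\psi$ is constant on $q$-fibres by a limit/specialisation argument, exactly as in the proof that good quotients are categorical. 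Then $\psi$ factors set-theoretically through $q$, and the factorisation is a morphism of constructible spaces because $q$ is the restriction of the good (hence submersive, in the constructible topology) map $q'$. This yields that $q$ is a categorical quotient for the $H$-action.

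For the morphism $r$ and the $G$-action I would use the commutativity of the big diagram together with the fact, established in the setup, that $\kappa\colon \hat U\to \overline Y^{\mathrm{ss}}(D)\cap\overline Y'$ is a categorical quotient for the $H$-action on $\hat U$ (this is [non\_reductive\_gelfand, Corollary~5.3], or follows from the same restriction argument applied one level up), and that $p\colon\hat U\to X^{\mathrm{ss}}_{\mathrm{fg}}(D)$ is a good quotient for the residual $H$-action while $r$ is induced on orbit spaces. Composing, $q\circ\kappa = r\circ p$, and since $\kappa$ and $p$ are categorical quotients for $H$ on $\hat U$, the two induced maps $X^{\mathrm{ss}}_{\mathrm{fg}}(D)\to V$ obtained from $q\circ\kappa$ must agree with $r$; the universal property of $r$ as a $G$-quotient then follows formally from the universal property of $q$ as an $H$-quotient together with the identification of $G$-invariant morphisms out of $X^{\mathrm{ss}}_{\mathrm{fg}}(D)$ with $H$-invariant morphisms out of $\hat U$ that factor through $p$. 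I expect the main obstacle to be the constructible-category bookkeeping in the first principle above: one must be careful that ``categorical quotient'' is taken in the right category and that restriction of a good quotient to a constructible subset behaves well enough (e.g. that images of constructible sets are constructible, Chevalley, and that the quotient topology is inherited), since the enveloping quotient need not be a variety. The parenthetical case ``$q'$ geometric'' is immediate, because then every fibre is a single closed orbit, which automatically lies in $\overline Y'$ as soon as it meets it, and $\overline Y'$ meets every fibre since $\kappa$ is dominant onto $\overline Y'$ and $V$ is covered.
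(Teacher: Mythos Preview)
The paper does not give its own proof of this proposition: it is stated as a direct citation of \cite[Corollary~5.3]{non_reductive_gelfand}, with the sentence immediately preceding the statement saying exactly that. So there is nothing to compare your argument against within this paper.

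That said, your outline is essentially the argument one expects, and matches the shape of what is done in the cited reference: first show that restricting a good quotient $q'$ to an invariant constructible subset containing all fibrewise closed orbits still yields a categorical quotient in the category of constructible spaces, then push this across the diagram to get the statement for $r$. Your identification of the constructible-category bookkeeping as the place where real care is needed is accurate.

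One slip to fix: in your second paragraph on $r$, you write that $\kappa\colon \hat U\to \overline Y^{\mathrm{ss}}(D)\cap\overline Y'$ is a categorical quotient for the \emph{$H$-action}, and later that ``$\kappa$ and $p$ are categorical quotients for $H$ on $\hat U$''. This is not right: $\kappa$ is the categorical quotient for the \emph{$G$-action} (in constructible spaces), while $p$ is the good quotient for the $H$-action. The transfer argument works precisely because these two commuting actions are quotiented in sequence, so that $G$-invariant morphisms out of $X^{\mathrm{ss}}_{\mathrm{fg}}(D)$ correspond to $(G\times H)$-invariant morphisms out of $\hat U$, which in turn correspond to $H$-invariant morphisms out of $\overline Y^{\mathrm{ss}}(D)\cap\overline Y'$. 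With that correction the logic goes through.
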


In order to define a canonical quotient for the action of $G$ on $X$ we first recall the respective methods in reductive geometric invariant theory. For the affine variety $\overline Y$ let $\overline Y_1,\ldots, \overline Y_r$ be the sets of semistable points arising from ample divisors. Whenever we have $\overline Y_i\subseteq\overline Y_j$ for two of these set we obtain a commutative diagram.
\[
 \xymatrix{
 {\overline Y_i}
 \ar[r]
 \ar[d]
 &
 {\overline Y_j}
 \ar[d]
 \\
 {\overline Y_i\gq H}
 \ar[r]^{\varphi_{ij}}
 &
 {\overline Y_j\gq H}
 }
\]
The morphisms $\varphi_{ij}\colon \overline Y_i\gq H\to\overline Y_j\gq H$ turn the collection of quotients into an inverse system, the {\it GIT-system}. Its inverse limit $\overline Y\gitlim H$ is called {\it GIT-limit}. There exists a canonical morphism
\[
 \bigcap \overline Y_i\quad \to\quad \overline Y\gitlim H
\]
and the closure of its image is the {\it limit quotient} $\overline Y\limquot H$ of $\overline Y$ with respect to $H$. Note that in the literature this space is also called 'canonical component' or 'GIT-limit'. In general, the limit quotient need not be normal; its normalisation is the {\it normalised limit quotient} $\overline Y\nlimquot H$.

We now turn to the non-reductive case. As constructible subsets of $\overline Y_i\gq H$ the corresponding enveloped quotients $V_i$ inherit the above morphisms $\varphi_{ij}$, and again form an inverse system. 

\begin{definition}
The {\it (non-reductive) GIT-limit} $X\gitlim G$ of $X$ with respect to the $G$-action is the limit of the inverse system of enveloped quotients. 
\end{definition}

The non-reductive GIT-limit $X\gitlim G$ is a constructible subset of the reductive GIT-limit $\overline Y\gitlim H$. Analogously, we obtain a canonical morphism into the (non-reductive) GIT-limit $X\gitlim G$
\[
 \bigcap\, (\overline Y'\cap \overline Y_i) \quad \to\quad X\gitlim G.
\]

\begin{definition}
 The {\it (non-reductive) limit quotient} $X\limquot G$ of $X$ with respect to the $G$-action is the closure of the image of the above morphism. Its normalisation is the {\it normalised limit quotient} $X\nlimquot G$.
\end{definition}

The limit quotient in general appears to be relatively hard to access. However, if $\overline Y$ is factorial we can realise it up to normalisation as a certain closed subset of a toric variety as follows. For this consider homogeneous generators $f_1,\ldots, f_r$ of the $K$-graded algebra $\mathcal O(\overline Y)$. With $\deg(T_i):=\deg (f_i)$ we obtain a graded epimorphism
\[
 \KK[T_1,\ldots, T_r]\ \to\ \mathcal O(\overline Y);\qquad T_i\ \mapsto \ f_i.
\]
This gives rise to an equivariant closed embedding of $\overline Y$ into $\KK^r$. We denote by $Q$ the the matrix recording the weights $\deg(f_i)$ as columns and fix a Gale dual matrix $P$, i.e. a matrix with $PQ^t=0$. The {\it Gelfand-Kapranov-Zelevinsky-decomposition (GKZ-decomposition)} of $P$ is the fan 
\[
 \Sigma\ :=\ \{\sigma(v);~v\in\QQ^{r-\mathrm{rk}(K)}\},\qquad
 \sigma(v)\ :=\ \bigcap_{v\in\tau^\circ}\tau
\]
where $\tau$ is a cone generated by some of the columns of $P$. It is known that the normalised limit quotient $\KK^r\nlimquot H$ is a toric variety with corresponding fan $\Sigma$. Now suppose that $\overline Y$ is factorial. Then every set of semistable points of $\overline Y$ arises as intersection of $\overline Y$ with a set of semistable points on $\KK^r$. In this situation we obtain a closed embedding of the GIT-limits $\overline Y\gitlim H\to\KK^r\gitlim H$ and hence of the respective limit quotients. The inverse image of $\overline Y\limquot H$ under the normalisation map $\nu\colon \KK^r\nlimquot H\to\KK^r\limquot H$ is in general not normal. However, its normalisation conincides with the normalised limit quotient $\overline Y\nlimquot H$. The situation fits into the following commutative diagram.
\[
 \xymatrix{
{\overline Y\nlimquot H}
 \ar[r]
 \ar[rd]
 &
 {\nu^{-1}(\overline Y\limquot H)}
 \ar[r]
 \ar[d]^\nu
 &
 {\KK^r\nlimquot H}
 \ar[d]^\nu
 \\
  &
 {\overline Y\limquot H}
 \ar[r]
 &
 {\KK^r\limquot H}
 }
\]
Finally, if $T$ is the dense torus in $\KK^r$, then $\nu^{-1}(\overline Y\limquot H)$ coincides with the closure of $(\overline Y\cap T)/H$ in $\KK^r\nlimquot H$. Hence we obtain a normalisation map 
\[
 \overline Y\nlimquot H\ \to\ \overline{\left((\overline Y\cap T)\,/\,H\right)^{\Sigma}}.
\]

\section{Point configurations on \texorpdfstring{$\PP_1$}{P1} and translations}
\label{sec:N0m}
In this section we examine point configurations on $\PP_1^n$ up to translations. For this we consider the diagonal action of $\mathbb G_a$ on $\PP_1^n$ and explicitly perform the Gelfand-MacPherson type construction introduced in the preceding section. We determine the GIT-fan describing the variation of quotients and show that it is closely related to the well known GIT-fan stemming from the action of the full automorphism group $\mathrm{SL}(2,\KK)$ on $\PP_1^n$.

For this we consider the unipotent group
\[\mathbb{G}_a=\left\{\left(\begin{array}{cc}1&k\\0&1\end{array}\right);~k\in\KK\right\}\subseteq\mathrm{SL}(2,\KK),\]
and its action on $\overline X:=(\KK^n)^2$ given by
\[
 A\cdot \left[\begin{array}{ccc}
               x_1&\ldots&x_n\\
               y_1&\ldots&y_n
              \end{array}\right]
\ :=\ 
\left[A\left(\begin{array}{cc}x_1\\y_1\end{array}\right),\ldots,A\left(\begin{array}{cc}x_n\\y_n\end{array}\right)\right].
\]
Viewing $[x_i,y_i]$ as homogeneous coordinates of the factors in $\PP_1^n$ this gives rise to an induced action on $X:=\PP_1^n$. Note that the Cox ring of $X$ is
\[\mathcal R(X)=\mathcal O(\overline X)=\KK[T_1,\ldots, T_n,S_1,\ldots,S_n]\]
together with a $\mathrm{Cl}(X)$-grading defined by $\deg(T_i)=\deg (S_i)=e_i\in\ZZ^n=\mathrm{Cl}(X)$. A first Propositions concerns the algebra of invariants in $\mathcal R(X)$ and its spectrum.

\begin{proposition}
\label{pro:invariants}
Consider the above $\mathbb{G}_a$-action on $\overline X$.
\begin{enumerate}
 \item The subalgebra $\mathcal O(\overline X)^{\mathbb{G}_a}\subseteq\mathcal O(\overline X)$ is generated by 
\[
  S_1,\ldots, S_n, \qquad T_jS_k-T_kS_j \text{, with } 1\le j<k\le n.
 \]

\item The canonical morphism $\kappa'\colon \overline X\to\overline Y$ where $\overline Y:=\mathrm{Spec}(\mathcal O(\overline X)^{\mathbb{G}_a})$ fits into a commutative diagram
\[
 \xymatrix{
 {\overline X}
 \ar[rrrr]^{\kappa\colon (x,y)\ \mapsto\ (1,x)\,\wedge\,(0,y)}
 \ar[rrd]_{\kappa'}
 &&&&
 {\bigwedge\nolimits^2\KK^{n+1}}
 \\
 &&
 {\overline Y}
 \ar[rru]_{\iota}
 }
\]
where $\iota$ is a closed embedding and its image $\iota(\overline Y)$ is the affine cone over the Grassmannian $\mathrm{Gr}(2,n+1)$. Its vanishing ideal is generated by the Pl\"ucker relations
\[
 T_{ij}T_{kl}-T_{ik}T_{jl}+T_{il}T_{jk},\qquad\text{with}\qquad 0\le i<j<k<l\le n,
\]
where $T_{ij}=(e_i\wedge e_j)^*$ are the dual basis vectors of the standard basis.
\end{enumerate}
\end{proposition}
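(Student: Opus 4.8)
To prove part (i), I would compute the ring of $\mathbb{G}_a$-invariants directly. Writing the action of $A = \begin{pmatrix} 1 & k \\ 0 & 1\end{pmatrix}$ on coordinates, we get $T_i \mapsto T_i + k S_i$ and $S_i \mapsto S_i$. So $\mathcal{O}(\overline{X}) = \KK[T_1,\dots,T_n,S_1,\dots,S_n]$ carries a linear $\mathbb{G}_a$-action fixing the $S_i$ and translating each $T_i$ by $k S_i$. The claim is that the invariant subalgebra is generated by the $S_i$ and the $2\times 2$ minors $T_j S_k - T_k S_j$. First I would check that these are indeed invariant: $S_i$ obviously, and $(T_j + kS_j)S_k - (T_k + kS_k)S_j = T_j S_k - T_k S_j$. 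For the reverse inclusion — that these generate everything — the cleanest route is the standard one: localize at $S_n$ (say). Over the open set $S_n \neq 0$, the element $u := T_n/S_n$ transforms as $u \mapsto u + k$, so it trivializes the $\mathbb{G}_a$-torsor; every invariant in the localization $\mathcal{O}(\overline{X})_{S_n}$ can be written in terms of $S_1,\dots,S_n$, $S_n^{-1}$, and the differences $T_i/S_n - u = (T_i S_n - T_n S_i)/S_n^2$... wait, one must be careful: $T_i$ itself isn't invariant, but $T_i - u S_i = (T_i S_n - T_n S_i)/S_n$ is. So $\mathcal{O}(\overline X)_{S_n}^{\mathbb{G}_a} = \KK[S_1,\dots,S_n,S_n^{-1}][T_i S_n - T_n S_i : i < n]$, which lies in the localization of the proposed subalgebra. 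Then a standard argument (an invariant polynomial has its "denominators" controlled) recovers the non-localized statement; the only subtlety is checking no spurious powers of $S_n$ appear, which one handles by symmetry or by a direct degree/filtration argument. I expect part (i) to be essentially routine but to require this localization-and-descent care.

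For part (ii), the plan is to exhibit the map $\kappa\colon (x,y) \mapsto (1,x)\wedge(0,y) \in \bigwedge^2 \KK^{n+1}$ explicitly in coordinates and read off its image. Here $(1,x) = e_0 + \sum_{i=1}^n x_i e_i$ and $(0,y) = \sum_{i=1}^n y_i e_i$, so
\[
 (1,x)\wedge(0,y) \;=\; \sum_{i=1}^n y_i\, (e_0\wedge e_i) \;+\; \sum_{1\le i<j\le n} (x_i y_j - x_j y_i)\,(e_i\wedge e_j).
\]
Thus in Plücker coordinates $T_{0i} = y_i = S_i$ and $T_{ij} = x_i y_j - x_j y_i = T_i S_j - T_j S_i$ (up to the sign convention of part (i)). This shows $\kappa$ factors through $\kappa'$ via a closed embedding $\iota$ precisely because the coordinate functions of $\kappa$ are exactly the generators of $\mathcal{O}(\overline X)^{\mathbb{G}_a}$ produced in part (i) — i.e. $\kappa^* (e_i\wedge e_j)^*$ generate the invariant ring, so $\kappa'$ is the Stein-type factorization and $\iota$ is closed with $\iota(\overline Y) = \overline{\mathrm{image}\,\kappa}$.

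It remains to identify $\iota(\overline{Y})$ with the affine cone over $\mathrm{Gr}(2,n+1)$ cut out by the Plücker relations. One inclusion is clear: $(1,x)\wedge(0,y)$ is decomposable, hence lies in the affine cone over the Grassmannian, so the Plücker relations vanish on $\iota(\overline Y)$ and the Plücker ideal is contained in $I(\iota(\overline Y))$. For the reverse — that $\iota(\overline{Y})$ is \emph{all} of the cone — I would argue that a generic decomposable bivector $v\wedge w$ with $v,w\in\KK^{n+1}$ can be brought, by scaling and by $\mathrm{GL}$-action on the span, into the form $(1,x)\wedge(0,y)$ (the $2$-plane they span meets the affine chart $\{$first coordinate $\neq 0\}$ appropriately), so the image is dense in the cone; since $\overline{Y}$ is affine and $\iota$ is a closed embedding, a dense closed subset of the irreducible cone is the whole cone. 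Alternatively, and more cleanly, one compares vanishing ideals: the Plücker ideal is prime of the right dimension ($2(n+1) - 3 - 1 = 2n - 2$, matching $\dim \overline X - \dim \mathbb{G}_a = 2n - 1$... here one must double-check the count — $\dim \overline Y = \dim\overline X - 1 = 2n-1$ versus $\dim$(affine cone over $\mathrm{Gr}(2,n+1)$) $= 2(n-1)+1 = 2n-1$, so they agree), and a surjection of coordinate rings between integral domains of equal finite Krull dimension, one of which is the domain $\KK[\mathrm{cone}]$, forces equality. The main obstacle is this last identification — making rigorous that the Plücker ideal is exactly the vanishing ideal of the image rather than merely contained in it; the dimension-count argument is the safest path, and I would present it that way.
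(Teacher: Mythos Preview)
Your proposal is correct. The paper takes a more economical route than you do in part (i): it simply cites Shmelkin's first fundamental theorem \cite{shmelkins_invariants} for the generators of $\mathcal{O}(\overline X)^{\mathbb{G}_a}$, whereas you propose a direct localization/slice argument (trivialize the $\mathbb{G}_a$-torsor on $\{S_n\neq 0\}$ via $u=T_n/S_n$, then descend). Your approach is more self-contained, but the descent step --- passing from $\mathcal{O}(\overline X)_{S_n}^{\mathbb{G}_a}\subseteq R_{S_n}$ back to $\mathcal{O}(\overline X)^{\mathbb{G}_a}\subseteq R$ --- is not entirely formal; the symmetry argument you sketch (intersecting over all $S_i$-localizations) does not obviously yield $R$ unless you know something about $R$ in advance, so you will want to carry out the degree/filtration argument you allude to, or invoke the van den Essen--type slice theorem for locally nilpotent derivations.

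For part (ii) your strategy coincides with the paper's: define $\iota$ via the comorphism sending $T_{0i}\mapsto S_i$, $T_{jk}\mapsto T_jS_k-T_kS_j$, observe surjectivity of $\iota^*$ so that $\iota$ is a closed embedding, check that the Pl\"ucker relations pull back to zero so that $\iota(\overline Y)$ lies in the cone over $\mathrm{Gr}(2,n+1)$, and then conclude equality by a dimension count. The only difference is that you assert $\dim\overline Y=2n-1$ directly, while the paper justifies the same number by showing that over the open locus of points with all coordinates nonzero the invariants separate orbits, so that the fibres of $\kappa'$ are one-dimensional there. Your claim $\dim\overline Y=\dim\overline X-1$ is of course correct, but since $\mathbb{G}_a$ is not reductive it is worth saying why (e.g.\ generic freeness of the action on $\{S_n\neq 0\}$, or the kernel of a nonzero locally nilpotent derivation has transcendence degree one less); the paper's fibre argument is one concrete way to do exactly this.
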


\begin{proof}
The invariants have been described by Shmelkin, see \cite[Theorem 1.1]{shmelkins_invariants}. For (ii) we define $\iota$ by its comorphism 
\[
 \iota^*\colon T_{0i}\mapsto S_i,\quad T_{jk}\mapsto T_jS_k-T_kS_j
 \quad
 \text{where }
 1\le i\le n,\ 1\le j< k\le n.
\]
Clearly, $\iota^*$ is surjective, hence $\iota$ is an embedding. Moreover, the pullback of the Pl\"ucker relations with $\iota^*$ gives the zero ideal. Thus $\overline Y$ lies in the affine cone $C(\mathrm{Gr}(2,n+1))$. It now suffices to show that $\mathrm{Im}(\kappa')$ has dimension $2n-1$.

For this consider two points $(x,y),\,(x',y')$ with only non-zero coefficients. If they have distinct orbits, then the orbits are separated by the invariants: If $y\not= y'$ holds, then there exists a separating $S_i$. Otherwise we can choose a separating $T_iS_j-T_jS_i$. Hence, over an open set the fibres of $\kappa'$ are one-dimensional and thus the image of $\kappa'$ is $(2n-1)$-dimensional.
\end{proof}

While for reductive groups the quotient morphism $\kappa'$ is surjective, this fails in general. We provide a description of the image of 
\[\kappa\colon\overline X=(\KK^n)^2\to\bigwedge\nolimits^2\KK^{n+1};\quad (x,y)\mapsto(1,x)\wedge (0,y).\]
Via the embedding of the preceeding proposition we view $\overline Y$ as subset of $\bigwedge^2\KK^{n+1}$. Observe that $\overline Y$ contains the affine cone $\overline Y^\star$ of the smaller Grassmannian $\mathrm{Gr}(2,n)$ in the following canonical manner:
\[\overline Y^\star=\{(0,x)\wedge (0,y);\ x,y\in\KK^n\}\subseteq\overline Y.\]

\begin{proposition}
\label{pro:Im_of_kappa}
 The image of $\kappa$ is $\kappa(\overline X)\,=\,(\overline Y\setminus \overline Y^\star)\cup\{0\}$ .
\end{proposition}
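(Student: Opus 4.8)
The plan is to show the two inclusions $\kappa(\overline X)\subseteq(\overline Y\setminus\overline Y^\star)\cup\{0\}$ and $(\overline Y\setminus\overline Y^\star)\cup\{0\}\subseteq\kappa(\overline X)$ separately, working throughout in the coordinates $T_{0i},T_{jk}$ on $\bigwedge^2\KK^{n+1}$ fixed in Proposition~\ref{pro:invariants}. Recall that under these coordinates $\kappa(x,y)=(1,x)\wedge(0,y)$ has Pl\"ucker coordinates $T_{0i}=y_i$ and $T_{jk}=x_jy_k-x_ky_j$, while $\overline Y^\star$ is exactly the locus where all the $T_{0i}$ vanish (since a vector $(0,x)\wedge(0,y)$ has zero $T_{0i}$-coordinates). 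For the first inclusion: a point in the image either has $y=0$, in which case $\kappa(x,0)=(1,x)\wedge 0=0$, or has some $y_i\neq 0$, in which case $T_{0i}=y_i\neq 0$ so the point does not lie in $\overline Y^\star$. That already gives $\kappa(\overline X)\subseteq(\overline Y\setminus\overline Y^\star)\cup\{0\}$.

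For the reverse inclusion, the point $0$ is in the image ($\kappa(0,0)=0$), so I may take a nonzero decomposable bivector $\omega=(a_0,\ldots,a_n)\wedge(b_0,\ldots,b_n)\in\overline Y$ not lying in $\overline Y^\star$, i.e.\ with some $T_{0i}(\omega)\neq 0$. The key observation is that $\overline Y$ was defined as the image of $\kappa'\colon\overline X\to\overline Y$ (equivalently, via $\iota$, as $\mathrm{Spec}$ of the invariants), so every element of $\overline Y$ can be written as $(1,x)\wedge(0,y)$ for \emph{some} $(x,y)$, \emph{except} that the scaling by $\mathbb{G}_m$ acting on the total coordinate space may have absorbed a factor. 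More carefully: an arbitrary decomposable $\omega\in\bigwedge^2\KK^{n+1}$ lying in $\overline Y\setminus\overline Y^\star$ spans a $2$-plane $P\subseteq\KK^{n+1}$ which, because some $T_{0i}(\omega)\neq 0$, is not contained in the hyperplane $\{v_0=0\}$; hence I can choose a basis of $P$ of the form $u=(1,x)$, $w=(0,y)$ (project one basis vector to have first coordinate $1$ after rescaling, subtract a multiple from the other to kill its first coordinate). Then $u\wedge w=\lambda\omega$ for some $\lambda\in\KK^*$, so $\omega=\kappa(\lambda^{-1}x,\lambda^{-1}y)$ up to the obvious bilinearity bookkeeping—here one must check the scalars land correctly, using that $\overline Y$ is a cone and stable under the $\mathbb{G}_m$-scaling coming from the grading, so $\lambda^{-1}\omega\in\overline Y$ as well and its $T_{0i}$-coordinate $\lambda^{-1}y_i$ is the relevant $y$-coordinate. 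This shows $\omega\in\kappa(\overline X)$.

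The main obstacle I expect is the scalar/normalisation bookkeeping in the reverse inclusion: a generic point of $\overline Y$ is decomposable but not obviously of the normal form $(1,x)\wedge(0,y)$, and one has to make sure the rescaling used to put it in that form stays inside $\overline Y$ and reproduces the same bivector, rather than only a scalar multiple of it. This is handled by exploiting that $\overline Y$ is the affine cone over $\mathrm{Gr}(2,n+1)$ (Proposition~\ref{pro:invariants}(ii)), hence closed under scaling, so the rescaling is harmless; and that the only way a decomposable bivector can fail to admit a representative $(1,\ast)\wedge(0,\ast)$ is for its $2$-plane to lie in $\{v_0=0\}$, which is precisely the condition defining $\overline Y^\star$. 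A secondary (but genuinely routine) point is to confirm $\overline Y^\star\cap\kappa(\overline X)=\{0\}$, i.e.\ that the nonzero part of $\overline Y^\star$ is really missed: if $\kappa(x,y)=(1,x)\wedge(0,y)$ had all $T_{0i}=y_i=0$ then $y=0$ and the bivector is $0$, as already noted. Assembling these gives the stated equality.
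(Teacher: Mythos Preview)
Your core argument is correct and considerably more direct than the paper's. You exploit the elementary fact that every nonzero element of the affine cone $\overline Y$ over $\mathrm{Gr}(2,n+1)$ is a decomposable bivector $\omega=a\wedge b$; if $\omega\notin\overline Y^\star$ then the $2$-plane spanned by $a,b$ is not contained in $\{v_0=0\}$, so one may choose a basis of the form $(1,x),(0,y)$ and read off a preimage under $\kappa$. The paper instead fixes $y:=(z_{01},\ldots,z_{0n})$ and shows that the linear map $\kappa(\,\cdot\,,y)\colon\KK^n\to W_y:=e_0\wedge y+\bigwedge\nolimits^2\KK^n$ surjects onto $W_y\cap\overline Y$ via a dimension count using an auxiliary map $W_y\to\bigwedge\nolimits^3\KK^n$, $e_0\wedge y+u\wedge v\mapsto u\wedge v\wedge y$. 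Your route is more economical; the paper's has the mild byproduct of identifying the full fibre structure of $\kappa$ over each $y$-slice.

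Two points need tidying. First, your ``key observation'' that $\overline Y$ was \emph{defined} as the image of $\kappa'$ is false and would in fact render the proposition vacuous; $\overline Y=\Spec(\mathcal O(\overline X)^{\mathbb G_a})$ is the full Grassmannian cone by Proposition~\ref{pro:invariants}(ii), and the whole content of the statement is that $\kappa$ misses $\overline Y^\star\setminus\{0\}$. You immediately abandon this misstatement for the correct decomposability argument, so simply delete that sentence. Second, the scalar bookkeeping you flag is indeed not what you wrote: $\kappa(\lambda^{-1}x,\lambda^{-1}y)=(1,\lambda^{-1}x)\wedge(0,\lambda^{-1}y)$ is not a scalar multiple of $(1,x)\wedge(0,y)$ because the first entry $1$ does not scale. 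The clean fix is to rescale only the second vector: if $(1,x)\wedge(0,y)=\lambda\omega$ with $\lambda\in\KK^*$, then $\kappa(x,\lambda^{-1}y)=(1,x)\wedge(0,\lambda^{-1}y)=\lambda^{-1}\cdot\lambda\omega=\omega$. No appeal to $\overline Y$ being a cone is needed; this is just bilinearity of $\wedge$ together with the normalisation of the first coordinate.
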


\begin{proof}
From the definition of the morphism $\kappa$ it follows that its image is contained in $(\overline Y\setminus \overline Y^\star)\cup\{0\}$. For the reverse inclusion consider 
\[z=\sum z_{ij}e_i\wedge e_j\ \in\ \overline Y\setminus \overline Y^\star.\]
We define $y:=(z_{01},\ldots,z_{0n})\in \KK^n$; note that $y\not=0$ holds. With the identification $\KK^n=\{0\}\times \KK^n\subseteq\KK^{n+1}$ we obtain an affine subspace $W_y$ by
\[
W_y\ :=\ e_0\wedge y+\bigwedge\nolimits^2\KK^n
\quad \subseteq\quad
\left(\KK e_0\bigwedge\KK^n\right)\oplus\bigwedge\nolimits^2\KK^n
\quad =\quad 
\bigwedge\nolimits^2\KK^{n+1}.
\]
Since $z$ lies in $W_y\cap \overline Y$, it suffices to show that $\kappa(\,\cdot\,,y)$ maps $\KK^n$ onto $W_y\cap \overline Y$. Clearly, by definition of $\kappa$, the image of $\kappa(\,\cdot\,,y)$ lies in $W_y\cap \overline Y$. To show surjectivity we regard $W_y$ as a vector space with origin $e_0\wedge y$. Then there is a linear map
 \[
  \varphi\colon W_y\to \bigwedge\nolimits^3\KK^n;\quad  e_0\wedge y+u\wedge v\ \mapsto\  u\wedge v\wedge y.
 \]
Observe that we have inclusions $\mathrm{Im}(\kappa(\,\cdot\,,y))\subseteq Z_y\subseteq \mathrm{ker}(\varphi)$. We claim that equality holds in both cases. Since $\kappa(\,\cdot\,,y)$ is linear of rank $n-1$, the claim follows from
 \[
 \mathrm{dim}(\mathrm{ker} (\varphi))\ =\ \mathrm{dim}(W_y)-\mathrm{rank}(\varphi)\  =\ \binom{n}{2} - \binom{n-1}{2}\ =\ n-1.
\]
\end{proof}


We recall from \cite[Section~2]{hausen:amplecone} the definition of the GIT-fan. Let the algebraic torus $H:=(\KK^*)^n$ act diagonally on $\KK^r$ via the characters $\chi^{w_1},\ldots,\chi^{w_r},\ w_i\in\ZZ^n$, i.e.
\[
 h\cdot z\ :=\ (\chi^{w_1}(h)\,z_1,\ldots,\chi^{w_r}(h)\,z_r)
\]
and suppose that $Y\subseteq\KK^r$ is invariant under this action. Then the {\it GIT-fan} is defined as the collection of cones
\[
 \Lambda_H(Y)\ :=\ \{\lambda(w);~w\in\QQ^n\};\qquad\lambda(w)\ :=\ \bigcap_{w\in \omega_I}\omega_I\ \subseteq\  \QQ^n,
\]
where $\omega_I:=\mathrm{cone}(w_i;\ i\in I)$ is the cone associated to a {\it $Y$-set} $I$, i.e. a subset $I\subseteq\{1,\ldots,r\}$ for which the corresponding stratum $\{y\in Y;~y_i\not=0\iff i\in I\}$ is non-empty.

We turn back to our setting. The $\mathrm{Cl}(X)$-grading of the Cox ring $\mathcal R(X)=\mathcal O(\overline X)$ yields a diagonal action of the algebraic torus $H:=(\KK^*)^n=\Spec(\KK[\mathrm{Cl}(X)])$ on $\overline X=(\KK^n)^2$ where
\[
 h\cdot (x,y)\ =\ (h_1x_1\:,\ldots ,\:h_nx_n\:,\:h_1y_1\:,\ldots ,\:h_ny_n).
\]
Since the subalgebra $\mathcal O(\overline X)^{\mathbb G_a}$ inherits the $\mathrm{Cl}(X)$-grading, the $H$-action descends to its spectrum $\overline Y\subseteq\bigwedge^2\KK^{n+1}$, turning $\kappa$ into an equivariant morphism. Here the action is explicitly described by
\[h\cdot e_0\wedge e_j=h_j\;e_0\wedge e_j,\qquad h\cdot e_i\wedge e_j=h_ih_j\; e_i\wedge e_j.\]
Note that this action differs from the well known maximal torus action. It rather is a submaximal action, with some connection to the maximal one, see Proposition~\ref{pro:git_subfan}.

In order to obtain the GIT-fan $\Lambda_H(\overline Y)$ we consider the {\it two-block partitions} of $N:=\{1,\ldots,n\}$, i.e. partitions where $N$ is a union of two disjoint subsets $A,A^c$. To each such partition $R=\{A,A^c\}$ we associate the hyperplane 
\[
 \mathcal H_R:=\left\{x\in\QQ^n;~\sum_{i\in A}x_i=\sum_{i\in A^c}x_i\right\}.
\]

\begin{theorem}
\label{thm:fan}
Consider the above $H$-action on the affine cone $\overline Y$ over the Grassmann variety $\mathrm{Gr}(2,n+1)$ and set $\Omega:=\QQ_{\ge 0}^n$. The GIT-fan $\Lambda_H(\overline Y)$ is the fan supported on $\Omega$ with walls given by the intersections $\mathcal H_R\cap \Omega$ where $R$ runs through the two-block partitions of $N$.
\end{theorem}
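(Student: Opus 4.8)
The plan is to compute $\Lambda_H(\overline Y)$ through the concrete embedding $\overline Y\hookrightarrow\bigwedge^2\KK^{n+1}$ of Proposition~\ref{pro:invariants}, under which $\overline Y$ is identified with the affine cone over $\mathrm{Gr}(2,n+1)$ with Pl\"ucker coordinates $T_{ab}$, $0\le a<b\le n$, and the $H=(\KK^*)^n$-action has weights $\deg T_{0j}=e_j$ and $\deg T_{jk}=e_j+e_k$; with the convention $\bar e_0:=0$, $\bar e_i:=e_i$ for $i\ge 1$ this reads uniformly $\deg T_{ab}=\bar e_a+\bar e_b$. The stratification of this affine cone by vanishing of Pl\"ucker coordinates is its stratification by the underlying matroid, so the non-empty $\overline Y$-sets different from $\{0\}$ are exactly the basis sets of rank-two matroids $M$ on $\{0,1,\dots,n\}$, all realisable over $\KK$. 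If $L$ denotes the loops and $Q_1,\dots,Q_m$ the parallel classes of $M$, the associated orbit cone is
\[
 \omega_M\ :=\ \mathrm{cone}\bigl(\bar e_a+\bar e_b\ ;\ a\in Q_s,\ b\in Q_t,\ s\ne t\bigr)\ \subseteq\ \Omega .
\]
Taking the matroid with classes $\{0\}$ and $N=\{1,\dots,n\}$ one gets $\omega_M=\mathrm{cone}(e_1,\dots,e_n)=\Omega$, and every $\omega_M$ lies in $\Omega$; hence $\Lambda_H(\overline Y)$ is supported on $\Omega$ and everything reduces to understanding the cones $\omega_M$.

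The technical heart is the explicit polyhedral description of these cones: if $0$ is a loop then
\[
 \omega_M\ =\ \Bigl\{x\in\Omega\ ;\ x_c=0\ (c\in L),\ \ \textstyle\sum_{i\in Q_s}x_i\le\sum_{i\in N\setminus L\setminus Q_s}x_i\ \ (1\le s\le m)\Bigr\},
\]
while if $0\in Q_1$, writing $\bar Q_1:=Q_1\setminus\{0\}$,
\[
 \omega_M\ =\ \Bigl\{x\in\Omega\ ;\ x_c=0\ (c\in L),\ \ \textstyle\sum_{i\in\bar Q_1}x_i\le\sum_{i\in N\setminus L\setminus\bar Q_1}x_i\Bigr\},
\]
so in the second case only the class of $0$ matters beyond the loops. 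The inclusion ``$\subseteq$'' is a direct check on the generators $\bar e_a+\bar e_b$, using that $a$ and $b$ lie in different classes. For ``$\supseteq$'' I would argue by induction on $\sum_i x_i$: given $x$ in the right-hand side, pick two indices in different classes with positive coordinates that span a generator and subtract the largest admissible multiple of it; one can always choose the generator so that all displayed inequalities are preserved, since at most two of the inequalities $\sum_P x_i\le\sum x_i$ can be tight at once, and when exactly two are tight the support of $x$ is confined to the union of the two classes, leaving a safe choice. In the case $0\in Q_1$ one first spends the $\bar Q_1$-mass along generators $e_a+e_b$ and then absorbs the rest into the generators $e_b=\bar e_0+\bar e_b$, a transportation problem whose feasibility is precisely the stated inequality. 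This peeling argument --- keeping the partition-inequalities intact and treating the loop and non-loop cases separately --- is where the work lies.

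Granting this description, the theorem follows formally. On the subspace $\{x_c=0:c\in L\}$ the inequalities above become the half-space conditions for the hyperplanes $\mathcal H_R$, so each $\omega_M$ is the intersection of $\Omega$ with a face of $\Omega$ and finitely many closed half-spaces bounded by the $\mathcal H_R$; in particular every $\omega_M$ is a union of cones of the fan $\mathcal C$ obtained by subdividing $\Omega$ along all the walls $\mathcal H_R\cap\Omega$. Conversely, for a two-block partition $R=\{A,A^c\}$ of $N$ the matroids with parallel classes $\{0\}\cup A,\,A^c$ and $\{0\}\cup A^c,\,A$ have orbit cones $\{x\in\Omega;\ \sum_Ax_i\le\sum_{A^c}x_i\}$ and $\{x\in\Omega;\ \sum_{A^c}x_i\le\sum_Ax_i\}$, the two closed halves of $\Omega$ cut by $\mathcal H_R$, and for $i\in N$ the matroid with loop $i$ and classes $\{0\},\,N\setminus\{i\}$ has orbit cone $\Omega\cap\{x_i=0\}$. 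Feeding these into $\lambda(w)=\bigcap_{w\in\omega_M}\omega_M$, one checks for every $w\in\Omega$ that $\lambda(w)$ equals the cone of $\mathcal C$ whose relative interior contains $w$: since each $\omega_M$ is a union of $\mathcal C$-cones, $\lambda(w)$ contains that cone, and the orbit cones just listed already cut it out, giving the reverse inclusion. Hence $\Lambda_H(\overline Y)=\{\lambda(w);\ w\in\Omega\}=\mathcal C$, which is precisely the fan supported on $\Omega$ with walls $\mathcal H_R\cap\Omega$. The combinatorial description of the orbit cones in the middle step is the main obstacle.
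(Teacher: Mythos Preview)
Your approach is correct but genuinely different from the paper's. The paper proceeds in two stages: it first proves that the GIT-fan $\Lambda_H(\overline Y^\star)$ of the smaller Grassmannian cone (for which the wall description is already known from Dolgachev--Hu) is a subfan of $\Lambda_H(\overline Y)$; this is the content of Proposition~\ref{pro:git_subfan}, established by showing that for every $\overline Y$-set $I$ one can produce a $\overline Y^\star$-set $J$ with $\omega_I\cap\Omega^\star=\omega_J$. It then handles the remaining region $\Omega\setminus\mathrm{relint}(\Omega^\star)$, which decomposes into the simplicial ``corner'' cones $\sigma_i=\mathrm{cone}(w_{ij};\,j\ne i)$, by checking directly that each $\omega_I\cap\sigma_i$ is a face of $\sigma_i$. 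You instead bypass the reduction to $\overline Y^\star$ entirely: recognising the $\overline Y$-sets as basis sets of rank-two matroids on $\{0,\dots,n\}$ (which is exactly what condition~$(*)$ in Proposition~\ref{pro:ffaces} encodes), you give a uniform halfspace description of every orbit cone $\omega_M$ and read off the fan from that.

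What each route buys: the paper's argument imports the classical result for $\mathrm{Gr}(2,n)$ and, as a by-product, establishes the subfan relation of Proposition~\ref{pro:git_subfan}, which is used independently later (for the embedding $\overline M_{0,n}\subseteq\overline Y\limquot H$). Your argument is self-contained and conceptually cleaner, treating the cases $0$ loop versus $0$ non-loop on an equal footing; in particular it recovers the Dolgachev--Hu description of $\Lambda_H(\overline Y^\star)$ as the special case where $0$ is always a loop. The one place to tighten is the peeling argument for the inclusion ``$\supseteq$'' in your description of $\omega_M$ when $0$ is a loop: the observation that at most two of the class-inequalities can be tight (since three tight constraints would force $\sum_{Q_1}+\sum_{Q_2}+\sum_{Q_3}=\tfrac32\sum x_i$) is correct and drives the extreme-ray analysis, but the inductive phrasing ``induction on $\sum_i x_i$'' should be replaced by a direct determination of the extreme rays of the polyhedral cone, or by the standard identification of this cone with the cone over the rank-two matroid polytope.
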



The key step of the proof is relate our submaximal $H$-action on $\overline Y$ to the maximal torus action on the smaller Grassmannian cone $\overline Y^\star$, see Proposition~\ref{pro:git_subfan}. The latter action is well understood, in particular the GIT-fan was described in \cite[Example~3.3.21]{dolgachevhu:GIT}.

The first step, however, is to provide a description of the $\overline Y$- and $\overline Y^\star$-sets. We need some further notation:
\[N:=\{1,\ldots,n\}\qquad \mathbf N:=\{\{i,j\};\ 1\le i<j\le n\}\]
\[N_{0}:=\{0,\ldots,n\}\qquad \mathbf N_0:=\{\{i,j\};~0\le i<j\le n\}\]
Recall that the cones over the Grassmannians lie in the wedge products $\overline Y^\star\subseteq\bigwedge^2\KK^n$, $\overline Y\subseteq\bigwedge^2\KK^{n+1}$. We use the above index sets $\mathbf N$ and $\mathbf N_0$ to refer to the coordinate indices where $\{i,j\}$ labels $e_i\wedge e_j$.

\begin{proposition}
\label{pro:ffaces}
A subset $I\subseteq \mathbf N_0$ is a $\overline Y$-set if and only if $I$ satisfies the following condition
 \[(*)\qquad\{i,j\},\{k,l\}\in I
 \quad\Longrightarrow\quad
 \{j,l\},\,\{i,k\}\in I
 \quad\text{or}\quad
 \{j,k\},\,\{i,l\} \in I.\]
 \end{proposition}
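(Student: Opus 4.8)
The plan is to unwind the definition of a $\overline Y$-set. Recall that $I \subseteq \mathbf N_0$ is a $\overline Y$-set precisely when the stratum $\{z \in \overline Y;\ z_{ij} \neq 0 \iff \{i,j\} \in I\}$ is non-empty, so I must characterize which support sets actually occur among decomposable bivectors $(1,x)\wedge(0,y) \in \overline Y$, or more generally among points of the affine cone over $\mathrm{Gr}(2,n+1)$. The cleanest route is via the Plücker relations, which by Proposition~\ref{pro:invariants}(ii) cut out $\iota(\overline Y)$ inside $\bigwedge^2 \KK^{n+1}$: for $0 \le i<j<k<l \le n$ we have $z_{ij}z_{kl} - z_{ik}z_{jl} + z_{il}z_{jk} = 0$.

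First I would prove the "only if" direction. Suppose $I$ is a $\overline Y$-set, witnessed by a point $z \in \overline Y$ with support exactly $I$, and suppose $\{i,j\},\{k,l\} \in I$. By relabelling (the indices $i,j,k,l$ need not be distinct a priori, but if two of them coincide the implication is immediate since then one of the required pairs already appears among $\{i,j\},\{k,l\}$ — I would dispose of this degenerate case first), assume $i<j<k<l$ up to reordering the four indices; then the corresponding Plücker relation forces that among the three monomials $z_{ij}z_{kl}$, $z_{ik}z_{jl}$, $z_{il}z_{jk}$, if one is non-zero then at least one other is non-zero. Since $z_{ij}z_{kl} \neq 0$ by hypothesis, either $z_{ik}z_{jl} \neq 0$, i.e. both $\{i,k\},\{j,l\} \in I$, or $z_{il}z_{jk} \neq 0$, i.e. both $\{i,l\},\{j,k\} \in I$. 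Rewriting in the unordered notation of the statement gives exactly condition $(*)$. I would check that the precise pairing in $(*)$ matches one of these two alternatives for every way the pairs $\{i,j\}$, $\{k,l\}$ can be presented.

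The main work is the "if" direction: given $I$ satisfying $(*)$, I must exhibit a point of $\overline Y$ with support exactly $I$. The natural approach is to look for a point of the form $z = e_a \wedge w$ for a suitable vector $w$, or a sum of at most two such wedges, and show that $(*)$ is exactly the closure condition on supports. More robustly, I would argue that $(*)$ says $I$ is closed under the "Plücker exchange," and then show by an explicit construction — parametrizing $\overline Y$ via $\kappa$, i.e. choosing $x, y \in \KK^n$ with prescribed vanishing pattern — that any such $I$ is realized; here one uses $z_{0i} = y_i$ and $z_{ij} = x_i y_j - x_j y_i$ and picks the $x_i, y_i$ generically subject to the constraints imposed by $I$. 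I expect this realization step to be the main obstacle: one must simultaneously force certain coordinates to vanish and others not to, and verify no further Plücker relation creates an unwanted cancellation; the combinatorial condition $(*)$ is precisely what guarantees consistency, so the proof amounts to showing $(*)$-closed sets are in bijection with the "matroid flats / realizable supports" of the rank-$2$ situation. A convenient shortcut is to induct on $|I|$ or on $n$, building $z$ from a point supported on a smaller index set by adding one coordinate at a time, using $(*)$ to ensure each extension stays inside $\overline Y$.
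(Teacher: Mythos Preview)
Your ``only if'' direction is correct and matches the paper's argument exactly: the Pl\"ucker relations force $(*)$, and the degenerate case of coinciding indices is indeed trivial.

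For the ``if'' direction, your overall shape --- parametrize via $\kappa$, i.e.\ write $z=(1,x)\wedge(0,y)$, and fall back on induction on $n$ for a residual case --- is also the paper's strategy. However, ``pick the $x_i,y_i$ generically subject to the constraints imposed by $I$'' is where the real content lies, and you have not supplied it. The constraints are not soft: for $\{0,i\}\notin I$ you are forced to set $y_i=0$, and for $\{i,j\}\notin I$ with $\{0,i\},\{0,j\}\in I$ you are forced to set $x_i=x_j$. You must check that these forced equalities among the $x_i$ are mutually \emph{consistent} (i.e.\ that ``$\{i,j\}\notin I$'' is transitive on $\{i:\{0,i\}\in I\}$, which uses $(*)$) and that they do not create unwanted vanishing elsewhere; similarly you must check that a single choice of $x_i$ works simultaneously across all $j$ in the mixed case. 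The paper carries this out explicitly: it sets $y_i$ to be the indicator of $\{0,i\}\in I$, builds two auxiliary graphs on $N$ whose connected components encode precisely the forced equalities among the $x_i$, assigns $x_i$ to be the index of its component (or $0$), and then verifies in three cases --- according to which of $\{0,i\},\{0,j\}$ lie in $I$ --- that $z_{ij}\neq 0\iff\{i,j\}\in I$, each case invoking $(*)$. The induction on $n$ is used only for the boundary case $I\subseteq\mathbf N$, where no $\{0,k\}$ lies in $I$ and hence $y=0$ is forced; then $I$ is handled as a $\overline Y^\star$-set for the smaller Grassmannian.

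One side remark: your suggestion to allow ``a sum of at most two such wedges'' would leave $\overline Y$, since a generic sum of two decomposable bivectors has rank four and hence does not lie on the cone over $\mathrm{Gr}(2,n+1)$. The single-wedge construction suffices once the induction handles $I\subseteq\mathbf N$.
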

 \begin{proof}
 It follows from the nature of the Pl\"ucker relations that a $\overline Y$-set $I$ has in fact the property $(*)$. We prove that a subset of $N$ satisfying $(*)$ is a $\overline Y$-set by induction on $n$. For this recall that we have commutative diagram of closed embeddings
 \[
  \xymatrix{
 {C(\mathrm{Gr}(2,n+1))}
 \ar@{=}[r]
 &
  {\overline Y}
  \ar[r]
  &
  {\left(\KK e_0\bigwedge\KK^n\right)\oplus\bigwedge\nolimits^2\KK^n}
  \ar@{=}[r]
  &
  {\bigwedge\nolimits^2\KK^{n+1}}
  \\
  {C(\mathrm{Gr}(2,n))}
 \ar@{=}[r]
 \ar[u]
 &
  {\overline Y^\star}
  \ar[r]
  \ar[u]
  &
  {\bigwedge\nolimits^2\KK^n}
  \ar[u]
  }
 \]
where the embedding of the surrounding wedge products is reflected by the inclusion $\mathbf N\subseteq\mathbf N_0$. Let $I\subseteq\mathbf N_0$ be a set with the property $(*)$. If $I\subseteq \mathbf N$ holds, then the assertion follows from the induction hypothesis. We turn to the case where there exists $k\in N$ such that $\{0,k\}$ lies in $I$. We will explicitly construct an element $z\in\overline Y$ for which $z_{ij}$ vanishes if and only if $\{i,j\}$ does not lie in $I$. For this we introduce two graph graph structures on $N$ by $\mathcal G_{12}:=(N,\mathcal E_1\cup\mathcal E_2)$ and $\mathcal G_2:=(N,\mathcal E_2)$, where $\mathcal E_1$, $\mathcal E_2$ are sets of edges on $N$ defined by 
  \begin{align*}
   \mathcal E_1\ &:=\ \Big\{\{i,j\}\in I;~\{0,i\}\in I\text{ or }\{0,j\}\in I\Big\},\\
   \mathcal E_2\ &:=\ \Big\{\{i,j\}\in\mathbf N_0\setminus I;~\{0,i\},\{0,j\}\in I\Big\}.
  \end{align*}
From the definition of the edge sets of the respective graphs we know that if $\{i\}$ is a connected component of $\mathcal G_{12}$, then it also is a connected component of $\mathcal G_2$. Let $\mathcal F_1,\ldots,\mathcal F_q$ be the connected components of $\mathcal G_2$ different from a component $\{i\}$ of $\mathcal G_{12}$. We define a vector $x\in\KK^n$ by
\[
 x_i\ :=\ \begin{cases}
       0 & \text{if }\{i\}\text{ is a component of }\mathcal G_{12},\\
       p & \text{if }\{i\}\subseteq\mathcal F_p \text{ holds}.
      \end{cases}
\]


Moreover, we define $y\in \KK^n$ by $y_j:=1$ if $\{0,j\}\in I$ and $y_j:=0$ if $\{0,j\}\notin I$. We then claim that $z:=(1,x)\wedge (0,y)$ has the property
\[z_{ij}\not=0\quad\iff\quad\{i,j\}\in I.\]

Since $z_{0j}=y_j$ holds, it is clear that the claim is true for the components of this type. For $0\not=i<j$ the components of $z$ can be written as
\[
 z_{ij}\ =\ 
 x_i\,y_j\,-\,x_j\,y_i\ =\ 
 \begin{cases}
 0 & \text{if }\{0,i\},\{0,j\}\notin I,\\
 \pm\,x_i & \text{if }\{0,i\}\notin I,\{0,j\}\in I,\\
 x_i-x_j & \text{if }\{0,i\},\{0,j\}\in I.
 \end{cases}
\]
We now go through these three cases and verify for each that $\{i,j\}$ lies in $I$ if and only if $z_{ij}\not=0$ holds.

Assume that $\{0,i\},\{0,j\}\notin I$ holds and recall that there exists a $k\in N$ with $\{0,k\}\in I$. It follows from $(*)$ applied to $\{0,k\},\,\{i,j\}$ that $\{i,j\}$ does not lie in $I$.

For the second case suppose that $\{0,i\}\notin I$ and $\{0,j\}\in I$ hold. We then have
 \begin{align*}
  x_i\not=0 &\iff \text{there exists }l\in N\text{ such that }\{i,l\}\in\mathcal E_1\text{ or }\{i,l\}\in\mathcal E_2\\
  & \iff \text{there exists }l\in N\text{ such that }\{i,l\}\in\mathcal E_1\\
  & \iff \{i,j\}\in I
 \end{align*}
For the second equivalence note that $\{0,i\}\notin I$ holds which implies $\{i,l\}\notin\mathcal E_2$. The third equivalence is due to an application of $(*)$ to $\{0,j\}\,\,\{i,l\}$.

In the last case where $\{0,i\},\{0,j\}\in I$ holds we obtain
\begin{align*}
  x_i=x_j & \iff 
  \begin{array}[t]{ll}
  i,j\text{ lie in the same connected component of }\mathcal G_2\\
  \text{or}\quad\{i\},\{j\} \text{ are connected components of }\mathcal G_{12}
  \end{array}
  \\
  &\iff 
     \{i,j\}\in\mathcal E_2 \text{ or }\{i\},\{j\} \text{ are connected components of }\mathcal G_{12}\\
  &\iff \{i,j\}\notin I
 \end{align*}
For the second equivalence we use that each connected component of $\mathcal G_2$ is a complete graph, which follows $(*)$.
\end{proof}

 \begin{remark}
 The affine cone $\overline Y^\star$ over the smaller Grassmannian $\mathrm{Gr}(2,n)$ is invariant under the $H$-action. The corresponding GIT-fan $\Lambda_H(\overline Y^\star)$ of this restricted action is well known, it was described in terms of walls in \cite[Example~3.3.21]{dolgachevhu:GIT} and \cite[Example~8.5]{gitviacox} as follows: Set 
 \[\Omega^\star:=\mathrm{cone}(e_i+e_j;~1\le i<j\le n)\subseteq\QQ_{\ge 0}^n.\]
 Then the GIT fan $\Lambda_H(\overline Y^\star)$ is the fan supported on $\Omega^\star$ with walls given by the intersections of $\Omega^\star$ with the above hyperplanes $\mathcal H_R$.
\end{remark}

\begin{proposition}
\label{pro:git_subfan}
 The GIT-fan $\Lambda_H(\overline Y^\star)$ is a subfan of $\Lambda_H(\overline Y)$.
\end{proposition}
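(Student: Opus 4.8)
The plan is to compare the two GIT-fans by relating their maximal cones and walls through the inclusion $\overline Y^\star \subseteq \overline Y$, using the explicit description of $\overline Y$-sets from Proposition~\ref{pro:ffaces}. First I would recall that both fans are supported on subsets of $\Omega = \QQ_{\ge 0}^n$, namely $\Lambda_H(\overline Y)$ is supported on all of $\Omega$ while $\Lambda_H(\overline Y^\star)$ is supported on $\Omega^\star = \mathrm{cone}(e_i+e_j;\ 1\le i<j\le n)$, as stated in the Remark. Both fans have their walls cut out by exactly the same collection of hyperplanes $\mathcal H_R$, $R$ a two-block partition of $N$. So the content of the claim is a compatibility statement: every cone of $\Lambda_H(\overline Y^\star)$ is a cone of $\Lambda_H(\overline Y)$, i.e. the GKZ-type chamber decomposition of $\overline Y$ restricts on $\Omega^\star$ to the chamber decomposition of $\overline Y^\star$.

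The key step is to show that, for a weight $w \in \Omega^\star$, the cone $\lambda_{\overline Y^\star}(w)$ computed inside $\Lambda_H(\overline Y^\star)$ coincides with $\lambda_{\overline Y}(w)$ computed inside $\Lambda_H(\overline Y)$. Recall $\lambda(w) = \bigcap_{w\in\omega_I^\circ}\omega_I$ where $I$ ranges over the relevant sets (of $\overline Y^\star$, resp. $\overline Y$) and $\omega_I = \mathrm{cone}(w_i;\ i\in I)$. Since $\mathbf N \subseteq \mathbf N_0$, every $\overline Y^\star$-set is automatically a $\overline Y$-set (take the corresponding point of $\overline Y^\star \subseteq \overline Y$), so the intersection defining $\lambda_{\overline Y}(w)$ ranges over a larger index family, giving a priori $\lambda_{\overline Y}(w) \subseteq \lambda_{\overline Y^\star}(w)$. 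For the reverse inclusion I would show that the extra $\overline Y$-sets $I \subseteq \mathbf N_0$ with $\{0,k\}\in I$ for some $k$ contribute cones $\omega_I$ whose intersection with $\Omega^\star$ does not cut $\lambda_{\overline Y^\star}(w)$ any finer. Concretely, the characters are $w_{0j} = e_j$ and $w_{ij} = e_i+e_j$; a point $w \in \Omega^\star$ by definition lies in the interior of some $\omega_J$ with $J \subseteq \mathbf N$, and the structural condition $(*)$ of Proposition~\ref{pro:ffaces} forces any $\overline Y$-set $I$ containing an index $\{0,k\}$ to be "built from" a $\overline Y^\star$-set together with a consistent choice of the $\{0,j\}$; one checks that the resulting $\omega_I$, intersected with $\Omega^\star$, is a union of cones already present in $\Lambda_H(\overline Y^\star)$, so passing to the finer subdivision adds nothing over $\Omega^\star$. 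Equivalently, and perhaps more cleanly, one can argue that the walls of $\Lambda_H(\overline Y)$ lying in $\Omega^\star$ are exactly the $\mathcal H_R \cap \Omega^\star$, which by the Remark are precisely the walls of $\Lambda_H(\overline Y^\star)$; combined with equal support over $\Omega^\star$ and the fact that a fan is determined by its support and its walls (its codimension-one skeleton), this yields that the two fans agree over $\Omega^\star$, hence $\Lambda_H(\overline Y^\star)$ is a subfan of $\Lambda_H(\overline Y)$.

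The main obstacle I anticipate is the bookkeeping needed to verify that no \emph{new} wall of $\Lambda_H(\overline Y)$ appears in the interior of $\Omega^\star$ beyond the $\mathcal H_R$'s — i.e. that the $\overline Y$-sets involving indices $\{0,k\}$ do not introduce spurious facets of GIT-chambers when restricted to $\Omega^\star$. This is exactly where condition $(*)$ and the graph-combinatorial description from the proof of Proposition~\ref{pro:ffaces} must be leveraged: one needs that whenever $w \in \Omega^\star$ lies on the boundary between two maximal cones of $\Lambda_H(\overline Y)$, that boundary is already visible from $\overline Y^\star$-sets alone. A careful but routine case analysis, paralleling the three-case split ($\{0,i\},\{0,j\}\notin I$; exactly one in $I$; both in $I$) in the proof above, should close this. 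Once that is in hand, the subfan claim is immediate: every cone of $\Lambda_H(\overline Y^\star)$, being an intersection of $\omega_I$'s with $I\subseteq\mathbf N$ and also realizable with $I\subseteq\mathbf N_0$, is a cone of $\Lambda_H(\overline Y)$, and compatibility of intersections is inherited.
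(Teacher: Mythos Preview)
Your overall strategy is the right one and matches the paper's: every $\overline Y^\star$-set is a $\overline Y$-set, so $\lambda_{\overline Y}(w)\subseteq\lambda_{\overline Y^\star}(w)$ for $w\in\Omega^\star$, and the content lies in the reverse inclusion. However, there are two problems.

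First, the ``cleaner'' alternative you propose --- comparing walls --- is circular in the paper's logic. The wall description of $\Lambda_H(\overline Y)$ is Theorem~\ref{thm:fan}, and its proof begins with ``From Proposition~\ref{pro:git_subfan} we know that $\Lambda_H(\overline Y)$ has the asserted form on $\Omega^\star$.'' So you cannot invoke that description here; Proposition~\ref{pro:git_subfan} is the input to Theorem~\ref{thm:fan}, not a consequence of it.

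Second, in your primary approach you assert that $\omega_I\cap\Omega^\star$ is ``a union of cones already present in $\Lambda_H(\overline Y^\star)$'' and that a ``routine case analysis'' paralleling the three cases in Proposition~\ref{pro:ffaces} closes things. But this is precisely the entire substance of the proof, and the three-case split there (which tests vanishing of coordinates of a constructed point) is not the analysis needed here. The paper proceeds by an explicit construction: from a $\overline Y$-set $I\subseteq\mathbf N_0$ it builds
\[
J\ =\ J_1\cup J_2,\qquad J_1=I\cap\mathbf N,\qquad J_2=\{\{i,j\};\ \{0,i\},\{0,j\}\in I\},
\]
verifies via condition $(*)$ that $J$ is a $\overline Y^\star$-set, and then proves the \emph{equality} $\omega_I\cap\Omega^\star=\omega_J$ by a nontrivial rewriting of convex combinations (using identities such as $w_{0i}+w_{0j}=w_{ij}$ and $w_{01}=\tfrac12(w_{1j}+w_{1k}-w_{jk})$, together with the inequality $x_1\le x_2+\ldots+x_n$ encoding $x\in\Omega^\star$). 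This single-cone equality is sharper than ``union of cones'' and is exactly what makes the intersection argument go through. Your proposal does not supply this construction or the convex-geometric verification, so as it stands there is a genuine gap.
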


\begin{remark}
 Proposition~\ref{pro:git_subfan}, the universal property of the limit quotient (see \cite[Remark 2.3]{on_chow_quotients}) and the inclusion $\overline Y^\star\subseteq\overline Y$ show that there is a closed embedding of the respective limit quotients $\overline M_{0,n}=\overline Y^\star\limquot H\subseteq \overline Y\limquot H$. 
 
 Moreover, for any $\lambda\in\Lambda_H(\overline Y^\star)$ the preimage $p^{-1}({\overline Y^\star}^\mathrm{ss}(\lambda))$ lies in ${\overline Y}^\mathrm{ss}(\lambda)$ where 
 \[
  p\colon \bigwedge\nolimits^2\KK^{n+1}\ =\ \KK e_0\bigwedge\KK^n\ \oplus\ \bigwedge\nolimits^2\KK^n\quad \to\quad \bigwedge\nolimits^2\KK^n
 \]
is the projection. This gives rise to an open subset $U\subseteq\overline Y\limquot H$ of the limit quotient and a surjective morphism $\overline p\colon U\to\overline M_{0,n}$.
\end{remark}

\begin{example}
Consider the weights of the coordinates of the $H$-action on $\bigwedge^2\KK^{n+1}$
\[
w_{01}:=e_1,\quad\ldots,\quad w_{0n}:=e_n,
\qquad
w_{jk}:=e_j+e_k,\ 1\le j<k\le n.
\]
 The following pictures of polytopal complexes arise from intersecting the GIT-fan $\Lambda_H(\overline Y)$ with the hyperplane given by $1=x_1+\ldots+x_n$ in the cases $n=3,4$. The shaded area indicates the support $\Omega^\star$ of $\Lambda_H(\overline Y^\star)$.
 
 \[
 \begin{array}{ccc}
 \gitfani
 &
 \qquad
 &
 \gitfanii
 \quad
 \gitfaniii
 \\
 n=3
 &
 \qquad
 &
 n=4 
 \end{array}
\]
In the case $n=3$ the three walls of the GIT-fan are generated by two of the vectors $w_{12},w_{13},w_{23}$ and correspond to the two-block partitions 
\[
 \{\{1\},\{2,3\}\},\qquad
 \{\{2\},\{1,3\}\}
 \qquad\text{and}\qquad
 \{\{3\},\{1,2\}\}.
\]
In the case $n=4$ again the hyperplanes separating $\Omega^\star$ from the remaining $4$ cones correspond to the partitions of the type $\{\{i\},\{j,k,l\}\}$. The dotted lines in the right picture indicate the fan structure inside $\Lambda_H(\overline Y^\star)$. There are eight maximal cones arising from $3$ hyperplanes of the form $\{\{i,j\},\{k,l\}\}$.

\end{example}
 
\begin{proof}[Proof of Proposition~\ref{pro:git_subfan}] 
Recall that the weights of the coordinates of the $H$-action are
\[
w_{01}:=e_1,\quad\ldots,\quad w_{0n}:=e_n,
\qquad
w_{jk}:=e_j+e_k,\ 1\le j<k\le n.
\]
The GIT-fans $\Lambda_H(\overline Y)$ and $\Lambda_H(\overline Y^\star)$ are the collections of cones which arise as intersections of cones $\omega_I=\mathrm{cone}(w_{ij};~\{i,j\}\in I)$ associated to $\overline Y$- or $\overline Y^\star$-sets respectively. From Proposition~\ref{pro:ffaces} we know that every $\overline Y^\star$-set is also a $\overline Y$-set. This means we only have to show that for every $\overline Y$-set $I\subseteq\mathbf N_0$ there exists a $\overline Y^\star$-set $J\subseteq\mathbf N$ such that $\omega_I\cap\Omega^\star=\omega_J$ holds. For a $\overline Y$-set $I\subseteq\mathbf N_0$ we set
\[
 J:=J_1\cup J_2,\qquad J_1:=I\cap\mathbf N,\qquad J_2:=\{\{i,j\};~\{0,i\},\{0,j\}\in I\}
\]
and prove that $J$ has the required properties. We first claim that $J$ is an $\overline Y^\star$-set. For this we check that the condition of Proposition~\ref{pro:ffaces} applies to any two elements of $J$. If these two elements lie either both in $J_1$ or $J_2$ then the claim follows from $I$ being a $\overline Y$-set or the construction of $J_2$ respectively. For the remaining case consider $\{j,k\}\in J_1$ and $\{i_1,i_2\}\in J_2$. Since both $\{0,i_1\}$ and $\{j,k\}$ lie in $I$, we can without loss of generality assume that also $\{i_1,j\}$ and $\{0,k\}$ lie in $I$. Finally with $\{0,i_2\}\in I$ we conclude that $\{i_2,j\},\{i_1,k\}$ are elements of $J$. This shows that $J$ is a $\overline Y^\star$-set.

We now prove $\omega_I\cap\Omega^\star=\omega_J$. It is easy to see that $\omega_J$ is in fact contained in $\omega_I\cap\Omega^\star$; we turn to the reverse inclusion. With non-negative $a_i,a_{jk}$ let 
\[x\ :=\ \sum_{I\setminus \mathbf N}a_iw_{0i}\ +\ \sum_{I\cap\mathbf N} a_{jk}w_{jk}\]
lie in $\omega_I\cap\Omega^\star$. We show that $x$ is a non-negative linear combination of elements $w_\eta,\ \eta\in J$.
Let $a_{i_1}$ be minimal among all $a_i$ with $\{0,i\}\in I$. For an arbitrary $\{0,i_2\}\in I$ we then replace in the above sum
\[a_{i_1}w_{0i_1}\ +\ a_{i_2}w_{0i_2}\qquad\text{by}\qquad (a_{i_2}-a_{i_1})w_{0i_2}\ +\ a_{i_1}w_{i_1i_2} .\]
Note that now $\{i_1,i_2\}$ lies in $J_2$. Iterating this process we see that there exists some $\{0,i\}\in I$ such that $x$ has the form
\[
(**)\qquad\qquad
x\ =\ b_iw_{0i}\ +\ \sum_{J_1\cup J_2}b_{jk}w_{jk}.
\]
Without loss of generality we assume that $i=1$ holds. The condition $x\in\Omega^\star$ implies $x_1\le x_2+\ldots+x_n$, hence we have
\[
b_{1}\ \le\ 
2\!\!\!\!\sum_{
\scalebox{0.6}{$
\begin{array}{cc}
 \{j,k\}\in J\\
 j,k\not= 1
\end{array}
$}
}\!\!\!\!b_{jk}
\qquad
\text{and}
\qquad
b_{1}\ =\ 
2\!\!\!\!\sum_{
\scalebox{0.6}{$
\begin{array}{cc}
 \{j,k\}\in J\\
 j,k\not= 1
\end{array}
$}
}\!\!\!\!b'_{jk}
\]
for certain $0\le b'_{jk}\le b_{jk}$. Plugging $w_{01}=\;^1\!/_2(w_{1j}+w_{1k}-w_{jk})$ into $(**)$ we obtain a non-negative linear combination
\[
 x\ =\ \!\!\!\!\sum_{
 \scalebox{0.6}{$
\begin{array}{cc}
 \{j,k\}\in J\\
 j,k\not= 1
\end{array}
$}
}\!\!\!\!
\left((b_{1j}+b_{jk}')w_{1j}+(b_{1k}+b_{jk}')w_{1k}+(b_{jk}-b_{jk}')w_{jk}\right)
+
\sum_{\scalebox{0.6}{$\{1,k\}\in J$}}\!\!b_{1k}w_{1k}.
\]
The last step to show is that for $\{j,k\}\in J$ both $\{1,j\}$ and $\{1,k\}$ lie in $J$. Recall that we have $\{0,1\}\in I$. If $\{j,k\}$ lies in $J_2$, then this follows directly from construction of $J_2$. Otherwise we can without loss of generality assume that $\{0,j\},\{1,k\}$ lie in $I$. The claim again follows from the construction of $J_2$.
\end{proof}

\begin{proof}[Proof of Theorem~\ref{thm:fan}]
As before we denote the weights of coordinates with respect to the $H$-action by $w_{0i}=e_i,\ w_{jk}=e_j+e_k$. From Proposition~\ref{pro:git_subfan} we know that $\Lambda_H(\overline Y)$ has the asserted form on $\Omega^\star$. Note that the remaining support $\Omega\setminus\mathrm{relint}(\Omega^\star)$ is the union of the cones
\[
 \sigma_i\ :=\ \mathrm{cone}(w_{ij};~j\in N\setminus\{i\}),\qquad i=1,\ldots, n.
\]
None of the hyperplanes $\mathcal H_R$ intersect $\sigma_i$ in its relative interior. This means that we have to prove that $\sigma_i$ is a cone in the GIT-fan $\Lambda_H(\overline Y)$, i.e. the intersection of cones $\omega_I$ associated to $\overline Y$-sets. Note that $\sigma_i$ itself is a cone associated to a $\overline Y$-set. Hence, it suffices to show that for any $\overline Y$-set $I\subseteq \mathbf N_0$ the intersection $\omega_I\cap \sigma_i$ is a face of $\sigma_i$. Without loss of generality we assume that $i$ equals $1$ and set $\sigma:=\sigma_1$. We now claim that $\omega_I\cap\sigma=\omega_J$ holds where
\[
J:=J_1 \cup J_2;\qquad J_1:=I\cap\{\{1,j\};~j\in N_0\setminus\{1\}\},\quad J_2:=\{\{1,j\};~\{0,j\}\in I\}.
\]
To prove $\omega_J\subseteq\omega_I\cap\sigma$ note that any $w_{1j}$ with $\{1,j\}\in J_1$ clearly lies in $\omega_I\cap\sigma$. Hence, it suffices to show that for $w_{1j}$ with $\{0,j\}\in I$ the same holds. In case $\{0,1\}\in I$ this follows from $w_{1j}=w_{01}+w_{0j}\in \omega_I\cap\sigma$. Otherwise there must exist $\{1,l\}\in I$ and from Proposition~\ref{pro:ffaces} we know $\{0,l\},\{1,j\}\in I$. This implies $w_{1j}\in\omega_I\cap\sigma$.

For the reverse inclusion $\omega_I\cap\sigma\subseteq\omega_J$ consider the non-negative linear combination
\[
 x\ :=\ 
 a_{01}w_{01}\ +
 \!\!\!\!
 \sum_{
 \scalebox{0.6}{$
\begin{array}{cc}
 \{1,j\}\in I\\
 j\not= 0
\end{array}
$}
}\!\!\!a_{1j}w_{1j}\ +
\!\!\!\!
 \sum_{
 \scalebox{0.6}{$
\begin{array}{cc}
 \{0,j\}\in I\\
 j\not= 1
\end{array}
$}
}\!\!\!a_{0j}w_{0j}\ +
\!\!\!\!
 \sum_{
 \scalebox{0.6}{$
\begin{array}{cc}
 \{j,k\}\in I\\
 j,k\not= 0,1
\end{array}
$}
}\!\!\!\!\!a_{jk}w_{jk}\ \in\ \omega_I
\]
Since $x$ lies in $\sigma$, we have $x_1\ge x_2+\ldots+x_n$ and this amounts to
\[
 a_{01}\ \ge
\!\!\!\!
 \sum_{
 \scalebox{0.6}{$
\begin{array}{cc}
 \{0,j\}\in I\\
 j\not= 1
\end{array}
$}
}\!\!\!a_{0j}\ +
2
\!\!\!\!
 \sum_{
 \scalebox{0.6}{$
\begin{array}{cc}
 \{j,k\}\in I\\
 j,k\not= 0,1
\end{array}
$}
}\!\!\!\!\!a_{jk}.
\]
If $\{0,1\}\notin I$ holds, i.e. $a_{01}=0$, then $x$ lies in the cone generated by the $w_{1j}$, $\{1,j\}\in J_1$. Otherwise with
$w_{0j}=w_{1j}-w_{01}$ and $w_{jk}=w_{1j}+w_{1k}-2w_{01}$ we get a non-negative linear combination
\begin{align*}
 x&=
 \!\!\!\!
 \sum_{
 \scalebox{0.6}{$
\begin{array}{cc}
 \{1,j\}\in I\\
 j\not= 0
\end{array}
$}
}\!\!\!a_{1j}w_{1j}\ +
\!\!\!\!
 \sum_{
 \scalebox{0.6}{$
\begin{array}{cc}
 \{0,j\}\in I\\
 j\not= 1
\end{array}
$}
}\!\!\!a_{0j}w_{1j}\ +
\!\!\!\!
 \sum_{
 \scalebox{0.6}{$ 
\begin{array}{cc}
 \{j,k\}\in I\\
 j,k\not= 0,1
\end{array}
$}
}\!\!\!\!\!a_{jk}(w_{1j}+w_{1k})\\
&~
\quad
+
\left(
a_{01}\ -\!\!\!\!
 \sum_{
 \scalebox{0.6}{$
\begin{array}{cc}
 \{0,j\}\in I\\
 j\not= 1
\end{array}
$}
}\!\!\!a_{0j}\ -
2
\!\!\!\!
 \sum_{
 \scalebox{0.6}{$
\begin{array}{cc}
 \{j,k\}\in I\\
 j,k\not= 0,1
\end{array}
$}
}\!\!\!\!\!a_{jk}
\right)
w_{01}.
\end{align*}
The last thing to check is that all the above $w_{ij}$ lie in $\omega_J$. For this suppose that $\{j,k\}\in I$ holds. Since $\{0,1\}$ is contained in $I$, it follows from the construction of $J$ that both $\{1,j\}$ and $\{1,k\}$ lie in $J$.
\end{proof}

\section{Combinatorial blow-ups}
\label{sec:combinatorics}

In this section we will provide a criterion whether a given cone lies in the iterated stellar subdivision of a simplicial fan. In \cite{incidence_combinatorics} Feichtner and Kozlov deal with this problem in the more general setting of semilattices and give a nice characterisation in the case where the collection of subdivided cones forms a building set. We approach the issue of blowing up non-building sets, see Theorem~\ref{thm_main_general}. For details on stellar subdivisions see e.g. \cite[Definition 5.1]{cox_and_combi_2}.

Let $\mathcal V$ be a family of rays in a vector space and consider a $\mathcal V$-fan $\Sigma_0$, i.e. a fan with rays given by $\mathcal V$. We then choose additional rays $\nu_i,~i=1,\ldots,r$ lying in the relative interiors $\sigma_i^\circ$ of pairwise different cones $\sigma_i\in\Sigma_0$. Moreover, we assume that $\sigma_i\precneqq \sigma_j$ implies $j<i$, which means that the larger the cone the earlier it will be subdivided. Now the questions comes up what the cones of the fan $\Sigma_r$ are which arises from $\Sigma_0$ by the subsequent stellar subdivisions in the rays $\nu_i$.

We call a subset $\mathcal S'$ of $\mathcal S:=\{\sigma_1,\ldots,\sigma_r\}$ {\it conjunct}, if the union $\bigcup_{\sigma\in\mathcal S'}(\sigma\setminus\{0\})$ is a connected subset in the usual sense and we set
\[
 \langle\mathcal S\rangle:=\left\{\sum_{\sigma\in \mathcal S'}\sigma\,;~\mathcal S'\subseteq\mathcal S\text{ conjunct}  \right\}.
\]
A collection $\mathcal C\subseteq\mathcal V\cup\mathcal S$ is called {\it geometrically nested}, if for any subset $\mathcal H\subseteq \mathcal C$ of pairwise incomparable cones with $|\mathcal H|\ge 2$ the following holds:
 \[
  \sum_{\tau\in\mathcal H} \tau\;\in\;\Sigma_0\setminus\langle\mathcal S\rangle.
 \]

\begin{proposition}
\label{pro_main_fan}
Let $\Sigma_0$ be a simplicial fan and $\nu_i\in\sigma_i^\circ$ rays in the relative interiors of pairwise different cones $\sigma_i\in\Sigma_0$. Assume that $\sigma_i\precneqq \sigma_j$ implies $j<i$ and let $\Sigma_r$ be the iterated stellar subdivision of $\Sigma_0$ in the rays $\nu_1,\ldots,\nu_r$ in order of ascending indices. If in the above notation $\mathcal C\subseteq \mathcal V\cup\mathcal S$ is geometrically nested, then $\mathrm{cone}(v,\nu_i;\ v\in\mathcal C\cap\mathcal V,\ \sigma_i\in\mathcal C\cap \mathcal S)$ lies in $\Sigma_r$.
\end{proposition}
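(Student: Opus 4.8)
The proof proceeds by induction on the number $r$ of stellar subdivisions. The base case $r=0$ is vacuous, since then $\mathcal S=\emptyset$ and the only geometrically nested collections consist of rays spanning a face of $\Sigma_0$. For the inductive step, the key is to peel off the \emph{last} subdivision, in the ray $\nu_r$ lying in the interior of the \emph{smallest} cone $\sigma_r$ among those being subdivided (by the hypothesis $\sigma_i\precneqq\sigma_j\Rightarrow j<i$). Write $\Sigma_{r-1}$ for the fan obtained after the first $r-1$ subdivisions, so $\Sigma_r$ is the stellar subdivision of $\Sigma_{r-1}$ in $\nu_r$. The cones of $\Sigma_r$ are then exactly: the cones of $\Sigma_{r-1}$ not containing $\sigma_r$, together with the cones $\mathrm{cone}(\nu_r,\tau)$ where $\tau\in\Sigma_{r-1}$ is a face of a cone containing $\sigma_r$ but $\tau$ itself does not contain $\sigma_r$.

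Given a geometrically nested $\mathcal C\subseteq\mathcal V\cup\mathcal S$, I split into two cases according to whether $\sigma_r\in\mathcal C$. If $\sigma_r\notin\mathcal C$, then $\mathcal C$ is also geometrically nested with respect to $\Sigma_0$ and the shorter list $\nu_1,\dots,\nu_{r-1}$ (one must check that the relevant incomparable subsets $\mathcal H$ still land in $\Sigma_0\setminus\langle\mathcal S\setminus\{\sigma_r\}\rangle$; since $\sigma_r$ is minimal this is where its minimality is used). By induction $\mathrm{cone}(v,\nu_i;v\in\mathcal C\cap\mathcal V,\sigma_i\in\mathcal C\cap\mathcal S)$ lies in $\Sigma_{r-1}$; one then argues this cone does not contain $\sigma_r$ — here the nestedness forbids $\sigma_r$ from being absorbed into the sum — so it survives into $\Sigma_r$. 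If $\sigma_r\in\mathcal C$, I set $\mathcal C':=\mathcal C\setminus\{\sigma_r\}$ and show $\mathcal C'$ is geometrically nested for the list $\nu_1,\dots,\nu_{r-1}$, so that $\tau:=\mathrm{cone}(v,\nu_i;v\in\mathcal C'\cap\mathcal V,\sigma_i\in\mathcal C'\cap\mathcal S)$ lies in $\Sigma_{r-1}$ by induction. One then verifies that $\tau$ is a face of a cone of $\Sigma_{r-1}$ containing $\sigma_r$ but does not itself contain $\sigma_r$ (again using nestedness: $\sigma_r$ is incomparable to the other members of $\mathcal C$, or dominated by one of them only in a controlled way), whence $\mathrm{cone}(\nu_r,\tau)\in\Sigma_r$, and this is precisely the asserted cone.

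The main obstacle is the bookkeeping in the second case: one must show that $\mathcal C'$ genuinely remains geometrically nested after deleting $\sigma_r$ and shortening the subdivision list, and simultaneously that the cone $\tau$ one obtains sits in the correct position relative to $\sigma_r$ inside $\Sigma_{r-1}$ — i.e.\ that $\sigma_r$ is contained in some cone of $\Sigma_{r-1}$ having $\tau$ as a face, but $\sigma_r\not\subseteq\tau$. This is where the definition of \emph{conjunct} and the role of $\langle\mathcal S\rangle$ enter essentially: the condition $\sum_{\tau\in\mathcal H}\tau\in\Sigma_0\setminus\langle\mathcal S\rangle$ for incomparable families is exactly what prevents two being-subdivided cones from having their sum swallowed into the fan in a way that would destroy the face relation. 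I expect the proof that removing $\sigma_r$ preserves nestedness to require a short lemma isolating how minimality of $\sigma_r$ interacts with connectedness of unions $\bigcup_{\sigma\in\mathcal S'}(\sigma\setminus\{0\})$. The remaining verifications — that cones not meeting $\sigma_r$ persist, and that $\mathrm{cone}(\nu_r,\tau)$ has the claimed generators — are routine once the fan-of-a-stellar-subdivision description above is in hand.
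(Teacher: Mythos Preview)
Your approach is correct and genuinely different from the paper's. The paper does \emph{not} argue by induction on $r$; instead it passes to the face poset of $\Sigma_0$, invokes the Feichtner--Kozlov correspondence between stellar subdivisions and semilattice blow-ups, enlarges $\mathcal V\cup\mathcal S$ to a building set $\langle\!\langle\mathcal V\cup\mathcal S\rangle\!\rangle$ via a closure construction, and then applies a general semilattice result (Theorem~\ref{thm_main_general}) which in turn rests on the Feichtner--Kozlov theorem for building sets. Your direct induction avoids this machinery entirely, which makes it more elementary and self-contained; the paper's route, on the other hand, yields the more general Theorem~\ref{thm_main_general} as a byproduct, valid for arbitrary semilattices rather than just face lattices of simplicial fans.

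Two remarks on your plan. First, a minor misattribution: showing that $\mathcal C$ (in Case~1) or $\mathcal C'$ (in Case~2) remains geometrically nested for the shorter list is \emph{trivial}, since $\langle\mathcal S\setminus\{\sigma_r\}\rangle\subseteq\langle\mathcal S\rangle$ regardless of minimality. Minimality of $\sigma_r$ is used elsewhere: to guarantee that $\sigma_r$ survives as a cone of $\Sigma_{r-1}$, and in the step below. Second, the genuine content of Case~2 --- which you correctly flag as the obstacle but do not resolve --- is showing $\tau+\sigma_r\in\Sigma_{r-1}$. The clean way to do this is a \emph{second} application of the inductive hypothesis, this time to the augmented collection $\mathcal C'':=\mathcal C'\cup\{\text{rays of }\sigma_r\}$. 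To verify $\mathcal C''$ is geometrically nested for $\mathcal S':=\mathcal S\setminus\{\sigma_r\}$, take incomparable $\mathcal H\subseteq\mathcal C''$ with $|\mathcal H|\ge 2$ and split $\mathcal H=\mathcal H_0\sqcup\mathcal H_\sigma$ with $\mathcal H_\sigma$ the rays of $\sigma_r$ in $\mathcal H$. If $\mathcal H_\sigma=\emptyset$ one is done; if $\mathcal H_0=\emptyset$ then $\sum_{\mathcal H}\preceq\sigma_r$, and minimality of $\sigma_r$ in $\mathcal S$ forces $\sum_{\mathcal H}\notin\langle\mathcal S'\rangle$. In the mixed case, incomparability forces every element of $\mathcal H_0$ to be incomparable to $\sigma_r$, so $\{\sigma_r\}\cup\mathcal H_0\subseteq\mathcal C$ is incomparable and nestedness of $\mathcal C$ gives $\sigma_r+\sum_{\mathcal H_0}\in\Sigma_0\setminus\langle\mathcal S\rangle$; now if $\sum_{\mathcal H}=\sum_{\mathcal T}$ for some conjunct $\mathcal T\subseteq\mathcal S'$, then $\{\sigma_r\}\cup\mathcal T$ is conjunct (they share a ray from $\mathcal H_\sigma$) and $\sigma_r+\sum_{\mathcal T}=\sigma_r+\sum_{\mathcal H_0}\in\langle\mathcal S\rangle$, a contradiction. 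This is precisely where the conjunctness condition does its work.
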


We will prove this using the technique of combinatorially blowing up elements in a semilattice developed by Feichtner and Kozlov in \cite{incidence_combinatorics}. For this we introduce some notation. Let $(\mathcal L,\le)$ be a finite {\it (meet)-semilattice}, i.e. a finite partially ordered set such that any non-empty subset $\mathcal X\subseteq\mathcal L$ posesses a greatest lower bound $\bigwedge \mathcal X$ called {\it meet}. Any meet-semilattice has a unique minimal element $0$. Moreover, for a subset $\mathcal X\subseteq \mathcal L$ the set 
$\{z\in\mathcal L;~z\ge x\text{ for all }x\in \mathcal X\}$
is either empty or has a unique minimal element $\bigvee \mathcal X$ called {\it join}. For $y\in\mathcal L$ we denote $\mathcal X_{\le y}:=\{x\in \mathcal X;~x\le y\}$, and finally, the semilattice $\mathcal L$ is called {\it distributive} if the equation $x\wedge(y\vee z)=(x\wedge y)\vee (x\wedge z)$ holds for any $x,y,z\in\mathcal L$.

We now turn to blow-ups of semilattices in the sense of \cite[Definition~3.1]{incidence_combinatorics}. The {\it blow-up} of $(\mathcal L,\ge)$ in an element $\xi\in\mathcal L$ is the 
semilattice $\mathrm{Bl}_{(\xi)}(\mathcal L)$ consisting of
\[
 x\in\mathcal L\text{ with }x\not\ge\xi
 \qquad\text{and}\qquad
 (\xi,x)\text{ where }\mathcal L\ni x\not\ge \xi\text{ and }x\vee \xi\text{ exists}.
\]
The order relation $>_{\mathrm{Bl}}$ of the blow-up is given by
\[
 x>_{\mathrm{Bl}}y\;\;\text{ if }\;\;x>y,\qquad
 (\xi,x)>_{\mathrm{Bl}}(\xi,y)\;\;\text{ if }\;\;x>y,\qquad
 (\xi,x)>_{\mathrm{Bl}} y\;\;\text{ if }\;\;x\ge y,
\]
where in all three cases $x,y\not\ge \xi$ holds.

We now want to iterate this process. Let $\mathcal G=(\xi_1,\ldots, \xi_r)$ be a family of elements $\xi_i\in\mathcal L$. The blow-up of $\mathcal L$ in $\mathcal G$ is simply the subsequent blow-up of $\mathcal L$ in the elements $\xi_i$ in order of ascending indices. When we speak of a subfamily $(\xi_{i_1},\ldots \xi_{i_s})$ of $\mathcal G$ we always tacitly assume, that the order is preserved, i.e. that $j<k$ implies $i_j<i_k$. We call $\mathcal G$  {\it sorted} if $\xi_i>\xi_j$ implies $i<j$. Moreover, we denote the underlying set of the family $\mathcal G$ by $\mathcal S_\mathcal G$.

The subset $\mathcal S\subseteq\mathcal L\setminus\{0\}$ is a {\it building set} for $\mathcal L$, if for every $x\in\mathcal L\setminus \{0\}$ the interval $\{y\in\mathcal L;\ 0\le y\le x\}$ as a poset decomposes into a product of certain smaller intervals given by elements of $\mathcal S$. For a precise definition see \cite[Definition~2.2]{incidence_combinatorics}. For two subsets $\mathcal C\subseteq\mathcal S\subseteq\mathcal L$ we call $\mathcal C$ {\it nested} (in $\mathcal S$) if for any subset $\mathcal H\subseteq \mathcal C$ of pairwise incomparable elements with $|\mathcal H|\ge 2$ the join $\bigvee\mathcal H$ exists and does not lie in $\mathcal S$. Note that the collection of nested sets forms an abstract simplicial complex $\mathfrak C(\mathcal S)$ with vertex set $\mathcal S$.
\begin{theorem}[{\cite[Theorem~3.4]{incidence_combinatorics}}]
\label{thm:feichtner_kozlov}
Assume that $\mathcal G$ is a sorted familiy in the semilattice $\mathcal L$ such that the underlying set $\mathcal S_\mathcal G$ is a building set. Then we have an isomorphism of posets
\[
 \mathfrak C(\mathcal S_\mathcal G)\to\mathrm{Bl}_{\mathcal G}(\mathcal L);
 \qquad
 \mathcal C\mapsto \bigvee_{\xi\in \mathcal C}(\xi,0).
 \]
\end{theorem}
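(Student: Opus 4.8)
To prove this theorem of Feichtner and Kozlov, the plan is to reconstruct their argument: induct on the length $r$ of the sorted family $\mathcal G=(\xi_1,\dots,\xi_r)$ and build $\mathrm{Bl}_{\mathcal G}(\mathcal L)$ one combinatorial blow-up at a time. For $r=0$ the hypothesis that $\mathcal S_\mathcal G=\emptyset$ be a building set forces $\mathcal L=\{0\}$, and there is nothing to prove. For the inductive step, the first point is that, $\mathcal G$ being sorted, $\xi_1$ is a maximal element of the building set $\mathcal S_\mathcal G$; hence every other $\xi_i$ satisfies $\xi_i\not\ge\xi_1$ and therefore survives unchanged in $\mathcal L':=\mathrm{Bl}_{(\xi_1)}(\mathcal L)$.

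I would then establish three facts. (a) $\mathcal L'$ is again a finite meet-semilattice; this is a routine unravelling of the order $>_{\mathrm{Bl}}$, the only thing to do being to exhibit meets of the new elements $(\xi_1,x)$. (b) The set $\mathcal S'$ obtained from $\mathcal S_\mathcal G$ by replacing $\xi_1$ with the new atom $(\xi_1,0)$ --- together, if necessary, with the images $(\xi_1,\eta)$ of the $\eta\in\mathcal S_\mathcal G$ for which $\eta\vee\xi_1$ exists --- is a building set for $\mathcal L'$, with a sorted listing $\mathcal G'$ obtained by inserting the new members into the tail $(\xi_2,\dots,\xi_r)$ as prescribed by $>_{\mathrm{Bl}}$. (c) The assignment sending a nested set $\mathcal C$ to itself if $\xi_1\notin\mathcal C$ and to $(\mathcal C\setminus\{\xi_1\})\cup\{(\xi_1,0)\}$ if $\xi_1\in\mathcal C$ is an isomorphism of posets $\mathfrak C(\mathcal S_\mathcal G)\to\mathfrak C(\mathcal S')$. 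Given (a)--(c) one has $\mathrm{Bl}_{\mathcal G}(\mathcal L)=\mathrm{Bl}_{\mathcal G'}(\mathcal L')$, which by the inductive hypothesis is isomorphic to $\mathfrak C(\mathcal S')$ and hence, by (c), to $\mathfrak C(\mathcal S_\mathcal G)$; it remains to recognize this composite as the map $\mathcal C\mapsto\bigvee_{\xi\in\mathcal C}(\xi,0)$, which amounts to the remark that in $\mathcal L'$ the atom $(\xi_1,0)$ plays for the blow-up at $\xi_1$ the same role that the exceptional atom $(\eta,0)$ plays for a later blow-up at $\eta$, so the two formulas for the relevant joins coincide.

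The main obstacle is (b): a single combinatorial blow-up of a semilattice equipped with a building set must again carry a building set. Here I would first describe how $\mathrm{Bl}_{(\xi_1)}$ acts on lower intervals of $\mathcal L$ --- an interval $[0,y]$ with $y\not\ge\xi_1$ is left alone, whereas the intervals that did contain $\xi_1$ get ``opened up'' by the exceptional atom --- and then verify the product-decomposition property of a building set for $\mathcal L'$ by transporting it from the corresponding property in $\mathcal L$, the maximality of $\xi_1$ in $\mathcal S_\mathcal G$ being exactly what legitimizes the transport. Step (c) is of the same nature but easier, the only subtle point being the status of $\xi_1$ itself: by maximality it can only occur as a join of elements of $\mathcal S_\mathcal G$ all strictly below it, and one must check that the antichains realizing such joins are precisely the ones that cease to be nested after the blow-up. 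With (a)--(c) secured, the induction goes through and exhibits $\mathrm{Bl}_{\mathcal G}(\mathcal L)$ as the poset $\mathfrak C(\mathcal S_\mathcal G)$ of nested sets.
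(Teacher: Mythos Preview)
The paper does not prove this statement: Theorem~\ref{thm:feichtner_kozlov} is quoted verbatim from \cite[Theorem~3.4]{incidence_combinatorics} and used as a black box, so there is no argument in the paper against which to compare your attempt.

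That said, your sketch is in the spirit of the original Feichtner--Kozlov proof, but the inductive bookkeeping in step~(b) is off. The correct replacement building set after blowing up at the maximal element $\xi_1$ is simply $(\mathcal S_{\mathcal G}\setminus\{\xi_1\})\cup\{(\xi_1,0)\}$; no further elements of the form $(\xi_1,\eta)$ are to be adjoined. If you do insert such extra members, as your parenthetical suggests, then $|\mathcal G'|>|\mathcal G|$ and the induction on $r$ fails to terminate. Even with the correct $\mathcal S'$ one has $|\mathcal S'|=r$, not $r-1$, so a straight induction on the length still does not close; what actually decreases is the number of building-set elements that are \emph{not} atoms of the ambient semilattice (the new element $(\xi_1,0)$ is an atom of $\mathcal L'$, and blowing up at an atom is only a relabelling). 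Once you recast the induction on that quantity and drop the spurious $(\xi_1,\eta)$'s, the scheme you outline in (a)--(c) goes through and coincides with the argument in \cite{incidence_combinatorics}.
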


We now describe a suffient criterion to test whether an element lies in $\mathrm{Bl}_\mathcal F(\mathcal L)$ in the case where $\mathcal S_\mathcal F$ is not a building set.

\begin{theorem}
\label{thm_main_general}
Let $\mathcal F$ be a sorted family in $\mathcal L$ and consider a subset $\mathcal C$ of the underlying set $\mathcal S_\mathcal F$. If there exists a building set $\mathcal S$ of $\mathcal L$ with $\mathcal S_\mathcal F\subseteq\mathcal S$ such that $\mathcal C$ is nested in $\mathcal S$, then $\bigvee_{\xi\in\mathcal C}(\xi,0)$ exists in the blow-up $\mathrm{Bl}_{\mathcal F}(\mathcal L)$.
\end{theorem}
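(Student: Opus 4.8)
The plan is to reduce to the building‑set case covered by Theorem~\ref{thm:feichtner_kozlov} and then transport the resulting element down to $\mathrm{Bl}_\mathcal F(\mathcal L)$.

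First I would extend $\mathcal F$ to a sorted family $\mathcal G$ with underlying set $\mathcal S_\mathcal G=\mathcal S$ in such a way that $\mathcal F$ is an order‑preserving subfamily of $\mathcal G$. This is possible: a sorted family with prescribed underlying set is the same datum as a linear extension of the opposite order on that set, and any linear extension of an induced subposet of a finite poset extends to a linear extension of the whole poset (adjoining to $\mathcal L$ the relations coming from the given extension of the subposet produces no directed cycle, since a minimal one would alternate between the two kinds of relations, and this forces every relation of the first kind to have both endpoints in the subposet, contradicting minimality). Since $\mathcal S$ is a building set, Theorem~\ref{thm:feichtner_kozlov} provides an isomorphism $\mathfrak C(\mathcal S)\xrightarrow{\ \sim\ }\mathrm{Bl}_\mathcal G(\mathcal L)$, $\mathcal C'\mapsto\bigvee_{\xi\in\mathcal C'}(\xi,0)$. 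Because $\mathcal C$ is nested in $\mathcal S$ it is a face of $\mathfrak C(\mathcal S)$, so $\zeta:=\bigvee_{\xi\in\mathcal C}(\xi,0)$ exists in $\mathrm{Bl}_\mathcal G(\mathcal L)$; and since $\eta\notin\mathcal C$ for every $\eta\in\mathcal S\setminus\mathcal S_\mathcal F$ (as $\mathcal C\subseteq\mathcal S_\mathcal F$), the same isomorphism shows $\zeta\not\geq(\eta,0)$ for all such $\eta$.

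Next I would pass from $\mathrm{Bl}_\mathcal G(\mathcal L)$ to $\mathrm{Bl}_\mathcal F(\mathcal L)$ by deleting the members of $\mathcal S\setminus\mathcal S_\mathcal F$ from $\mathcal G$ one at a time, obtaining a chain of sorted families $\mathcal G=\mathcal G_0,\dots,\mathcal G_m=\mathcal F$, and then show by induction on $t$ that (the image of) $\zeta$ remains the join $\bigvee_{\xi\in\mathcal C}(\xi,0)$ in each $\mathrm{Bl}_{\mathcal G_t}(\mathcal L)$. The point is that $\zeta$ is a join of new elements $(\xi,0)$ with $\xi$ different from the deleted element $\eta$, and that $\zeta\not\geq(\eta,0)$, so deleting the blow‑up at $\eta$ leaves both $\zeta$ and the upper bounds witnessing its universal property ``supported away'' from that blow‑up; the verification uses only the explicit order relations of an iterated combinatorial blow‑up, namely $(\xi,x)>(\xi,y)\iff x>y$ and $(\xi,x)>y\iff x\geq y$, together with the observation that, $\mathcal G$ being sorted, every blow‑up centre of $\mathcal F$ survives each deletion unchanged. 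A variant of the same bookkeeping (an induction on $|\mathcal F|$, splitting according to whether the last centre of $\mathcal F$ lies in $\mathcal C$, and carried out with a strengthened inductive statement that also allows $\mathcal C$ to contain centres which are not blown up) gives an alternative, more self‑contained route to the result; there the nestedness of $\mathcal C$ is used to rule out the degenerate configurations in which such a join could fail to survive a blow‑up.

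The main obstacle is precisely this descent. Deleting a member from the \emph{middle} of the sorted family $\mathcal G$ is not the same as performing one fewer combinatorial blow‑up at the end—blow‑ups at a cone and at a proper face of it do not commute, so $\mathcal G$ generally cannot be reordered to put the deleted element last—and hence one cannot simply black‑box Theorem~\ref{thm:feichtner_kozlov}. The work is to argue at the level of the combinatorial structure itself, essentially by setting up a combinatorial ``blow‑down'' inverse to the Feichtner--Kozlov blow‑up and checking that it is well defined and carries joins of elements not meeting the blown‑down element to joins. The nestedness hypothesis enters exactly here: via the identification with $\mathfrak C(\mathcal S)$ it guarantees both that $\zeta$ exists upstairs and that it avoids every extra new element, which is what lets the descent proceed without creating or destroying the join along the way.
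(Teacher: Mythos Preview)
Your descent strategy is natural, but the crucial step---passing from $\mathrm{Bl}_{\mathcal G_t}(\mathcal L)$ to $\mathrm{Bl}_{\mathcal G_{t+1}}(\mathcal L)$ by deleting one centre $\eta\in\mathcal S\setminus\mathcal S_{\mathcal F}$---is not carried out, and this is where the real difficulty lies. Removing a blow-up from the interior of a sorted family is a nonlocal operation: the iterated blow-ups $\mathrm{Bl}_{\mathcal G_t}(\mathcal L)$ and $\mathrm{Bl}_{\mathcal G_{t+1}}(\mathcal L)$ agree only up to the position of $\eta$, and after that point every subsequent blow-up is performed in a different ambient semilattice. There is no general order-preserving ``blow-down'' $\mathrm{Bl}_{\mathcal G_t}(\mathcal L)\to\mathrm{Bl}_{\mathcal G_{t+1}}(\mathcal L)$ to invoke, and the rules you quote ($(\xi,x)>y\iff x\ge y$, etc.) govern only a single blow-up, not its interaction with the later ones. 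Knowing that $\zeta\not\ge(\eta,0)$ in $\mathrm{Bl}_{\mathcal G}(\mathcal L)$ does not by itself tell you that the corresponding nested tuple is even an element of $\mathrm{Bl}_{\mathcal G_{t+1}}(\mathcal L)$, let alone a join there. Filling this gap would force you to track the candidate join through every intermediate blow-up anyway, which is exactly what the direct argument does---so the detour through the larger building family buys nothing.

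The paper in fact takes (essentially) your ``variant'': a forward induction on the number $k$ of blow-ups already performed in $\mathcal F$. Writing $\mathcal C=\{\xi_{i_1},\dots,\xi_{i_s}\}$ in the order inherited from $\mathcal F$, one shows that the mixed join
\[
z_k\ =\ \bigvee_{j\le j(k)}(\xi_{i_j},0)\ \vee\ \bigvee_{j>j(k)}\xi_{i_j}
\]
exists in the $k$-th intermediate blow-up $\mathcal L_k$, where $j(k)$ counts how many members of $\mathcal C$ have already been blown up. The inductive step splits on whether $\xi_{k+1}\in\mathcal C$; in both cases the point is to verify that the relevant element is $\not\ge\xi_{k+1}$, and this is precisely where the building-set hypothesis on $\mathcal S$ enters. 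One unwinds $z_k$ via the join rules down to the tail $\zeta_k=\bigvee_{j>j(k)}\xi_{i_j}$ in $\mathcal L$ and uses that $\max(\mathcal S_{\le\zeta_k})=\max\{\xi_{i_{j(k)+1}},\dots,\xi_{i_s}\}$ (a standard consequence of $\mathcal S$ being a building set) together with sortedness of $\mathcal F$ to rule out $\zeta_k\ge\xi_{k+1}$. So the ``strengthened inductive statement that also allows $\mathcal C$ to contain centres which are not blown up'' that you mention in passing \emph{is} the actual proof; the descent from a larger building family is never needed.
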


Before we enter the proof of the Theorem we consider an example. Furthermore, for distributive $\mathcal L$ we provide an explicit construction of such a building set in the case where $\mathcal S_\mathcal F$ generates $\mathcal L$ by $\vee$, see Construction~\ref{cst:building_from_set}, Lemma~\ref{lem_buildset_from_harmonious}.

\begin{example}
The face poset of a polyhedral fan is a semilattice in which the stellar subdivision in a ray $\nu\in\sigma^\circ$ corresponds to the blow-up of the element $\sigma$, see \cite[Proposition 4.9]{incidence_combinatorics}. Viewing the positive orthant $\Sigma:=\QQ^3_{\ge 0}$ as a fan, we ask for the combinatoric structure of its stellar subdivisions $\Sigma_1$ and $\Sigma_2$ in the sorted families
\[
 \mathcal G_1\;:=\;(\nu_1,\nu_2,e_1,e_2,e_3),\qquad 
 \mathcal G_2\;:=\;(\nu_2,\nu_1,e_1,e_2,e_3),
 \]
 \[
 \text{where}
 \quad
 \nu_1\;:=\;(1,1,0),\quad
 \nu_2\;:=\;(0,1,1). 
\]
If $\mathcal G_1$ and $\mathcal G_2$ were building sets, then Theorem~\ref{thm:feichtner_kozlov} would imply that the fans $\Sigma_1$ and $\Sigma_2$ coincide. Clearly, this is not the case.
\[
\begin{array}{cc}
 \blowupi
 \qquad
 &
 \qquad
 \blowupii
 \\
 {\Sigma_1} \qquad&\qquad {\Sigma_2} 
\end{array}
\]
We now add to $\mathcal G_1$ and $\mathcal G_2$ a ray lying in the relative interior of the join of the faces $\mathrm{cone}(e_1,e_2)$ and $\mathrm{cone}(e_2,e_3)$, e.g. $\nu_0=(1,1,1)$. This yields two building sets
\[
 \mathcal G_{1a}\;:=\;(\nu_0,\nu_1,\nu_2,e_1,e_2,e_3),\qquad 
 \mathcal G_{2a}\;:=\;(\nu_0,\nu_2,\nu_1,e_1,e_2,e_3).
 \]
Both families give rise to the same subdivided fan. Note that the faces of $\Sigma_{1a}=\Sigma_{2a}$ not having $\nu_0$ as a ray lie in both $\Sigma_1$ and $\Sigma_2$. This is essentially the idea of the proof of Proposition~\ref{pro_main_fan}.
\[
\begin{array}{c}
 \blowupiii 
 \\
 {\Sigma_{1a}=\Sigma_{2a}}
\end{array}
\]
\end{example}


\begin{definition}
 Let $\mathcal S=\{\xi_1,\ldots,\xi_r\}$ be a subset of the semilattice $\mathcal L$. We call a (non-ordered) pair $\{\xi_i,\xi_j\}$ {\it harmonious} (with respect to $\mathcal S$) if at least one of the following conditions is satisfied:
 \[
   \xi_i\wedge\xi_j=0
   \qquad\text{or}\qquad
   \xi_i\vee\xi_j\text{ does not exist}
   \qquad\text{or}\qquad
   \xi_i\vee\xi_j\in \mathcal S.
 \]
\end{definition}

\begin{construction}
\label{cst:building_from_set}
 Let $\mathcal S=\{\xi_1,\ldots,\xi_r\}$ be a subset of $\mathcal L$. For all pairs of non-harmonious elements $\{\xi_i,\xi_j\}$ we add to $\mathcal S$ the element $\xi_i\vee\xi_j$:
 \[
  \mathcal S'\quad:=\quad\mathcal S\ \cup\ \{\xi_i\vee\xi_j;\ \{\xi_i,\xi_j\}\text{ non-harmonious with respect to }\mathcal S\}.
 \]
 We continue this process with the new set $\mathcal S'$ instead of $\mathcal S$ until all pairs are harmonious and denote the final set by $\langle\!\langle \mathcal S\rangle\!\rangle$. Since $\mathcal L$ is finite, clearly this process terminates after finitely many steps. 
\end{construction}

\begin{lemma}
\label{lem_buildset_from_harmonious}
Assume that $\mathcal L$ is distributive and a subset $\mathcal S\subseteq\mathcal L\setminus\{0\}$ generates it by $\vee$. Then the following assertions hold.
\begin{enumerate}
 \item If for any $x\in\mathcal L$ and distinct $\xi_i,\xi_j\in\max (\mathcal S_{\le x})$ their meet $\xi_i\wedge \xi_j$ equals $0$, then $\mathcal S$ is a building set for $\mathcal L$.
 \item The set $\langle\!\langle\mathcal S\rangle\!\rangle$ is a building set for $\mathcal L$.
 \end{enumerate}
\end{lemma}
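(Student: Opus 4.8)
The plan is to prove both assertions by reducing to the definition of a building set, using distributivity to control how intervals $[0,x]$ decompose. For part (i), fix $x \in \mathcal L$ and let $\xi_{i_1},\ldots,\xi_{i_s}$ be the maximal elements of $\mathcal S_{\le x}$. Since $\mathcal S$ generates $\mathcal L$ by $\vee$, we have $x = \xi_{i_1} \vee \cdots \vee \xi_{i_s}$. First I would show that the map
\[
 [0,\xi_{i_1}] \times \cdots \times [0,\xi_{i_s}] \ \to\ [0,x],
 \qquad
 (y_1,\ldots,y_s) \ \mapsto\ y_1 \vee \cdots \vee y_s,
\]
is an isomorphism of posets. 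Injectivity and the fact that it is order-preserving in both directions follow from distributivity once one knows $\xi_{i_k} \wedge (\bigvee_{l \ne k} \xi_{i_l}) = 0$; the hypothesis gives this pairwise, namely $\xi_{i_k} \wedge \xi_{i_l} = 0$ for $k \ne l$, and an induction on $s$ using distributivity upgrades pairwise disjointness to $\xi_{i_k} \wedge (\bigvee_{l\ne k}\xi_{i_l}) = 0$. Surjectivity: given $y \le x$, set $y_k := y \wedge \xi_{i_k}$; then distributivity gives $y_1 \vee \cdots \vee y_s = y \wedge (\xi_{i_1}\vee\cdots\vee\xi_{i_s}) = y \wedge x = y$. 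This exhibits $[0,x]$ as the required product of intervals $[0,\xi_{i_k}]$ with $\xi_{i_k}\in\mathcal S$, which is exactly the defining property of a building set (one checks the factors cannot be further refined, or simply invokes the product decomposition as sufficient per \cite[Definition~2.2]{incidence_combinatorics}).

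For part (ii), I would verify that $\langle\!\langle \mathcal S\rangle\!\rangle$ satisfies the hypothesis of part (i). Note first that $\langle\!\langle \mathcal S\rangle\!\rangle$ still generates $\mathcal L$ by $\vee$, since it contains $\mathcal S$. Now fix $x\in\mathcal L$ and let $\xi,\xi'$ be two distinct maximal elements of $\langle\!\langle\mathcal S\rangle\!\rangle_{\le x}$. I claim $\xi\wedge\xi' = 0$. Suppose not. Then $\xi\vee\xi'$ exists (it is $\le x$), so by the terminating property of Construction~\ref{cst:building_from_set} the pair $\{\xi,\xi'\}$ is harmonious with respect to $\langle\!\langle\mathcal S\rangle\!\rangle$; since $\xi\wedge\xi'\ne 0$ and $\xi\vee\xi'$ exists, harmoniousness forces $\xi\vee\xi'\in\langle\!\langle\mathcal S\rangle\!\rangle$. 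But $\xi\vee\xi' \le x$ and $\xi\vee\xi' > \xi$ (as $\xi\ne\xi'$ and they are incomparable by maximality — here I need that two distinct maximal elements of a down-set are incomparable), contradicting the maximality of $\xi$ in $\langle\!\langle\mathcal S\rangle\!\rangle_{\le x}$. Hence all pairwise meets of maximal elements of $\langle\!\langle\mathcal S\rangle\!\rangle_{\le x}$ vanish, and part (i) applies to conclude $\langle\!\langle\mathcal S\rangle\!\rangle$ is a building set.

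The main obstacle I anticipate is the surjectivity-and-decomposition step in part (i): one must be careful that distributivity of the finite semilattice $\mathcal L$ is enough to justify both the identity $y\wedge(\bigvee \xi_{i_k}) = \bigvee(y\wedge\xi_{i_k})$ for arbitrarily many terms (a straightforward induction, but it must be spelled out) and the disjointness propagation $\xi_{i_k}\wedge\bigvee_{l\ne k}\xi_{i_l}=0$. A secondary subtlety is matching the product-of-intervals condition to whatever precise form of the building-set definition from \cite[Definition~2.2]{incidence_combinatorics} is in force — in particular whether one needs the factors to themselves be "irreducible'' in $\mathcal S$, which for part (i) one gets for free because each $\xi_{i_k}$ is required to lie in $\mathcal S$, and maximality prevents a coarser decomposition. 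Everything else is bookkeeping with the poset operations.
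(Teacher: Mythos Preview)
Your proposal is correct. Part~(ii) is essentially identical to the paper's argument: both observe that any two distinct maximal elements $\xi,\xi'$ of $\langle\!\langle\mathcal S\rangle\!\rangle_{\le x}$ form a harmonious pair whose join exists (being $\le x$) but cannot lie in $\langle\!\langle\mathcal S\rangle\!\rangle$ by maximality, forcing $\xi\wedge\xi'=0$, and then apply part~(i).

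For part~(i) you take a different route. The paper does not construct the product decomposition directly; instead it invokes the equivalent characterisation of building sets in \cite[Proposition~2.3~(4)]{incidence_combinatorics} and verifies its two conditions: first, that for $\{y,y_1,\ldots,y_t\}\subseteq\max(\mathcal S_{\le x})$ one has $\mathcal S_{\le y}\cap\mathcal S_{\le y_1\vee\cdots\vee y_t}=\emptyset$ (which follows from $y\wedge(y_1\vee\cdots\vee y_t)=0$ via distributivity, exactly the disjointness-propagation step you also identified), and second, that $z<y$ implies $z\vee y_1\vee\cdots\vee y_t < y\vee y_1\vee\cdots\vee y_t$. Your approach instead builds the poset isomorphism $[0,\xi_{i_1}]\times\cdots\times[0,\xi_{i_s}]\to[0,x]$ by hand, which is more self-contained since it does not rely on the equivalence in \cite{incidence_combinatorics}; the paper's route is shorter given that citation. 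Your worry about irreducibility of the factors is unnecessary: Definition~2.2 in \cite{incidence_combinatorics} only asks that $[0,x]$ decompose as the product over $\max(\mathcal S_{\le x})$, which is precisely what you establish.
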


\begin{proof}
For the proof of (i) we check the two conditions of \cite[Proposition~2.3~(4)]{incidence_combinatorics}. Fix an $x\in\mathcal L$ and a subset $\{y,y_1,\ldots,y_t\}\subseteq\mathrm{max}\,(\mathcal S_{\le x})$. By assumption we have
  \[
   0=(y\wedge y_1)\vee\ldots\vee(y\wedge y_t)=y\wedge(y_1\vee\ldots\vee y_t). 
  \]
Since $0\not\in \mathcal S$ holds, this implies $\mathcal S_{\le y}\,\cap\,\mathcal S_{\le y_1\vee\ldots\vee y_t}=\emptyset$. For the second condition let $z<y$. Clearly $z\vee y_1\vee\ldots \vee y_t\le y\vee y_1\vee\ldots \vee y_t$ holds. If they were equal, so would be the respective meets with $y$ and this would imply $z=y$.
  
We now prove the second assertion (ii). By construction of $\langle\!\langle\mathcal S\rangle\!\rangle$, for any $x\in \mathcal L$ and $\xi_i,\xi_j\in\max\,(\langle\!\langle\mathcal S\rangle\!\rangle_{\le x})$ the pair $\{\xi_i,\xi_j\}$ is harmonious (with respect to $\langle\!\langle\mathcal S\rangle\!\rangle$). Its join exists but - by maximality of $\xi_i$ and $\xi_j$ - does not lie in $\langle\!\langle\mathcal S\rangle\!\rangle$. This implies that $\xi_i\wedge\xi_j=0$ holds and the assertion follows from (i).
\end{proof}

\begin{proof}[Proof of Theorem~\ref{thm_main_general}]
Before we enter the proof let us recall the join rules of blow-ups from \cite[Lemma~3.2]{incidence_combinatorics}. Let $x,y,\xi$ lie in the semilattice $\mathcal L$ and consider the blow-up $\mathcal L'$ of $\mathcal L$ in $\xi$. Then the join $(\xi,x)\vee_{\mathcal L'} y$ exists if and only if $x\vee_\mathcal Ly$ exists and $x\vee y\not\ge \xi$ holds. The join $x\vee_{\mathcal L'} y$ exist if and only if $x\vee_\mathcal Ly$ exists. In case the joins exist the following formulae hold
\[
(\xi,x)\vee_{\mathcal L'} y\ =\ (\xi,x\vee_{\mathcal L} y),
\qquad\qquad
\quad x\vee_{\mathcal L'} y\ =\ x \vee_{\mathcal L} y.
\]
We turn back to our case and fix some notation. We write $\mathcal F=(\xi_1,\ldots,\xi_r)$ and denote the  elements lying in $\mathcal C$ by $\xi_{i_j},\ j=1,\ldots,s$ where we assume that the order is preserved, i.e. $j<j'$ is equivalent to $i_{j}<i_{j'}$. Moreover, for $k=1,\ldots,r$ let $\mathcal L_k$ be the blow-up of $\mathcal L$ in $(\xi_1,\ldots,\xi_k)$ and for consistency we set $\mathcal L_0:=\mathcal L$. In $\mathcal L_k$ we consider the following (a priori non-existent) join 
\[
 \bigvee_{j=1}^{j(k)}(\xi_{i_j},0)\ \vee\ \bigvee_{j=j(k)+1}^s\xi_{i_j},
 \quad
 \text{where}
 \quad
 j(k):=\max(\{0\}\cup\{j;\ i_j\le k\}).
\]
In case this join does exist, we denote it by $z_k$. Note that from the definition of $j(k)$ it follows that $i_{j(k)}$ is the largest index, such that $\xi_{i_1},\ldots,\xi_{i_{j(k)}}$ are among the $\xi_1,\ldots,\xi_k$. We prove the existence of $z_r=\bigvee_{\xi\in \mathcal C}(\xi,0)$ by induction on $k$. Since $\mathcal C$ is nested, it is clear that $z_0=\bigvee\mathcal C$ does exist in $\mathcal L_0$. Now assume that $z_k\in\mathcal L_k$ exists. We discriminate two possible cases: In the first case $\xi_{k+1}$ does not lie in $\mathcal C$ in the second case it does.

Assume that $\xi_{k+1}\notin\mathcal C$ holds and note that this is equivalent to $j(k)=j(k+1)$. Hence, as elements in $\mathcal L_k$ we have $z_{k+1}=z_k$ and the only thing to check is that $z_k\not\ge \xi_{k+1}$ holds. For this note that the iterated application of the above join rules shows that
\begin{align*}
 z_k&=(\xi_{i_{j(k)}},0)\vee\left(\bigvee_{j=1}^{j(k)-1}(\xi_{i_j},0)\vee\zeta_k\right)
 =\left(\xi_{i_{j(k)}},\bigvee_{j=1}^{j(k)-1}(\xi_{i_j},0)\vee\zeta_k\right)\\
 &=\ldots=\left(\xi_{i_{j(k)}},\left(\ldots \left(\xi_{i_1},\zeta_k\right)\ldots \right)\right)
 \qquad\text{where }\zeta_k:=\bigvee_{j=j(k)+1}^s\xi_{i_j}.
\end{align*}
If we had $z_k\ge \xi_{k+1}$, then this would mean $(\xi_{i_{j(k)-1}},(\ldots (\xi_{i_1},\zeta_k)\ldots ))\ge \xi_{k+1}$. Iterating this argument we would get $\zeta_k\ge \xi_{k+1}$ in $\mathcal L_0$ which would imply $\xi_{k+1}\in\mathcal S_{\le \zeta_k}$. Since $\mathcal S$ is a building set, by \cite[Proposition~2.8~(2)]{incidence_combinatorics}
\[
 \max(\mathcal S_{\le \zeta_k})=\max(\xi_{i_j},~j=j(k)+1,\ldots,s)
\]
holds. Hence there must exist $j_0\ge j(k)+1$ with $\xi_{k+1}\le\xi_{i_{j_0}}$. Since $\xi_{k+1}\notin\mathcal C$ holds, we have $\xi_{k+1}\not=\xi_{j_0}$. In particular, this implies $k>i_{j_0}-1\ge i_{j(k)+1}-1$. However, from the definition of $j(k)$ we easily see that $k\le i_{j(k)+1}-1$ holds, a contradiction.

We turn to the second case where $\xi_{k+1}\in\mathcal C$ holds which is equivalent to $j(k)+1=j(k+1)$. In $\mathcal L_k$ we consider the element
\[
  y_k\ :=\ \bigvee_{j=1}^{j(k)}(\xi_{i_j},0)\ \vee\ \bigvee_{j=j(k)+2}^s\xi_{i_j}.
\]
Since $z_k$ exists, it follows that also $y_k$ and the join $\xi_{k+1}\vee y_k$ exist. Then the last thing to show is that $y_k\not\ge \xi_{k+1}$ holds. This follows from the same argument as above with $y_k$ instead of $z_k$.
\end{proof}

\begin{proof}[Proof of Proposition~\ref{pro_main_fan}]
First note that since $\Sigma_0$ is simplicial so is the iterated stellar subdivision $\Sigma_r$. In particular, the further application of stellar subdivisions in the original rays $\mathcal V$ leaves $\Sigma_r$ unchanged. From \cite[Proposition~4.9]{incidence_combinatorics} we know that a stellar subdivision in a ray $\nu\in\sigma^\circ$ corresponds to the blow-up of the face poset of the original fan in $\sigma$. More precisely, as posets $\Sigma_r$ and $\mathrm{Bl}_{\mathcal F}(\Sigma_0)$ are isomorphic, where 
\[
\mathcal F:=(\sigma_1,\ldots,\sigma_r,v_1,\ldots, v_t),\qquad \mathcal V=\{v_1,\ldots,v_t\}.
\]
For the proof of the Proposition we now check the assumptions of Theorem~\ref{thm_main_general}. First note that $\Sigma_0$ is simplicial, hence it is distributive as a semilattice. Its joins and meets can be computed by taking convex geometric sums and intersections respectively. Also, with $\mathcal S=\{\sigma_1,\ldots,\sigma_r\}$ it is clear that $\mathcal V\cup\mathcal S$, the underlying set of $\mathcal F$, generates $\Sigma_0\setminus\{0\}$ by $+$. In particular, from Lemma~\ref{lem_buildset_from_harmonious} we infer that $\langle\!\langle \mathcal V\cup\mathcal S\rangle\!\rangle$ is a building set for $\Sigma_0$.
 
  Now note that $\langle\!\langle \mathcal V\cup\mathcal S\rangle\!\rangle\setminus \mathcal V$ equals $\langle\!\langle \mathcal S\rangle\!\rangle$ and from the respective constructions it follows that $\langle\!\langle \mathcal S\rangle\!\rangle\subseteq \langle \mathcal S\rangle$ holds. Together this means
 \[
  \Sigma_0\setminus\langle \mathcal S\rangle
  \ \subseteq\ 
  \Sigma_0\setminus \langle\!\langle \mathcal S\rangle\!\rangle
  \ =\ 
  \Sigma_0\setminus(\langle\!\langle \mathcal V\cup\mathcal S\rangle\!\rangle\setminus\mathcal V)
  \ =\ 
  (\Sigma_0\setminus\langle\!\langle \mathcal V\cup\mathcal S\rangle\!\rangle) \cup\mathcal V.
 \]
 Since $\mathcal C\subseteq\mathcal V\cup \mathcal S$ is geometrically nested, it follows that it is also nested in $\langle\!\langle \mathcal V\cup\mathcal S\rangle\!\rangle$ in the sense of semilattices. From Theorem~\ref{thm_main_general} we now know that $\bigvee_{c\in\mathcal C}(c,0)$ lies in $\mathrm{Bl}_\mathcal F(\Sigma_0)$. Under the above isomorphy $\Sigma_r\cong \mathrm{Bl}_\mathcal F(\Sigma_0)$ this means
 \[
  \mathrm{cone}(v,\nu_i;~v\in\mathcal C\cap \mathcal V,\,\sigma_i\in\mathcal C\cap\mathcal S)\in\Sigma_r.
 \]
\end{proof}

\section{The limit quotient as blow-up}
\label{sec:iterated_blow-up}
This section is devoted to the main result and its proof. As before, let $\overline{Y}\subseteq\bigwedge^2\KK^{n+1}$ be the affine cone over the Grassmannian $\mathrm{Gr}(2,n+1)$ and consider the torus $H=(\KK^*)^n$ acting on $\overline Y$ by 
\[
 h\cdot e_0\wedge e_i\,=\,h_i e_0\wedge e_i,\qquad h\cdot e_i\wedge e_j\,=\,h_ih_je_i\wedge e_j.
\]
We assert that the normalised limit quotient $\overline Y\nlimquot H$ normalises the following iterated blow-up of $\PP_1^{n-1}$. We set $N_2:=\{2,\ldots,n\}$ and consider a subset $A\subseteq N_2$ with at least two elements. Labeling by $T_2,S_2,\ldots,T_n,S_n$ the homogeneous coordinates of $\PP_1^{n-1}$ we associate to $A$ the subscheme of $\PP_1^n$ given by the ideal
\[
\big\langle
\,T_i^2,\ T_jS_k\,-\,T_kS_j;\ \ i,\,j,\,k\in A,\;j<k\,
\big\rangle.
\]
The collection $\mathcal X$ of corresponding subschemes $X_A$ comes with a partial order given by the schme-theorectic inclusions with $X_{N_2}$ being the minimal element. A {\it linear extension} of this partial order is a total order on $\mathcal X$ which is compatible with the partial order.

\begin{theorem}
\label{thm:blow-up}
Fix a linear extension of the partial order on $\mathcal X$. Then the normalised limit quotient $\overline Y\nlimquot H$ normalises the blow-up of $\PP_1^{n-1}$ in all the subschemes $X_A$ (i.e. their respective proper transforms) in ascending order.
\end{theorem}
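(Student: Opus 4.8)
The plan is to translate the statement, via the toric description of the normalised limit quotient at the end of Section~\ref{sec:nr_limit}, into an assertion about iterated stellar subdivisions of fans, and then to invoke the combinatorial blow-up criterion of Section~\ref{sec:combinatorics}. \emph{Reduction to a toric closure.} Being a Grassmannian cone, $\overline Y$ is factorial, so the last part of Section~\ref{sec:nr_limit} applies: with $r=\binom{n+1}{2}$, the weight matrix $Q$ carrying the columns $w_{0i}=e_i$, $w_{jk}=e_j+e_k$, and $\Sigma$ the GKZ-decomposition of a Gale dual $P$ of $Q$, the normalised limit quotient $\overline Y\nlimquot H$ is the normalisation of the closure $\overline W$ of $W:=(\overline Y\cap T)/H$ inside the toric variety $Z_\Sigma=\KK^r\nlimquot H$. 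I would first make $W$ explicit: using the $H$-action to normalise the $e_0$-Pl\"ucker coordinates to $1$ identifies $W$ with the configuration space $U/\mathbb G_a$ of $n$ pairwise distinct affine points modulo a common translation, embedded into $Z_\Sigma$ through the $\mathbb G_a$-invariant differences $x_a-x_b$. Normalising the translation so that the first point is fixed, $W$ becomes the complement in $\PP_1^{n-1}$ of the divisors $\{T_i=0\}$ and $\{T_jS_k-T_kS_j=0\}$, where $[T_i:S_i]$ are the coordinates of the $i$-th factor; in particular $\overline W$, formed in a coarse enough toric model, is all of $\PP_1^{n-1}$.

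\emph{The base model.} I would then exhibit a coarsening $\Sigma_0$ of $\Sigma$ --- a ``most degenerate'' chamber of the GIT-fan, read off from the explicit wall description in Theorem~\ref{thm:fan} in terms of two-block partitions --- whose toric variety $Z_{\Sigma_0}$ satisfies $\overline W^{Z_{\Sigma_0}}\cong\PP_1^{n-1}$; smoothness of $\PP_1^{n-1}$ together with normality reduces this to checking finiteness and birationality. This is the case $n=3$ of the theorem, recovering the $\mathrm{Gr}(2,4)$-Mumford quotient mentioned in the introduction.

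\emph{From stellar subdivisions to blow-ups of the $X_A$.} To each $A\subseteq N_2$ with $|A|\ge 2$ I would attach a cone $\sigma_A\in\Sigma_0$ so that $A\mapsto\sigma_A$ is inclusion-preserving. From the defining ideals one reads $X_A\subseteq X_B\iff B\subseteq A$, so the partial order on $\mathcal X$ is reverse inclusion of index sets, which matches the ``larger cone, earlier subdivision'' convention of Proposition~\ref{pro_main_fan} with the ascending blow-up order in the statement. The geometric input is that the orbit closure $V(\sigma_A)$ meets $\overline W$ (and at each stage the pertinent proper transform) in the subscheme of ideal $\langle T_i^2,\ T_jS_k-T_kS_j;\ i,j,k\in A,\ j<k\rangle$ --- the diagonal relations expressing collision of the cluster $\{1\}\cup A$, the squares accounting for the doubling intrinsic to the $\mathbb G_a$-quotient. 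Since blowing up commutes with restriction to such a well-positioned $\overline W$, the toric model of the iterated blow-up of $\PP_1^{n-1}$ in the $X_A$ is the iterated stellar subdivision of $\Sigma_0$ in rays $\nu_A\in\sigma_A^\circ$. That this subdivision is exactly $\Sigma$ is then the job of Proposition~\ref{pro_main_fan}: with $\mathcal V$ the rays of $\Sigma_0$ and $\mathcal S=\{\sigma_A\}$, one verifies that a collection $\mathcal C\subseteq\mathcal V\cup\mathcal S$ is geometrically nested exactly when $\mathrm{cone}(v,\nu_A;\ v\in\mathcal C\cap\mathcal V,\ \sigma_A\in\mathcal C\cap\mathcal S)$ is a cone of $\Sigma$, the nested condition translating into the compatibility of the index sets $A$ that governs the cells of the GKZ-decomposition, itself dual to the $\overline Y$-sets and chambers of Proposition~\ref{pro:ffaces} and Theorem~\ref{thm:fan}. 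As Proposition~\ref{pro_main_fan} is insensitive to the linear extension chosen, this proves the theorem for every linear extension.

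\emph{Main obstacle.} I expect the hard part to be establishing the dictionary between the index sets $A$, the cones $\sigma_A\in\Sigma_0$, the centres $X_A$, and the chambers of $\Lambda_H(\overline Y)$ tightly enough that, on the one hand, the scheme-theoretic intersection $V(\sigma_A)\cap\overline W$ yields precisely the ideal $\langle T_i^2,\ T_jS_k-T_kS_j\rangle$ (the emergence of the $T_i^2$ being the subtle point), and, on the other hand, ``geometrically nested'' corresponds exactly to being a face of $\Sigma$ --- equivalently, that the family $\{X_A\}$ is neither redundant nor deficient for passing from $\PP_1^{n-1}$ to $\overline Y\nlimquot H$.
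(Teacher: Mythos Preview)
Your overall strategy---realise $\overline Y\nlimquot H$ as the closure of $(\overline Y\cap T)/H$ in the toric variety of the GKZ fan $\Sigma$, identify a coarse chamber fan $\Sigma_0$ in which that closure is $\PP_1^{n-1}$, and then interpret each blow-up along an $X_A$ as a stellar subdivision and invoke Proposition~\ref{pro_main_fan}---is exactly the paper's architecture. The explicit dictionary you anticipate (the cones $\sigma_A$, the rays $\nu_A$, and the ideal $\langle T_i^2,\ T_jS_k-T_kS_j\rangle$) is set up in Propositions~\ref{pro_ambient_P1} and~\ref{pro_ambient_P1_extras} via an ambient modification contracting the ray $v_{01}$.

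There is, however, a genuine gap in your plan. You assert that the iterated stellar subdivision $\Sigma_r$ of $\Sigma_0$ ``is exactly $\Sigma$'', and that Proposition~\ref{pro_main_fan} establishes this. Neither is true. First, Proposition~\ref{pro_main_fan} only says that geometrically nested collections produce cones of $\Sigma_r$; it gives no converse, so it cannot identify $\Sigma_r$ with anything. Second, and more seriously, the paper states outright (just above Proposition~\ref{pro_gkz_is_nested}) that $\Sigma_r$ does \emph{not} coincide with $\Sigma$. What saves the argument is that equality is not needed: the closure of $(\overline Y\cap T)/H$ in $Z_\Sigma$ already lies in the toric subvariety given by the \emph{$\Delta$-reduction} $\Sigma^\Delta$, where $\Delta=P(\mathrm{Trop}(\overline Y\cap T))$, and one only has to show $\Sigma^\Delta\subseteq\Sigma_r$. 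This is the content of Proposition~\ref{pro_gkz_is_nested}, proved by translating maximal cones of $\Sigma^\Delta$ into normal $\mathbf N_0$-collections of $\overline Y$-sets (Propositions~\ref{pro:cones_of_sigma_s1}--\ref{pro:cones_of_sigma_s2}), showing that the resulting rays are pairwise \emph{compatible}, and checking in Lemma~\ref{lem:nested} that compatibility implies geometric nestedness---only then does Proposition~\ref{pro_main_fan} apply. Your proposal is missing this tropical step entirely; without it there is no mechanism to ensure that the closures in $Z_{\Sigma_r}$ and $Z_\Sigma$ agree, and the biconditional ``geometrically nested $\Leftrightarrow$ cone of $\Sigma$'' you invoke is simply false.

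A minor point: your remark ``This is the case $n=3$ of the theorem'' about the base model $\PP_1^{n-1}$ conflates the starting variety with the final blow-up; $\PP_1^{n-1}$ is the base for every $n$, whereas the $n=3$ example in the introduction refers to the full (singular) limit quotient obtained \emph{after} the blow-up.
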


Recall that the above action stems from the action of $\mathbb G_a$ on $X=\PP_1^n$ as shown in Sections \ref{sec:nr_limit} and \ref{sec:N0m}. Moreover, keep in mind that the enveloped quotients $V_i$ of $X$ are only subsets of the Mumford quotients of $\overline Y$. Hence the non-reductive limit quotient $X\limquot \mathbb G_a$ in general only is a subset of the reductive limit quotient. This is reflected in the second step of the following procedure to obtain $X\nlimquot \mathbb G_a$.

\begin{theorem}
\label{thm:blow-up_2}
The normalised limit quotient $X\nlimquot \mathbb G_a$ can be obtained by the following procedure.
\begin{enumerate}
 \item Let $X_1$ be the blow-up of $\PP_1^{n-1}$ in the subscheme $X_{N_2}$.
 \item Let $X'_1:=X_1\setminus E$ be the quasiprojective subvariety of $X_1$ where $E$ is the intersection of the proper transform of $V(T_2,\ldots, T_n)\subseteq \PP_1^{n-1}$ with the exeptional divisor in $X_1$. 
 \item Fix a linear extension of the partial order on $\mathcal X$ and blow up $X_1'$ in the respective proper transforms of the remaining subschemes $X_A$, $A\subsetneq N_2$ in ascending order.
 \item Normalise the resulting space.
\end{enumerate}
\end{theorem}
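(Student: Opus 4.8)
The strategy is to reduce Theorem~\ref{thm:blow-up_2} to Theorem~\ref{thm:blow-up} together with the general machinery of Section~\ref{sec:nr_limit}. Recall that by the final paragraph of Section~\ref{sec:nr_limit}, since $\overline Y$ is factorial (it is the cone over a Grassmannian), the normalised reductive limit quotient $\overline Y\nlimquot H$ is the normalisation of $\nu^{-1}(\overline Y\limquot H)$, which in turn is the closure of $(\overline Y\cap T)/H$ inside the toric variety $\KK^r\nlimquot H$ associated to the GKZ-decomposition of the Gale dual of the weight matrix. Theorem~\ref{thm:blow-up} identifies this with the normalisation of the iterated blow-up $\mathrm{Bl}(\PP_1^{n-1})$ along the subschemes $X_A$, $A\subseteq N_2$. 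So steps (i), (iii), (iv) of the procedure already produce $\overline Y\nlimquot H$; the content of Theorem~\ref{thm:blow-up_2} lies in explaining why passing to the \emph{non-reductive} limit quotient $X\nlimquot\mathbb G_a$ amounts to precisely the excision carried out in step (ii).

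\emph{First}, I would make explicit the relationship between the reductive and non-reductive limit quotients. From Proposition~\ref{pro:Im_of_kappa} we have $\kappa(\overline X)=(\overline Y\setminus\overline Y^\star)\cup\{0\}$, and the constructible categorical quotient $\overline Y'=\kappa(\overline X)$ is exactly this set. By the definitions in Section~\ref{sec:nr_limit}, the non-reductive limit quotient $X\limquot\mathbb G_a$ is the closure of the image of $\bigcap(\overline Y'\cap\overline Y_i)$ in the GIT-limit, i.e. it is the closure in $\overline Y\nlimquot H$ (up to normalisation) of the image of $(\overline Y'\cap T)/H=((\overline Y\setminus\overline Y^\star)\cap T)/H$. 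Thus $X\nlimquot\mathbb G_a$ is the normalisation of the closure of $((\overline Y\setminus\overline Y^\star)\cap T)/H$ inside $\overline Y\nlimquot H$. The task is therefore to show that this closure, when transported through the identification of Theorem~\ref{thm:blow-up}, is exactly the blow-up of $X_1'=X_1\setminus E$ along the proper transforms of the $X_A$ with $A\subsetneq N_2$, followed by normalisation.

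\emph{Second}, I would track the locus $\overline Y^\star$ through the Gelfand--MacPherson picture. Recall $\overline Y^\star=\{(0,x)\wedge(0,y)\}$ is the cone over $\mathrm{Gr}(2,n)$, cut out inside $\overline Y$ by the vanishing of all Plücker coordinates $T_{0i}$, equivalently (under $\iota^*$) by $S_1=\cdots=S_n=0$. In the chart where we set the ``extra'' coordinate to $1$ — the dehomogenisation producing $\PP_1^{n-1}$ with coordinates $T_2,S_2,\ldots,T_n,S_n$ (note the first factor is normalised away, reflecting that $\mathbb G_a$ has a one-dimensional orbit through a point with nonzero first coordinate) — the condition ``$z\in\overline Y^\star$'' becomes $S_2=\cdots=S_n=0$, i.e. the point lies on $V(S_2,\ldots,S_n)\subseteq\PP_1^{n-1}$. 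After the single blow-up of step (i) along $X_{N_2}=V(T_i^2,\,T_jS_k-T_kS_j)$, whose reduced structure is the small diagonal, the proper transform of this locus meets the exceptional divisor in exactly the set $E$ described in step (ii); removing $E$ is precisely removing (the closure of) the locus coming from $\overline Y^\star$. Hence the remaining blow-ups in step (iii) take place over $X_1'$, which is an open subvariety, and the blow-up commutes with this open restriction. Normalising at the end (step (iv)) then yields the closure of $((\overline Y\setminus\overline Y^\star)\cap T)/H$ in $\overline Y\nlimquot H$, which is $X\nlimquot\mathbb G_a$.

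\emph{The main obstacle} I expect is step~(ii): pinning down that the closure of the ``good'' torus orbits $((\overline Y\setminus\overline Y^\star)\cap T)/H$ inside the reductive limit quotient differs from the full $\overline Y\limquot H$ exactly by the divisor $E$ — no more and no less. One must verify that no further components of $\overline Y\nlimquot H$ lie over $\overline Y^\star$ beyond $E$ (so that removing $E$ suffices), and conversely that $E$ is genuinely not in the closure of the non-reductive part (so that removing $E$ is necessary). The first point should follow because $\overline Y\setminus\overline Y^\star$ is dense in $\overline Y$ and meets every $T$-orbit stratum except those inside $\overline Y^\star$, so its closure is all of $\overline Y\nlimquot H$ minus the strata supported over $\overline Y^\star$; identifying those strata with $E$ is where the combinatorics of the GIT-fan from Theorem~\ref{thm:fan} — specifically the distinction between the cones $\sigma_i$ and the subfan $\Lambda_H(\overline Y^\star)$ — must be invoked, together with the fact (Proposition~\ref{pro:ffaces}) characterising which coordinate strata are nonempty. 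A subtlety worth flagging is that $\overline Y^\star$ is itself $H$-invariant with the maximal torus action (its limit quotient being $\overline M_{0,n}$), so the exceptional locus $E$ should be identifiable with a quotient of $\overline Y^\star$, giving an independent check on the dimension count.
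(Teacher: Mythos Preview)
There is a genuine gap in your ``First'' paragraph, and it propagates through the rest of the plan. You assert that $X\nlimquot\mathbb G_a$ is the normalised closure of $((\overline Y\setminus\overline Y^\star)\cap T)/H$ inside $\overline Y\nlimquot H$. But $\overline Y^\star$ is cut out by the vanishing of all coordinates $T_{0i}$, so $\overline Y^\star\cap T=\emptyset$ and hence $(\overline Y\setminus\overline Y^\star)\cap T=\overline Y\cap T$. Your closure is therefore all of $\overline Y\limquot H$, and nothing gets removed; this contradicts the theorem. The error is in where the closure is taken: by the definition in Section~\ref{sec:nr_limit}, $X\limquot\mathbb G_a$ is the closure of the generic locus inside the \emph{non-reductive} GIT-limit $X\gitlim\mathbb G_a$, i.e.\ inside the inverse limit of the enveloped quotients $V_i$, not inside $\overline Y\gitlim H$. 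The $V_i$ are strictly smaller than the Mumford quotients, so $X\gitlim\mathbb G_a$ is a proper constructible subset of $\overline Y\gitlim H$, and closure there is smaller. Your ``main obstacle'' paragraph is thus mis-framed: the issue is not that a torus-orbit closure misses $E$, but that the ambient space in which one closes already excludes $E$.

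The paper avoids this difficulty by never taking a closure in the full reductive limit quotient. It fixes the single chamber $\lambda_1$, observes that the good quotient $\overline Y^{\mathrm{ss}}(\lambda_1)\to X_1$ is \emph{geometric}, and therefore (Proposition~\ref{pro:true_quotients}) the enveloped quotient is simply the open set $V_1=X_1\setminus\pi(\overline Y^\star\cap\overline Y^{\mathrm{ss}}(\lambda_1))=X_1\setminus E$; this is Proposition~\ref{pro:enveloped_quotient}. The remaining blow-ups of step~(iii) are then carried out over the open set $V_1$ exactly as in the reductive case (Corollary~\ref{cor:gkz_is_nested}). So the content of step~(ii) is not a closure computation but an explicit identification of one enveloped quotient, resting on the geometricity of the quotient for $\lambda_1$. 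To repair your argument you would need to replace the closure-of-torus-orbits formulation by this enveloped-quotient identification. (A minor point: under the ambient modification in the proof of Proposition~\ref{pro_ambient_P1_extras} the Cox-ring variables $S_{0i}$ pull back to $T_i$ on $\PP_1^{n-1}$, not to $S_i$; this is why the theorem speaks of $V(T_2,\ldots,T_n)$ rather than $V(S_2,\ldots,S_n)$.)
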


We briefly outline the structure of our proof. For this consider $E_n$ the identity matrix and
\[
 Q:=(E_n,D_n),
 \qquad\text{where}\qquad
 D_n:=(e_j\,+\,e_k)_{1\le j<k\le n}.
\]
Note that $Q$ is the matrix recording the weights of the coordinates of the above $H$-action. We denote the first $n$ columns of $Q$ by $w_{0i}$ and the remaining ones by $w_{jk}$. Furthermore, we fix a Gale dual matrix $P$ of $Q$, i.e. a matrix with $PQ^t=0$, and analogously write $v_{0i},\,v_{jk}$ for its columns. Denoting by $T$ the dense algebraic torus of $\bigwedge^2\KK^{n+1}$ we recall from Section~\ref{sec:nr_limit} that there is a normalisation map
\[
 \overline Y\nlimquot H\ \to\ \overline{\left((\overline Y\cap T)/H\right)^\Sigma},
\]
where the latter is the closure in the toric variety associated to the fan $\Sigma:=\mathrm{GKZ}(P)$. With this the proof of Theorem~\ref{thm:blow-up} will be split into two parts. As a first step we will prove that the blow-up of $\PP_1^{n-1}$ in the subscheme $X_{N_2}$ yields one of the Mumford quotients $X_1$ of $\overline Y$. This quotient comes with a canonical embeddeding into a simplicial toric variety $Z_1$, which arises from a simplicial fan $\Sigma_1$ with rays generated by the columns of $P$. Finally we show that the iterated stellar subdivision of $\Sigma_1$ and the fan $\Sigma$ share a sufficiently large subfan. This implies that the proper transform of $X_1$ under the corresponding toric blow-ups and the limit quotient $\overline Y\limquot H$ share a common normalisation.

In the case $n=2$ the normalised limit quotient is the projective line. If we consider three distinct points the resulting normalised limit quotient is the unique non-toric, Gorenstein, log del Pezzo $\KK^*$-surface of Picard number $3$ and a singularity of type $A_1$, see \cite[Theorem 5.27]{elaines_diss}. The standard construction of this surface is the blow-up of three points on $\PP_2$ followed by the contraction of a $(-2)$-curve. However, we realise it as a single (weighted) blow-up of $\PP_1\times \PP_1$ in the subscheme associated to $\langle\,T_2^2,\;T_3^2,\;T_2S_3-T_3S_2\,\rangle$ where $T_2,S_2,T_3,S_3$ are the homogeneous coordinates on $\PP_1\times \PP_1$. Similar to $\overline M_{0,5}$ which is isomorphic to a single Mumford quotient of the cone over the Grassmannian $\mathrm{Gr}(2,5)$, this surface arises as Mumford quotient of the cone over the Grassmannian $\mathrm{Gr}(2,4)$. For higher $n$ an analogous Mumford quotient needs to be blown up as described above to obtain the limit quotient.

\subsection*{Step 1}
Recall that each chamber in the GIT-fan $\Lambda_H(\overline Y)$ gives rise to a set of semistable points admitting a Mumford quotient. We define two particular chambers and look at their respective quotients. For this consider the following linear forms on $\QQ^n$:
\[
 f_1:=e_1^*-\sum_{i\not=1}e_i^*;\qquad f_{1j}:=e_1^*+e_j^*-\sum_{i\not=1,j}e_i^*.
\]
The zero sets of these linear forms are precisely the walls arising from the partitions $\{\{1\},N\setminus\{1\}\}$ and $\{\{1,j\},N\setminus\{1,j\}\}$ of $N=\{1,\ldots, n\}$ in the sense of Section~\ref{sec:N0m}. We define the following two full dimensional cones in the GIT-fan
\begin{align*}
 \lambda_{0}\ &:=\ \Omega\cap \{w\in\QQ^n;~f_1(w)\ge 0\},\\
 \lambda_{1}\ &:=\ \Omega\cap \{w\in\QQ^n;~f_1(w)\le 0,\;f_{1j}(w)\ge 0\text{ for }j=2,\ldots,n\}. 
\end{align*}
where $\Omega$ is the support of $\Lambda_H(\overline Y)$. While $\lambda_1$ lies inside $\Omega^\star=\mathrm{supp}(\Lambda_H(\overline Y^\star))$ the cone $\lambda_0$ does not. The two cones are adjacent in the sense that they share a common facet, namely $\Omega\cap \mathrm{ker}(f_1)$. Now consider the corresponding Mumford quotients $X_i:=\overline Y^{\mathrm{ss}}(\lambda_{i})\gq H$ with $i=0,1$.

\begin{proposition}
\label{pro_ambient_P1}
In the above notation $X_{0}$ is isomorphic to $\PP_1^{n-1}$. Moreover, $X_1$ is isomorphic to the blow-up of $X_{0}$ in the subscheme $X_{N_2}$.
\end{proposition}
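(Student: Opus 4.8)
The plan is to establish the two assertions in turn, first pinning down $X_0$ by an explicit computation in a Grassmannian chart, then realising $X_1$ as a blow-up via the universal property.

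First I would compute the extremal rays of $\lambda_0$ and find $\lambda_0=\mathrm{cone}(w_{01},w_{12},\ldots,w_{1n})$. Hence $z\in\overline Y$ is $\lambda_0$-semistable exactly when $w_{01}$ and every $w_{1i}$ lie in $\omega_{\mathrm{supp}(z)}$, and Proposition~\ref{pro:ffaces} translates this into $z_{01}\neq 0$ together with $(z_{0i},z_{1i})\neq(0,0)$ for all $i\in N_2$. Since $z_{01}$ is then invertible, I pass to the chart $\{T_{01}\neq 0\}$ of the cone $C(\mathrm{Gr}(2,n+1))=\overline Y$, on which $\overline Y\cong\KK^*\times\KK^{2(n-1)}$ with coordinates $t=z_{01}$ and $a_i,b_i$ ($i\in N_2$) subject to $z_{0i}=tb_i$, $z_{1i}=-ta_i$, $z_{jk}=t(a_jb_k-a_kb_j)$, and on which $H$ acts by $h\cdot t=h_1t$, $h\cdot a_i=h_ia_i$, $h\cdot b_i=h_1^{-1}h_ib_i$. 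There $\overline Y^{\mathrm{ss}}(\lambda_0)\cong\KK^*\times\prod_{i\in N_2}(\KK^2\setminus\{0\})$; dividing first by the free $\KK^*$ attached to $h_1$ (using $t$ as a section) removes the $\KK^*$-factor and leaves the residual torus $(\KK^*)^{n-1}$ scaling each $\KK^2\setminus\{0\}$ diagonally, so the quotient is $\prod_{i\in N_2}\PP_1\cong\PP_1^{n-1}$. I would fix the resulting identification so that the $i$-th factor has homogeneous coordinates $[T_i:S_i]$ corresponding (up to scalars) to $z_{0i}$ and $z_{01}^{-1}z_{1i}$; the Plücker relation $z_{01}z_{jk}=z_{0j}z_{1k}-z_{0k}z_{1j}$ then identifies $T_jS_k-T_kS_j$ with $z_{jk}$, so $X_{N_2}=V(T_i^2,\,T_jS_k-T_kS_j)$ is the length-two fat point supported at $p_0$, the image of the single $H$-orbit of bivectors $z_{01}\,e_0\wedge e_1+\sum_{i\ge 2}z_{1i}\,e_1\wedge e_i$ with all entries nonzero.

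For the second assertion, a similar computation of extremal rays gives $\lambda_1=\mathrm{cone}(w_{12},\ldots,w_{1n},\nu)$ with $\nu=(n-3)e_1+\sum_{i\ge 2}e_i$ (so $\nu=w_{23}$ when $n=3$); one checks no wall $\mathcal H_R$ subdivides it, so it is a chamber, adjacent to $\lambda_0$ across the facet $\Omega\cap\ker f_1$. The governing numerical datum is the wall-crossing relation
\[
 \nu\;+\;2\,w_{01}\;=\;w_{12}+\cdots+w_{1n}.
\]
Using these weights, the assignments $[z]\mapsto\big([\,z_{01}z_{0i}:z_{1i}\,]\big)_{i\in N_2}$ are $H$-invariant, and from the description of $\overline Y^{\mathrm{ss}}(\lambda_1)$ via Proposition~\ref{pro:ffaces} (if $z_{01}=0$ then semistability forces all $z_{1i}\neq 0$, while if $z_{01}\neq 0$ then all $(z_{0i},z_{1i})\neq(0,0)$) the pairs $(z_{01}z_{0i},z_{1i})$ are nonzero, so these descend to a morphism $\pi\colon X_1\to\PP_1^{n-1}=X_0$. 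On $\{z_{01}\neq 0\}$ it is the identity onto $X_0\setminus\{p_0\}$, and $\{z_{01}=0\}$ maps onto $p_0$, so $E_0:=\pi^{-1}(p_0)=(\{z_{01}=0\}\cap\overline Y^{\mathrm{ss}}(\lambda_1))/H$ is a divisor and $\pi$ is an isomorphism elsewhere.

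Next I would pull back the ideal of $X_{N_2}$: using $\pi^*T_i\doteq z_{01}z_{0i}$, $\pi^*S_i\doteq z_{1i}$ and the Plücker relations one gets $\pi^*\mathcal I_{X_{N_2}}=z_{01}^2\cdot\langle z_{0i}^2,\,z_{jk};\ i,j,k\in N_2\rangle\cdot\mathcal O_{X_1}$, and since $V(z_{0i},z_{jk})\cap\overline Y^{\mathrm{ss}}(\lambda_1)=\emptyset$ (Proposition~\ref{pro:ffaces}: a point there with $z_{0i}=z_{jk}=0$ would be supported in $\{\{0,1\}\}\cup\{\{1,i\}\}$, whose associated cone lies in $\lambda_0$ or in $\Omega\cap\ker f_1$, neither containing $\lambda_1$) the bracket equals the unit ideal, so $\pi^*\mathcal I_{X_{N_2}}=\mathcal O_{X_1}(-2E_0)$ is invertible. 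By the universal property of blow-ups, $\pi$ factors as $X_1\stackrel{\phi}{\to}\mathrm{Bl}_{X_{N_2}}(X_0)\stackrel{\beta}{\to}X_0$. Finally $X_1$ is normal, being a Mumford quotient of the normal $\overline Y$, and $\mathrm{Bl}_{X_{N_2}}(X_0)$ is normal as well (locally at $p_0$ the ideal $\langle T_i^2,\,T_jS_k-T_kS_j\rangle$ has all its powers integrally closed, so its Rees algebra is normal); $\phi$ is proper and birational, both sides are $\QQ$-factorial of the same Picard rank, hence $\phi$ contracts no divisor, is an isomorphism in codimension one, and therefore (Zariski's main theorem, using normality) an isomorphism.

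The main obstacle is this second part, and within it two points: the bookkeeping needed to read $\overline Y^{\mathrm{ss}}(\lambda_0)$ and $\overline Y^{\mathrm{ss}}(\lambda_1)$ off Proposition~\ref{pro:ffaces}, and—more seriously—the non-reduced, non-toric nature of the centre. Because $X_{N_2}$ is the scheme-theoretic image of a tangent vector of the small diagonal at a torus-fixed point, $\mathrm{Bl}_{X_{N_2}}(X_0)$ is a genuinely weighted blow-up: its exceptional divisor occurs with multiplicity two ($E=2E_0$) and, for $n\ge 3$, it carries a transverse $A_1$-singularity along a $\PP_{n-3}\subseteq E$. This factor of two surfaces simultaneously in the wall-crossing relation above and in the identity $\pi^*\mathcal I_{X_{N_2}}=\mathcal O_{X_1}(-2E_0)$, and keeping track of it—rather than any single step—is where the real care is required; an alternative, equivalent route is to compute $\mathrm{Cox}(\mathrm{Bl}_{X_{N_2}}(X_0))$ directly, show it equals $\mathcal O(\overline Y)$ with the $w_{ij}$-grading, and identify the ample chamber with $\lambda_1$, which runs into exactly the same subtlety.
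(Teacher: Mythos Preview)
Your approach is correct in outline but genuinely different from the paper's, and the final isomorphism step is where it gets soft.

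\textbf{Comparison with the paper.} The paper never works intrinsically with the universal property of blow-ups. Instead it uses the \emph{ambient modification} technique: both $X_0$ and $X_1$ come with closed embeddings into simplicial toric varieties $Z_0$, $Z_1$ whose fans differ by the single stellar subdivision in the ray $v_{01}$. Since $w_{01}$ is extremal in $\Lambda_H(\overline Y)$, contracting $v_{01}$ gives a toric morphism $Z_1\to Z_0$ which, by Proposition~\ref{pro:toricblowup}, is the blow-up of $Z_0$ in a toric subscheme with an explicit monomial ideal in the Cox ring $\mathcal R(Z_0)$. The paper then shows that $\overline X_0:=c(\overline Y)\subseteq\overline Z_0$ is cut out by the equations $S_{ij}-S_{0i}S_{1j}+S_{0j}S_{1i}$, identifies this with $\KK^{2(n-1)}$ via an explicit $\iota$, reads off the induced grading $[E_{n-1},E_{n-1}]$ (hence $X_0\cong\PP_1^{n-1}$), and finally pulls the toric blow-up ideal back through $\iota$ to obtain exactly $\langle T_i^2,\,T_jS_k-T_kS_j\rangle$. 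So the paper proves Proposition~\ref{pro_ambient_P1_extras} for arbitrary $A$ and specialises to $A=N_2$; your factor of two in $\pi^*\mathcal I_{X_{N_2}}=\mathcal O(-2E_0)$ is, in the paper's language, the coefficient $\alpha_\eta=2$ in the expansion $v_{01}=\sum\alpha_\eta v_\eta$. What the ambient method buys is that the identification $X_1\cong\mathrm{Bl}_{X_{N_2}}(X_0)$ comes for free once you know $X_1$ is the proper transform of $X_0$ under $Z_1\to Z_0$: no separate argument that $\phi$ is an isomorphism is needed.

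\textbf{The gap in your Step 5.} Your chart computation for $X_0$ and the construction of $\pi$ are fine, and the pullback calculation $\pi^*\mathcal I_{X_{N_2}}=z_{01}^2\cdot(\text{unit})$ is correct. But the last paragraph is where you would lose points. You assert that $\mathrm{Bl}_{X_{N_2}}(X_0)$ is $\QQ$-factorial of Picard number $n$; this is true but not obvious (you would need to analyse the local ring at the $A_1$-locus on the exceptional divisor, or compute the class group of the Rees algebra directly). You also invoke ``Zariski's main theorem'' for the implication ``iso in codimension one $\Rightarrow$ iso'', but that is not what ZMT says; the correct input is the standard fact that a projective birational morphism from a $\QQ$-factorial variety has pure codimension-one exceptional locus, hence a small such morphism is an isomorphism. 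Both points are fixable, but as written the argument is a sketch, whereas the paper's toric route sidesteps them entirely.
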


Recall that $\lambda_1\in\Lambda_H(\overline Y)$ gives rise to the enveloped quotient $V_1$ which is the image of the restricted Mumford quotient $\overline Y^{\mathrm{ss}}(\lambda_1)\cap \overline Y'\,\to\,X_1$. 

\begin{proposition}
 \label{pro:enveloped_quotient}
 Let $E$ denote the intersection of the exceptional divisor of $X_1\to X_0$ with the proper transform of $V(T_2,\ldots, T_n)$. Then the enveloped quotient $V_1$ is given by $X_1\setminus E$. In particular, it is quasiprojective. 
\end{proposition}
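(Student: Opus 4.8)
The plan is to compute the enveloped quotient $V_1$ explicitly as a subset of the Mumford quotient $X_1 = \overline Y^{\mathrm{ss}}(\lambda_1)\gq H$, using the description $V_1 = q(\overline Y^{\mathrm{ss}}(\lambda_1)\cap\overline Y')$, where $\overline Y' = \kappa(\overline X)$ is the closure of the image of $\kappa\colon\overline X=(\KK^n)^2\to\bigwedge^2\KK^{n+1}$. Recall from Proposition~\ref{pro:Im_of_kappa} that $\overline Y' = (\overline Y\setminus\overline Y^\star)\cup\{0\}$, where $\overline Y^\star$ is the cone over the smaller Grassmannian $\mathrm{Gr}(2,n)$, i.e.\ the coordinate subspace $\{z_{0i}=0\text{ for all }i\}\cap\overline Y$. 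Hence $V_1$ is the image in $X_1$ of the locus of semistable points that do \emph{not} lie in $\overline Y^\star$. So the whole problem reduces to understanding the image in $X_1$ of $\overline Y^{\mathrm{ss}}(\lambda_1)\cap\overline Y^\star$ and checking it equals the divisor $E$.

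First I would identify the stratum structure of $\overline Y^{\mathrm{ss}}(\lambda_1)$ via the GIT-fan combinatorics of Section~\ref{sec:N0m}: by Proposition~\ref{pro:ffaces} the relevant $\overline Y$-sets $I\subseteq\mathbf N_0$ are those with $\lambda_1$ in the relative interior of $\omega_I$, and among these the strata contained in $\overline Y^\star$ are exactly those with $I\subseteq\mathbf N$, i.e.\ $\{0,i\}\notin I$ for all $i$. I would show that such strata, when nonempty and $\lambda_1$-semistable, all map into the proper transform of $V(T_2,\dots,T_n)$. The point is that, under the identification of $X_0$ with $\PP_1^{n-1}$ from Proposition~\ref{pro_ambient_P1}, the coordinate $z_{0i}$ of a point of $\overline Y$ corresponds (up to the $H$-action) to the homogeneous coordinate $T_i$ on the $i$-th factor $\PP_1$; vanishing of all $z_{0i}$ is precisely $V(T_2,\dots,T_n)$ on $X_0$. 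In the Mumford quotient $X_1\to X_0$ the locus $\{z_{0i}=0\ \forall i\}$ in the semistable set of $\lambda_1$ maps onto the intersection of the proper transform of $V(T_2,\dots,T_n)$ with the exceptional divisor, because $\lambda_1$ sits on the side of the wall $\ker(f_1)$ corresponding to the blow-up of $X_{N_2}$ and $X_{N_2}$ is supported on $V(T_2,\dots,T_n)$. I would make this precise by working in the toric ambient varieties $Z_0\supseteq X_0$, $Z_1\supseteq X_1$ attached to the GKZ-style fans with rays $v_{0i},v_{jk}$: the wall crossing from $\lambda_0$ to $\lambda_1$ is a stellar subdivision introducing the ray corresponding to the $w_{0i}$-block, and the torus-invariant prime divisor it creates restricts on $X_1$ to $E$. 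Conversely, any point mapping to $E$ has all $z_{0i}=0$, hence lies in $\overline Y^\star$, hence is not in $\overline Y'$; so $V_1\subseteq X_1\setminus E$, and combined with the reverse containment $V_1 = X_1\setminus E$.

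The main obstacle I expect is the careful bookkeeping needed to match the scheme-theoretic exceptional locus with the combinatorial (toric) picture: one must verify both that every $\lambda_1$-semistable point of $\overline Y^\star$ actually lies over the exceptional divisor (not merely over $V(T_2,\dots,T_n)$ in $X_0$) and that no $\lambda_1$-semistable point outside $\overline Y^\star$ maps into $E$. The first requires knowing which strata of $\overline Y^\star$ survive $\lambda_1$-semistability and checking their images hit the exceptional fibre — this is where Proposition~\ref{pro:git_subfan} and the explicit wall $\ker(f_1)$ are used, since $\lambda_1$ being on the $\Omega^\star$-side forces the extra blow-up data to be ``seen''. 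The second is essentially the content of $\overline Y' = (\overline Y\setminus\overline Y^\star)\cup\{0\}$ together with the fact that $E$ lies over $V(T_2,\dots,T_n)$, so any point over $E$ with some $z_{0i}\neq 0$ would have to be the origin, which is unstable; hence nothing outside $\overline Y^\star$ lies over $E$. Quasiprojectivity of $V_1$ is then immediate: $X_1$ is projective and $E$ is closed, so $X_1\setminus E$ is quasiprojective.

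Finally, I would record the conclusion: combining the two inclusions gives $V_1 = X_1\setminus E$ as asserted, and since removing a closed subset from a projective variety yields a quasiprojective one, $V_1$ is quasiprojective. This also feeds directly into Theorem~\ref{thm:blow-up_2}, where $X_1' := X_1\setminus E$ is exactly the second step of the stated procedure.
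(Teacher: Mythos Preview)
Your approach is essentially the paper's: start from $\overline Y'=(\overline Y\setminus\overline Y^\star)\cup\{0\}$, identify $\overline Y^\star$ with the coordinate locus $D=V(z_{0i};\,i=1,\dots,n)$, and then match its image in $X_1$ with $E$ via the ambient toric picture. The paper, however, streamlines two places where you work harder than necessary. First, it notes that the Mumford quotient $\pi\colon\overline Y^{\mathrm{ss}}(\lambda_1)\to X_1$ is \emph{geometric} (as $\lambda_1$ is a full-dimensional chamber), so $V_1=\pi(\overline Y^{\mathrm{ss}}(\lambda_1)\setminus D)=X_1\setminus\pi(D)$ in one stroke; you never invoke geometricity and instead attempt both inclusions by hand. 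Second, to identify $\pi(D)$ with $E$, the paper simply reads off in the Cox ring of $Z_1$ that the proper transform of $V(T_2,\dots,T_n)$ is cut out by $S_{02},\dots,S_{0n}$ and the exceptional divisor by $S_{01}$, so their intersection is exactly $V(S_{01},\dots,S_{0n})=\pi(D)$.

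There is a genuine slip in your reverse inclusion. You write that ``any point over $E$ with some $z_{0i}\neq 0$ would have to be the origin, which is unstable''; but a point with a nonzero $z_{0i}$ is certainly not the origin, so this sentence does not prove anything. The correct argument is exactly the geometricity you omitted: since $D$ is $H$-invariant and closed and $\pi$ separates orbits, $\pi^{-1}(\pi(D))=D$, so no semistable point outside $\overline Y^\star$ can map into $E$. Once you insert this, your stratum-by-stratum analysis for the other inclusion becomes unnecessary as well.
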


\begin{proposition}
\label{pro_ambient_P1_extras}
Let $A\subseteq N_2$ be a subset with at least two elements. Then the cone $\mathrm{cone}(v_\eta;\ \eta\subseteq A\cup\{0\})$ lies in $\Sigma_0$. Moreover, consider the ray
\[
 \nu\quad:=\quad 
 \mathrm{cone}\,\left(
 \sum_{i\in A}\;v_{0i}+2\sum_{\eta\subseteq A}v_\eta
 \right)
\]
in the relative interior of the above cone. Let $X'$ be the proper transform of $X_0$ under the blow-up corresponding to the stellar subdivision of $\Sigma_0$ in $\nu$. Then $X'$ is isomorphic to the blow-up of $X_0$ in the subscheme $X_A\subseteq X_0$ given by
\[
 \big\langle
 \,T_i^2,\ T_jS_k-T_kS_j\;;\ \ i,\,j,\,k\in A,\ j<k\,
 \big\rangle.
 \]
\end{proposition}


Before we look at our case we recall the connection between blow-ups and stellar subdivisions in general. For this let $\Sigma_1\to\Sigma_0$ be the stellar subdivision of a simplicial fan in $\ZZ^n$ in the ray $\nu$. To any homogeneous ideal in the Cox ring of the corresponding toric variety $Z_0$ we can associate a subscheme of $Z_0$ in the sense of Cox, see \cite[Section~3]{homogeneous_coordinate_ring} for details. We now ask for an ideal and the associated subscheme $Z_\nu$ such that the blow-up of $Z_0$ in $Z_\nu$ is isomorphic to the toric variety $Z_1$ corresponding to $\Sigma_1$.

For this short reminder set $P$ as the matrix mapping the standard basis vectors $f_i$, $i=1,\ldots,r$ of $F:=\ZZ^r$ to the primitive lattice vectors $v_i\in\ZZ^n$ in the rays of $\Sigma_0$. Then the Cox ring of $Z_0$ is $\KK[E\cap \gamma]$ where $E:=F^*$ and $\gamma$ is the positive orthant in $E\otimes_\mathbb Z \QQ$. If the ray $\nu$ lies in the support of $\Sigma_0$, then there exists a subset $I\subseteq\{1,\ldots,r\}$ and minimal positive integers $\alpha_i\in\ZZ_{\ge 1}$ such that
\[\nu\ =\ \mathrm{cone}\left(\sum_{i\in I}\alpha_iv_i\right)\]
holds. Denoting by $(e_1,\ldots, e_r)$ the dual basis of $(f_1,\ldots, f_r)$ we set 
\[
 E_I:=\mathrm{cone}(e_i;\;i\in I),\qquad
 f:=\sum_{i\in I}\alpha_if_i\in F,\qquad
 c:=\mathrm{lcm}(\alpha_i;~i\in I).
 \]
 We obtain a homogeneous ideal in the Cox ring of $Z_0$ and from it a subscheme $Z_\nu$ of $Z_0$ by
\[
 \langle \chi^e;~e\in E_I,\,\langle\, e,f\,\rangle=c\,\rangle
 \ \subseteq\ 
 \KK[E\cap\gamma].
\]

\begin{proposition}
\label{pro:toricblowup}
 Let $\Sigma_1\to\Sigma_0$ be the above stellar subdivision of the simplicial fan $\Sigma_0$ in $\nu$. If $Z_0,Z_1$ are the toric varieties arising from $\Sigma_0,\Sigma_1$ respectively, then $Z_1$ is isomorpic to the blow-up of $Z_0$ in the subscheme $Z_\nu$.
\end{proposition}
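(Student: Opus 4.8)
The statement is the standard local description of a toric stellar subdivision as a blow-up, phrased over the Cox ring. The strategy is to reduce to an affine toric chart, where everything becomes explicit, and then glue. Since the subdivision $\Sigma_1 \to \Sigma_0$ only alters those cones of $\Sigma_0$ containing $\nu$, while leaving every cone not containing $\nu$ untouched, both $Z_1 \to Z_0$ and the blow-up $\mathrm{Bl}_{Z_\nu}(Z_0) \to Z_0$ are isomorphisms over the open set $Z_0 \setminus V(Z_\nu)$. So it suffices to work locally over an affine chart $Z_\sigma = \Spec \KK[\sigma^\vee \cap M]$ for a cone $\sigma \in \Sigma_0$ with $\nu \in \sigma$, where $M = \ZZ^n$ is the character lattice.

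First I would fix such a cone $\sigma$, generated by $v_{i}$ for $i$ in some index set $I_\sigma \supseteq I$ (after enlarging to a cone of the fan; since $\Sigma_0$ is simplicial the $v_i$, $i \in I_\sigma$, are part of a basis up to finite index). Restricting to this chart, the Cox-ring ideal $\langle \chi^e : e \in E_I,\ \langle e, f\rangle = c\rangle$ descends to an ideal $\mathfrak a_\sigma$ in $\KK[\sigma^\vee \cap M]$ — concretely, it is generated by the monomials $x^m$ whose exponent $m$ pairs with $f = \sum_{i \in I}\alpha_i f_i$ to give $c = \mathrm{lcm}(\alpha_i)$, when written in terms of the $v_i$-coordinates. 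The key computation is then to identify the blow-up $\mathrm{Bl}_{\mathfrak a_\sigma}(Z_\sigma)$ with the union of the affine toric charts of $Z_1$ lying over $Z_\sigma$. This is a local, combinatorial check: one verifies that the Proj of the Rees algebra $\bigoplus_{d\ge 0}\mathfrak a_\sigma^d$ is covered by the charts $\Spec \KK[\tau^\vee \cap M]$ where $\tau$ ranges over the maximal cones of the stellar subdivision of $\sigma$ at $\nu$, i.e. the cones $\mathrm{cone}(\nu,\ v_j : j \in I_\sigma \setminus \{i\})$ for $i \in I$. On each such chart the corresponding generator $x^{m_i}$ of $\mathfrak a_\sigma$ generates the ideal after localising, and the dual cone $\tau^\vee$ is exactly the monoid of regular functions there; this is the content of the lcm/minimal-$\alpha_i$ bookkeeping, and matching it carefully is where the real work lies.

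The remaining point is to glue these local identifications over varying $\sigma$ and check compatibility on overlaps. Since the ideal $Z_\nu$ is defined Cox-homogeneously and the construction of $\mathfrak a_\sigma$ is compatible with the restriction maps $\KK[\sigma^\vee\cap M] \to \KK[\sigma'^\vee \cap M]$ for faces $\sigma' \preceq \sigma$, the local blow-ups patch to a global morphism $\mathrm{Bl}_{Z_\nu}(Z_0) \to Z_0$, and the local isomorphisms with charts of $Z_1$ patch to a global isomorphism $Z_1 \cong \mathrm{Bl}_{Z_\nu}(Z_0)$ over $Z_0$. One should also note that the universal property of blow-up makes this identification canonical once one checks that the exceptional divisor of $Z_1 \to Z_0$ — the orbit closure of the ray $\nu$ — is the preimage of $Z_\nu$, equivalently that $\mathfrak a_\sigma \cdot \mathcal O_{Z_1}$ is invertible with support the exceptional locus.

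**The main obstacle.** The delicate step is the local identification: producing the right ideal $\mathfrak a_\sigma$ from the Cox-ring description and showing its Rees algebra has the stellar subdivision of $\sigma$ as its Proj. The subtlety is entirely in the non-reduced/weighted behaviour encoded by the integers $\alpha_i$ and $c = \mathrm{lcm}(\alpha_i)$: when the $\alpha_i$ are not all equal, $\nu$ is not the primitive sum $\sum v_i$ and one genuinely gets a weighted blow-up, so the naive "blow up the intersection of coordinate hyperplanes" picture must be corrected by the exponent $c$. Getting the monoid $\tau^\vee \cap M$ on each new chart to match the localisation of $\mathfrak a_\sigma$ — i.e. verifying $\KK[\tau^\vee \cap M] = \KK[\sigma^\vee\cap M][\mathfrak a_\sigma \cdot x^{-m_i}]$ integrally — is the computation one cannot avoid, though in the applications of this paper the relevant $\alpha_i$ are all $1$ or $2$, so in practice the bookkeeping stays manageable.
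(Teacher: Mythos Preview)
The paper does not actually supply a proof of this proposition: it is stated as a standard fact (in the spirit of the cited references on Cox rings and ambient modifications) and the text immediately moves on with ``We turn back to our setting.'' So there is nothing to compare your argument against.

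That said, your sketch is a correct outline of the standard proof. Your identification of the main difficulty --- that when the $\alpha_i$ are not all equal one is dealing with a genuine weighted blow-up, and that the exponent $c=\mathrm{lcm}(\alpha_i)$ is precisely what is needed so that on each new chart $\tau$ the relevant generator of $\mathfrak a_\sigma$ becomes a local equation for the exceptional divisor --- is exactly the point. One small comment: in the paragraph on gluing you could simply invoke the universal property of the blow-up globally once you know $\mathfrak a_\sigma\cdot\mathcal O_{Z_1}$ is invertible on every chart, rather than patching local isomorphisms by hand; this shortens the argument and avoids having to check compatibility on overlaps directly.
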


 
We turn back to our setting. We prove Propositions~\ref{pro_ambient_P1},~\ref{pro:enveloped_quotient} and \ref{pro_ambient_P1_extras} using the method of ambient modifications, see \cite[Proposition~6.7]{cox_and_combi_2}. For this note that $X_0$ and $X_1$ come with canonical embeddings into simplicial toric varieties. We provide an explicit construction, for the general case see \cite[Chapter~III, Section~2.5]{cox_rings}. For the index sets we use the same notation as in Section~\ref{sec:N0m}:
\[
 \mathbf N=\{\{i,j\},\ 1\le i<j\le n\},\qquad
 \mathbf N_0=\{\{i,j\},\ 0\le i<j\le n\}.
\]
Viewing $\bigwedge^2\KK^{n+1}$ as the toric variety arising from the positive orthant $\delta$ in $\bigwedge^2\QQ^{n+1}$ we define a subset as follows. We set
\[
  \mathrm{envs}(\lambda_i)\ =\ 
\{
I\subseteq\mathbf N_0;\ 
J\subseteq I,\ 
\lambda_i^\circ\subseteq \omega_J^\circ\subseteq \omega_I^\circ
\text{ for some }\overline Y\text{-set }J
\}
\]
as the collection of {\it enveloping sets}. Denoting by $f_\eta$ with $\eta\in\mathbf N_0$ the standard basis vector in $\bigwedge^2\QQ^{n+1}$ we consider the subfan of $\delta$
\[
 \hat \Sigma_i\ :=\ 
 \{
 \mathrm{cone}(f_\eta;\ \eta\in J);\ J\subseteq\mathbf N_0\setminus I\text{ for some }I\in\mathrm{envs}(\lambda_i)
 \}
\]
and the corresponding toric variety $\hat Z_i\subseteq\bigwedge^2\KK^{n+1}$. Then $\hat Z_i$ admits a good quotient $\hat Z_i\to Z_i$; the quotient space is toric again and the quotient morphism corresponds to the lattice homomorphim $P\colon\ZZ^{{n+1}\choose 2}\to\ZZ^{n\choose 2}$. The fan of $Z_i$ is given by
\[
\Sigma_i\ =\ 
\{\mathrm{cone}(v_\eta;\,\eta\in \mathbf N_0\setminus I);\ I\in\mathrm{envs}(\lambda_i)\}
\]
We now turn to the embedded spaces. Starting with the embedding $\overline Y\subseteq\bigwedge^2\KK^{n+1}$ we have $\overline Y\cap\hat Z_i=\overline Y^{\mathrm{ss}}(\lambda_i)$ and the quotient $\hat Z_i\to Z_i$ restricts to the good quotient ${\overline Y^{\mathrm{ss}}(\lambda_i)}\to X_i$. The situation fits into the following commutative diagram where the vertical arrows are closed embeddings.
\[
 \xymatrix{
 {\bigwedge\nolimits^2\KK^{n+1}}
 &
 {\hat Z_i}
 \ar[r]
 \ar[l]
 &
 Z_i
 \\
 {\overline Y}
 \ar[u]
 &
 {\overline Y^{\mathrm{ss}}(\lambda_i)}
 \ar[u]
 \ar[l]
 \ar[r]
 &
 X_i
 \ar[u]
 }
\]

\begin{proof}[Proofs of Proposition \ref{pro_ambient_P1}, \ref{pro:enveloped_quotient} and \ref{pro_ambient_P1_extras}]
We prove the first part of Proposition~\ref{pro_ambient_P1_extras}. For this we set $J:= \mathbf N_0\setminus \{\eta;\ \eta\subseteq A\cup\{0\}\}$. With Proposition~\ref{pro:ffaces} it is easy to see, that $J$ is a $\overline Y$-set. Moreover, $\lambda_0^\circ\subseteq\omega_J^\circ$ holds. By definition of $\Sigma_0$ it is now clear that it contains $\mathrm{cone}(v_\eta;\ \eta\subseteq A\cup\{0\})$.

We now perform the ambient modification. For this note that the weight $w_{01}$ is extremal in $\Lambda_H(\overline Y)$, hence we can contract $v_{01}$. It can be written as a non-negative linear combination
\[
 v_{01}=\sum_{\eta\in \mathbf N_0}\alpha_\eta v_{\eta},
 \quad\text{where}\quad 
 \alpha_\eta=
 \mbox{
 {\tiny 
 $\begin{cases}
  0&\text{if } 1\in\eta\\
  1&\text{if } 0\in\eta,1\notin\eta\\
  2&\text{else}
 \end{cases}$
 }.
 }
\]
In particular, it lies in the above cone $\mathrm{cone}(v_\eta;~\eta\subseteq \{0\}\cup N_2\})$. The total coordinate spaces of the embedding toric varieties $Z_0$ and $Z_1$ are affine spaces, they are given by
\[
 \overline Z_0 =\KK^{\mathbf N_0\setminus\{0,1\}}
 \quad\text{and}\quad
 \overline Z_1 = \bigwedge\nolimits^2\KK^{n+1}=\KK^{\mathbf N_0}.
\]
Furthermore, the ambient modification $\Sigma_1\to\Sigma_0$ gives rise to a morphism of the total coordinate spaces of the respective toric varieties
\[
 c\colon\overline Z_1\to\overline Z_0;\qquad (x_\eta)_{\eta\in \mathbf N_0}\ \mapsto\ (x_{01}^{\alpha_\eta}\;x_\eta)_{\eta\in\mathbf N_0\setminus\{0,1\}}.
\]
We label the variables of the total coordinate space $\overline Z_{0}$ by $S_\eta$ where $\eta$ runs through $\mathbf N_0\setminus \{0,1\}$. Recall that we have a closed embedding $\overline Y\subseteq\overline Z_1$. The vanishing ideal of the image $\overline X_{0}:=c(\overline Y)$ in the Cox ring is given as
\[
 \langle\, S_{ij}\,-\,S_{0i}S_{1j}\,+\,S_{0j}S_{1i}\;;\ \ 2\le i<j\le n\,\rangle
 \quad\subseteq\quad
 \mathcal R(Z_{0}).
\]
It turns out that $\overline X_0$ is in fact isomorphic to the affine space via
\[
\iota\colon \KK^{n-1}\times\KK^{n-1}\to\overline Z_0\;\qquad
(x,y)\mapsto (x,y,(x_iy_j-x_jy_i)_{i< j}).
\]
The original $H$-action on $\overline Y$ descends via $\iota^{-1}\circ c$ to $\KK^{n-1}\times\KK^{n-1}$ and is explicitly given by the weight matrix $Q_0=[E_{n-1},E_{n-1}]$ where $E_{n-1}$ is the identity matrix. This shows that $X_0$ is isomorphic to $\PP_1^{n-1}$. For convenience we summarise the situation in the following commutative diagram.
\[
 \xymatrix{
 {\overline Z_1}
 \ar[r]^c
 &
 {\overline Z_0}
 \\
 {\overline Y}
 \ar[r]^c
 \ar[u]
 &
 {\overline X_0}
 \ar[u]
 &
 {\KK^{2(n-1)}}
 \ar[ul]_\iota
 \ar[l]_\iota
 }
\]

The next step of the proof is the second half of Proposition~\ref{pro_ambient_P1_extras}. From Proposition~\ref{pro:toricblowup} we infer that the ideal in $\mathcal O(\overline Z_0)=\mathcal R(Z_0)$ yielding the center of the blow-up is given by
\[
\langle\ S_{0i}^2,\ S_\eta;\ \ i\in A,\ \eta\subseteq A\ \rangle.
\]
If we pullback this ideal via $\iota^*$ (see \cite[Lemma~2.1]{diagonal}), then in homogeneous coordinates over $\PP_1^{n-1}$ we obtain 
\[
 \langle\ T_i^2,\ T_jS_k-T_kS_j;\ \ i,j,k\in A,\ j<k\rangle.
\]
In the case of the ambient modification of Proposition~\ref{pro_ambient_P1} we set $A=N_2$ to obtain the assertion. Finally, we turn to Proposition~\ref{pro:enveloped_quotient} and determine the enveloped quotient. For this recall that the image of the categorical quotient in Section~\ref{sec:N0m} was given by $\overline Y'=(\overline Y\setminus \overline Y^\star)\cup\{0\}$, see Proposition~\ref{pro:Im_of_kappa}. This means that the enveloped quotient $V_1\subseteq X_1$ is given as the image of 
\[
 \pi\colon\overline Y^{\mathrm{ss}}(\lambda_1)\setminus D\quad\to\quad X_1,
\]
where $D:=V(S_{0i};~i=1,\ldots,n)\subseteq\bigwedge^2\KK^{n+1}$. The quotient is geometric, hence the enveloped quotient is $V_1=X_1\setminus \pi(D)$. Now consider the subvariety $V(T_2,\ldots, T_n)\subseteq\PP_1^{n-1}$. Transferring it via $\iota$ and then taking the proper transform we obtain the subvariety of $X_1$ given by $\langle\,S_{02},\ldots,S_{0n}\,\rangle$ in the Cox ring $\mathcal R(Z_1)$. The intersection with the exceptional divisor is precisely the set $E=\pi(D)$.

\end{proof}



\subsection*{Step 2}
In this step we show that the remaining blow-ups lead to the limit quotient $\overline Y\limquot H$. As before, $Q=(E_n,D_n)$ is the matrix recording the weights of the coordinates of the $H$-action and we label its columns by $w_\eta$ with $\eta\in\mathbf N_0$ and $\mathbf N_0=\{\{i,j\};~0\le i<j\le n\}$. We then have the Gale dual matrix $P$ with columns denoted by $v_\eta$. Moreover, $\Sigma_1$ is the simplicial fan in $\ZZ^{n\choose 2}$ from the preceeding step and we recall that $X_1=\overline Y^{\mathrm{ss}}(\lambda_1)\gq H$ is embedded into the corresponding toric variety $Z_1$. 

Now let $R=\{A_1,A_2\}$ be a {\it true} two-block partition of $N$, i.e. a partition with $|A_1|,\,|A_2|\ge 2$. To every such partition we associate a ray
\[
 \nu_R
 \quad := \quad
 \mathrm{cone}\left(\sum_{i\in A_1}\!v_{0i}\;+\;2\!\!\!\sum_{j<k\in A_1}\!\!\!v_{jk}\right)
 \quad=\quad
 \mathrm{cone}\left(\sum_{i\in A_2}\!v_{0i}\;+\;2\!\!\!\sum_{j<k\in A_2}\!\!\!v_{jk}\right).
\]
Clearly, there exists $A_R\in\{A_1,A_2\}$ with $1\not\in A_R$. From Proposition~\ref{pro_ambient_P1_extras} we now infer that the cone $\sigma_{R}:=\mathrm{cone}(v_{\eta};~\eta\subseteq\{0\}\cup A_R)$ containing $\nu_R$ in its relative interior lies in $\Sigma_1$.

Note that no two rays lie in the relative interior of the same cone of $\Sigma_1$. The above defined collection of rays hence comes with a natural partial order inherited from the fan $\Sigma_1$:
\[
 \nu_{R}\le\nu_{S}:\iff\sigma_R\seite\sigma_S\iff {A}_{R}\subseteq {A}_{S}.
\]
We choose a linear extension of this partial order. Beginning with the maximal ray we then consider the iterated stellar subdivision of $\Sigma_1$ in all the rays in descending order. The resulting fan we denote by $\Sigma_r$.

While it is not true that $\Sigma_r$ coincides with the GKZ-decomposition $\Sigma=\mathrm{GKZ}(P)$, both fans share a sufficiently large subfan. To make this precise let $T$ be the dense torus of $\bigwedge^2\KK^{n+1}$. To $\overline Y\cap T$ we can associate its {\it tropical variety} $\mathrm{Trop}(\overline Y\cap T)$, which is the support of a quasifan in $\bigwedge^2\QQ^{n+1}$. For a detailed description of this space see \cite{tropical_grassmannian}. For our purposes it suffices to know that the image $\Delta:=P(\mathrm{Trop}(\overline Y\cap T))$ intersects the relative interior $\mathrm{cone}(v_\eta;~\eta\in J)^\circ$ of a cone if and only if $\mathbf N_0\setminus J$ is a $\overline Y$-set, see \cite[Proposition~2.3]{subvarieties_of_tori}. We now define the {\it $\Delta$-reduction} of $\Sigma$ as the fan
\[
 \Sigma^\Delta:=\{\sigma;~\sigma\seite\tau\in\Sigma\text{ for some }\tau \text{ with }\tau^\circ\cap \Delta\not=\emptyset\}.
\]
Note that the relative interiors of all maximal cones of $\Sigma^\Delta$ intersect $\Delta$. Moreover, by \cite[Proposition~2.3]{subvarieties_of_tori} the closure of $(\overline Y\cap T)/H$ in the toric variety corresponding to $\Sigma$ is already contained in the toric subvariety defined by $\Sigma^\Delta\subseteq\Sigma$.

\begin{proposition}
\label{pro_gkz_is_nested}
 The $\Delta$-reduction $\Sigma^\Delta$ is a subfan of $\Sigma_r$. 
\end{proposition}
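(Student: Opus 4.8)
The plan is to show that every cone $\sigma\in\Sigma^\Delta$ appears in the iterated stellar subdivision $\Sigma_r$ by exhibiting it as a cone of the form $\mathrm{cone}(v,\nu_i;\,v\in\mathcal C\cap\mathcal V,\ \sigma_i\in\mathcal C\cap\mathcal S)$ for a geometrically nested collection $\mathcal C$, so that Proposition~\ref{pro_main_fan} applies directly. Here $\mathcal V$ is the set of rays of $\Sigma_1$, i.e. those $v_\eta$ appearing in $\Sigma_1$, and $\mathcal S=\{\sigma_R\}$ is the collection of cones $\sigma_R=\mathrm{cone}(v_\eta;\,\eta\subseteq\{0\}\cup A_R)$ being subdivided, one for each true two-block partition $R$ of $N$. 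First I would recall that a maximal cone $\tau$ of $\Sigma$ with $\tau^\circ\cap\Delta\neq\emptyset$ is precisely $\mathrm{cone}(v_\eta;\,\eta\in J)$ with $\mathbf N_0\setminus J$ a $\overline Y$-set (by \cite[Proposition~2.3]{subvarieties_of_tori} together with the description of $\Delta$); so a general cone $\sigma\in\Sigma^\Delta$ is a face of such a $\tau$, hence itself of the form $\mathrm{cone}(v_\eta;\,\eta\in J')$ with $\mathbf N_0\setminus J'$ containing a $\overline Y$-set. The combinatorial task is then to translate such index sets $J'$ into geometrically nested collections.

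The heart of the argument is a dictionary between $\overline Y$-sets and the partition cones $\sigma_R$. Concretely, given $\sigma=\mathrm{cone}(v_\eta;\,\eta\in J')$ in $\Sigma^\Delta$, I would canonically decompose $J'$ into the "blocks'' it determines: looking at which $v_\eta$ with $\eta\subseteq\{0\}\cup A$ are present forces, via the condition $(*)$ of Proposition~\ref{pro:ffaces} and the shape of $\Sigma_1$ coming from Step~1, a distinguished family of subsets $A\subseteq N_2$ (or $A\subseteq N$) that are pairwise either nested or "independent'' in the sense that their associated sum-cones $\sigma_{R_A}+\sigma_{R_{A'}}$ still lie in $\Sigma_0$ and outside $\langle\mathcal S\rangle$. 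I would take $\mathcal C$ to consist of those rays $v_\eta$ of $\sigma$ not accounted for by any $\sigma_R$, together with the cones $\sigma_{R_A}$ for the distinguished $A$'s. Checking that $\mathcal C$ is geometrically nested amounts to: for any antichain $\mathcal H\subseteq\mathcal C$ with $|\mathcal H|\ge 2$, the sum $\sum_{\tau\in\mathcal H}\tau$ lies in $\Sigma_0$ (which is $\Sigma_1$ after undoing no stellar subdivisions — i.e. the simplicial fan of the toric ambient of $X_1$) and is not a sum of a conjunct subfamily of $\mathcal S$. The first condition follows from the $\overline Y$-set property transported along Gale duality; the second is exactly the statement that if $A,A'$ are incomparable subsets, then $\mathrm{cone}(v_\eta;\,\eta\subseteq\{0\}\cup(A\cup A'))$ is \emph{not} one of the $\sigma_R$, because $A\cup A'\neq N_2$ unless $A,A'$ cover, and covering incomparable pairs get split off by the $\Delta$-condition (a cone meeting $\Delta$ cannot contain all the $v_\eta$ with $\eta$ ranging over a true two-block partition on both sides at once — this is where $\overline Y^\star$ being cut out plays its role).

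The step I expect to be the main obstacle is precisely verifying the geometric-nestedness condition $\sum_{\tau\in\mathcal H}\tau\in\Sigma_0\setminus\langle\mathcal S\rangle$ for antichains that mix honest rays $v_\eta$ with partition cones $\sigma_R$; the case of two partition cones reduces cleanly to set-theoretic combinatorics of two-block partitions, but mixed antichains require showing that adjoining a stray ray $v_\eta$ to a partition cone $\sigma_R$ cannot accidentally produce another partition cone, and that the resulting cone is still a face of a $\Delta$-meeting cone of $\Sigma$. I would handle this by a direct case analysis on whether $\eta$ contains $0$ and whether $\eta\subseteq\{0\}\cup A_R$, using $(*)$ each time, and by invoking that $\Delta=P(\mathrm{Trop}(\overline Y\cap T))$ only meets relative interiors of cones complementary to $\overline Y$-sets, which rules out the pathological sums. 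Once geometric nestedness is in hand, Proposition~\ref{pro_main_fan} gives $\sigma\in\Sigma_r$, and since $\sigma\in\Sigma^\Delta$ was arbitrary we conclude $\Sigma^\Delta\subseteq\Sigma_r$.
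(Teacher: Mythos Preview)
Your overall strategy --- reduce to Proposition~\ref{pro_main_fan} by exhibiting each cone of $\Sigma^\Delta$ via a geometrically nested collection $\mathcal C\subseteq\mathcal V\cup\mathcal S$ --- is exactly right and matches the paper. The gap is in how you describe the cones of $\Sigma^\Delta$. You write that a maximal cone $\tau\in\Sigma$ with $\tau^\circ\cap\Delta\neq\emptyset$ is of the form $\mathrm{cone}(v_\eta;\,\eta\in J)$; this is false. The GKZ fan $\Sigma$ has genuinely new rays, namely the $\nu_R$ for true two-block partitions $R$, and a typical maximal cone of $\Sigma$ is generated by a mixture of old rays $v_\eta$ and new rays $\nu_R$. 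Tevelev's criterion tells you when $\Delta$ meets the interior of a cone \emph{of the form} $\mathrm{cone}(v_\eta;\,\eta\in J)$, but those are not the cones of $\Sigma$. Consequently your ``decompose $J'$ into blocks'' step has no well-defined input, and the rest of the argument (extracting distinguished subsets $A$, checking nestedness by set-combinatorics on $J'$) collapses.

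What the paper does to get around this is pass to the Gale-dual side. A cone of $\Sigma$ corresponds to a normal $\mathbf N_0$-collection $\mathfrak B$, and its rays correspond to the submaximal collections $\mathfrak A\in\mathcal M(\mathfrak B)$ dominating $\mathfrak B$ (Proposition~\ref{pro:rays_general}). One then classifies those submaximal collections: each is either $\mathfrak A_\eta$ (giving the ray through $v_\eta$) or $\mathfrak A_R$ for a two-block partition $R$ (giving $v_{0i}$ when $R=[i]$ and $\nu_R$ otherwise); see Proposition~\ref{pro:maximal_collections_over_ffaces}. The $\Delta$-condition translates into $\mathfrak B$ consisting of $\overline Y$-sets (Proposition~\ref{pro:cones_of_sigma_s1}), and under that hypothesis any two collections in $\mathcal M(\mathfrak B)$ are \emph{compatible} in a precise combinatorial sense (Proposition~\ref{pro:cones_of_sigma_s2}). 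A separate lemma (Lemma~\ref{lem:nested}) then converts pairwise compatibility of $\{\mathfrak A_\eta,\mathfrak A_R\}$ into geometric nestedness of $\mathcal C=\{\rho_\eta,\sigma_R\}$. This dual machinery is what replaces your missing description of the rays of $\sigma\in\Sigma^\Delta$; without it you have no handle on which $\nu_R$ actually occur, and your case analysis for nestedness has nothing to bite on.
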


\begin{corollary}
\label{cor:gkz_is_nested}
The proper transform of the Mumford quotient $X_1\subseteq Z_1$ under the toric morphism arising from $\Sigma_r\to\Sigma_1$ and the limit quotient $\overline Y\limquot  H$ share a common normalisation.
\end{corollary}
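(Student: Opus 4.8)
The plan is to identify, up to normalisation, both the proper transform $\tilde X_1$ of $X_1$ in the toric variety of $\Sigma_r$ and the limit quotient $\overline Y\limquot H$ with the closure of the generic quotient $(\overline Y\cap T)/H$ inside the toric variety of the $\Delta$-reduction $\Sigma^\Delta$, and then to feed this into the normalisation maps constructed at the end of Section~\ref{sec:nr_limit}. Here $T\subseteq\bigwedge^2\KK^{n+1}$ is the big torus, $H\subseteq T$, and $T/H$ is the common dense torus of $Z_1$, of the toric variety $Z_r$ of $\Sigma_r$, of the toric variety $Z_\Sigma$ of $\Sigma=\mathrm{GKZ}(P)$, and of the toric variety $Z_{\Sigma^\Delta}$ of $\Sigma^\Delta$.

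First I would recall from the end of Section~\ref{sec:nr_limit} that $\overline Y\nlimquot H$, which is by definition the normalisation of $\overline Y\limquot H$, is precisely the normalisation of the closure of $(\overline Y\cap T)/H$ taken in $Z_\Sigma$, and that by \cite[Proposition~2.3]{subvarieties_of_tori} this closure already lies in the open toric subvariety $Z_{\Sigma^\Delta}\subseteq Z_\Sigma$. Writing $W$ for the closure of $(\overline Y\cap T)/H$ inside $Z_{\Sigma^\Delta}$, it then suffices to prove that $\tilde X_1$ equals $W$; the corollary follows by taking normalisations.

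Next I would show that $\tilde X_1$ is the closure of $(\overline Y\cap T)/H$ in $Z_r$. The toric morphism $Z_r\to Z_1$ coming from $\Sigma_r\to\Sigma_1$ is an isomorphism over $T/H$, and $X_1\cap(T/H)=(\overline Y^{\mathrm{ss}}(\lambda_1)\cap T)/H=(\overline Y\cap T)/H$ because the totally non-vanishing stratum of $\overline Y$ is semistable for every chamber of $\Lambda_H(\overline Y)$ (as $\omega_{\mathbf N_0}=\Omega\supseteq\lambda_1$), and this stratum is dense in the irreducible variety $X_1$. Hence the proper transform $\tilde X_1$ is the closure of $(\overline Y\cap T)/H$ in $Z_r$. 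Since $\Sigma^\Delta$ is a subfan of $\Sigma_r$ by Proposition~\ref{pro_gkz_is_nested}, $Z_{\Sigma^\Delta}$ is open in $Z_r$, so once we know $\tilde X_1\subseteq Z_{\Sigma^\Delta}$ it follows that $\tilde X_1=\tilde X_1\cap Z_{\Sigma^\Delta}$ is the closure of $(\overline Y\cap T)/H$ in $Z_{\Sigma^\Delta}$, i.e. $\tilde X_1=W$.

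The inclusion $\tilde X_1\subseteq Z_{\Sigma^\Delta}$ is the crux, and I expect the main obstacle to lie here. I would argue it orbit by orbit via the tropical criterion recalled before Proposition~\ref{pro_gkz_is_nested}: the closure of $(\overline Y\cap T)/H$ in $Z_r$ meets the torus orbit $O_\sigma$ of a cone $\sigma\in\Sigma_r$ only if $\sigma^\circ\cap\Delta\neq\emptyset$. If some $\sigma\in\Sigma_r\setminus\Sigma^\Delta$ had a point $x\in\sigma^\circ\cap\Delta$, then — using that $\Delta$ is covered by the cones $\mathrm{cone}(v_\eta;\eta\in J)$ with $\mathbf N_0\setminus J$ a $\overline Y$-set, hence lies in the support of $\Sigma^\Delta$ by construction of the $\Delta$-reduction — $x$ would lie in the relative interior of a unique cone $\tau\in\Sigma^\Delta\subseteq\Sigma_r$, and uniqueness of the cone of $\Sigma_r$ containing $x$ in its relative interior would force $\sigma=\tau\in\Sigma^\Delta$, a contradiction. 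Thus every $\sigma\in\Sigma_r\setminus\Sigma^\Delta$ has $\sigma^\circ\cap\Delta=\emptyset$, so $\tilde X_1$ meets no orbit outside $Z_{\Sigma^\Delta}$. The two ingredients needing genuine care are the orbit–closure statement itself (Tevelev's tropical compactification criterion, in the form of \cite[Proposition~2.3]{subvarieties_of_tori}) and the verification that $\Delta$ is indeed covered by the above cones; neither is deep, but that is where the substance of the argument sits.
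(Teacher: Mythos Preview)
Your proposal is correct and follows essentially the same approach as the paper: both identify the proper transform and the limit quotient, up to normalisation, with the closure of $(\overline Y\cap T)/H$ in the toric variety of $\Sigma^\Delta$, using Proposition~\ref{pro_gkz_is_nested} together with the tropical containment statement from \cite[Proposition~2.3]{subvarieties_of_tori}. The paper's proof is a single displayed chain of equalities
\[
 \overline Y\nlimquot H
  \ \to\
  \overline{\left( (\overline Y\cap T)/H\right)^{\Sigma}}
  \ =\
  \overline{\left( (\overline Y\cap T)/H\right)^{\Sigma^\Delta}}
  \ =\
  \overline{\left( (\overline Y\cap T)/H\right)^{\Sigma_r}},
\]
whereas you spell out the justification for the second equality (the orbit-by-orbit tropical argument and the fact that $\Delta\subseteq|\Sigma^\Delta|$) and make explicit why the proper transform of $X_1$ is the closure in $Z_r$; these are exactly the points the paper leaves implicit.
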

\begin{proof}
 For this just note that the following closures coincide and the first morphism is the normalisation map.
 \[
 \overline Y\nlimquot H
  \ \to\ 
  \overline{\left( (\overline Y\cap T)\,/\,H\right)^{\Sigma}}
  \ =\ 
  \overline{\left( (\overline Y\cap T)\,/\,H\right)^{\Sigma^\Delta}}
  \ =\ 
  \overline{\left( (\overline Y\cap T)\,/\,H\right)^{\Sigma_r}}.
 \]
\end{proof}
\begin{remark}
 In fact, with only minor modifications the Step 2 works for every Mumford quotient of $\overline Y$ which arises from a fulldimensional chamber $\lambda$ lying in $\Omega^\star$.
\end{remark}

The idea of the proof of Proposition~\ref{pro_gkz_is_nested} is to give a combinatorial description of the cones in $\Sigma^\Delta$ and to show that these are geometrically nested in the sense of Section~\ref{sec:combinatorics}.

For the moment let $Q\in \Mat(k,r;\ZZ)$ and $P\in\Mat(n,r;\ZZ)$ be arbitrary Gale dual matrices. We set $R:=\{1,\ldots, r\}$. For a subset $I\subseteq R$ we denote by $\gamma_I\subseteq\QQ^n$ the cone generated by the $e_i,~i\in I$ and by $\omega_I:=Q(\gamma_I)$ its image under $Q$. Moreover, if $v_i,\ i\in R$ are the columns of $P$ we set $\sigma_J:=\mathrm{cone}(v_j;\ j\in J)$. A system $\mathfrak B$ of subsets of $R$ is a {\it separated} $R${\it -collection} if any two $I_1,I_2\in\mathfrak B$ admit an invariant separating linear form $f$, in the sense that
\[
 P^*(\QQ^n)\subseteq \mathrm{ker}(f),\quad
 f_{|\gamma_{I_1}}\ge 0,\quad
 f_{|\gamma_{I_2}}\le 0,\quad
 \mathrm{ker}(f)\cap \gamma_{I_i}=\gamma_{I_1}\cap\gamma_{I_2}.
\]
The separated $R$-collections come with a partial order; for two $R$-collections $\mathfrak B_1,\mathfrak B_2$ we write $\mathfrak B_1\le \mathfrak B_2$ if for every $I_1\in \mathfrak B_1$ there exists $I_2\in \mathfrak B_2$ such that $I_1\subseteq I_2$ holds. A separated $R$-collection $\mathfrak B$ will be called {\it normal} if it cannot be enlarged as an $R$-collection and the cones $\omega_I,~I\in \mathfrak B$ form the normal fan of a polyhedron. With respect to the above partial order there exists a unique maximal normal $R$-collection, namely $\langle R\rangle$ which consists of all subsets which are invariantly separable from $R$. By $\mathcal M$ we denote the submaximal normal $R$-collections in the sense, that $\langle R\rangle$ is the only dominating normal $R$-collection. Finally, for a fixed normal $R$-collection $\mathfrak B$ let $\mathcal M(\mathfrak B)$ consist of those collections of $\mathcal M$ lying above $\mathfrak B$.

If $P$ consists of pairwise linearly independent columns, then by \cite[Section II.2]{cox_rings} there is an order reversing bijection
\[
 \{\text{normal }R\text{-collections}\}\ \to\ \Sigma;
 \qquad
 \mathfrak B\ \mapsto\ \bigcap_{I\in\mathfrak B}\sigma_{R\setminus I}.
\]
where again $\Sigma=\mathrm{GKZ}(P)$ is the GKZ-decomposition. It is clear that each maximal $R$-collection $\mathfrak A\in\mathcal M$ gives rise to a ray $\nu_{\mathfrak A}=\bigcap_\mathfrak A\sigma_{R\setminus I}$ of $\Sigma$.
\begin{proposition}
\label{pro:rays_general}
Let $\mathfrak B$ be a normal $R$-collection. Then the cone corresponding to $\mathfrak B$ can be written as
\[
 \bigcap_{I\in\mathfrak B}\sigma_{R\setminus I}
 \ = \ 
 \mathrm{cone}\left(\nu_\mathfrak A;~\mathfrak A\in\mathcal M(\mathfrak B) \right).
\]
\end{proposition}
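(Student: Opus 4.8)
The plan is to prove the statement by a double inclusion, exploiting the order-reversing bijection between normal $R$-collections and cones of $\Sigma=\mathrm{GKZ}(P)$ recalled just above. Write $\tau_{\mathfrak B}:=\bigcap_{I\in\mathfrak B}\sigma_{R\setminus I}$ for the cone attached to a normal $R$-collection $\mathfrak B$, and recall that for $\mathfrak A\in\mathcal M$ the intersection $\nu_{\mathfrak A}=\tau_{\mathfrak A}$ is a ray of $\Sigma$. Since $\mathfrak A\in\mathcal M(\mathfrak B)$ means $\mathfrak B\le\mathfrak A$, i.e.\ $\mathfrak B$ is dominated by $\mathfrak A$, the order-reversing property gives $\nu_{\mathfrak A}=\tau_{\mathfrak A}\subseteq\tau_{\mathfrak B}$ for every such $\mathfrak A$. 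Hence the inclusion
\[
 \mathrm{cone}\left(\nu_{\mathfrak A};~\mathfrak A\in\mathcal M(\mathfrak B)\right)
 \ \subseteq\
 \tau_{\mathfrak B}
\]
is immediate, and all the work is in the reverse inclusion.

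For the reverse inclusion I would argue that the rays of the cone $\tau_{\mathfrak B}$ are exactly the $\nu_{\mathfrak A}$ with $\mathfrak A\in\mathcal M(\mathfrak B)$, so that $\tau_{\mathfrak B}$ is the cone spanned by them. First note that the faces of $\tau_{\mathfrak B}$ are themselves cones of the fan $\Sigma$, hence of the form $\tau_{\mathfrak B'}$ for normal $R$-collections $\mathfrak B'$; under the order-reversing bijection, the faces of $\tau_{\mathfrak B}$ correspond precisely to the normal $R$-collections $\mathfrak B'$ with $\mathfrak B\le\mathfrak B'$. In particular the one-dimensional faces (rays) of $\tau_{\mathfrak B}$ correspond to the \emph{maximal} elements among the normal $R$-collections dominating $\mathfrak B$ that are not all of $\langle R\rangle$; but by definition of $\mathcal M$, a normal $R$-collection $\mathfrak A$ is maximal with $\mathfrak A\neq\langle R\rangle$ exactly when $\mathfrak A\in\mathcal M$, and among those the ones dominating $\mathfrak B$ are by definition the members of $\mathcal M(\mathfrak B)$. (Here one uses that $\langle R\rangle$ is the unique maximal normal $R$-collection and that it corresponds to the zero cone, which is not a ray of $\tau_{\mathfrak B}$ unless $\tau_{\mathfrak B}=\{0\}$, a degenerate case one handles separately.) Thus the rays of $\tau_{\mathfrak B}$ are exactly $\{\nu_{\mathfrak A};~\mathfrak A\in\mathcal M(\mathfrak B)\}$.

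Finally, since $\Sigma$ is the GKZ-decomposition of $P$, every cone $\tau_{\mathfrak B}$ in it is the cone over some face of a polyhedron, hence in particular a pointed polyhedral cone, so it equals the cone generated by its own rays. Combining this with the identification of the ray set, we conclude
\[
 \tau_{\mathfrak B}
 \ =\
 \mathrm{cone}\left(\nu_{\mathfrak A};~\mathfrak A\in\mathcal M(\mathfrak B)\right),
\]
which is the assertion. The main obstacle I anticipate is the bookkeeping in the middle step: namely checking carefully that the maximal normal $R$-collections strictly dominating $\mathfrak B$ are precisely $\mathcal M(\mathfrak B)$ — one must rule out that some $\mathfrak A\in\mathcal M$ dominating $\mathfrak B$ fails to give a \emph{face} of $\tau_{\mathfrak B}$, and conversely that every ray of $\tau_{\mathfrak B}$ arises from a member of $\mathcal M$ rather than from a collection that is maximal only within the sub-poset of collections dominating $\mathfrak B$. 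This is handled by invoking that the bijection is order-reversing and strict, so covering relations among cones correspond to covering relations among collections, together with the defining property that $\langle R\rangle$ is the unique collection above every $\mathfrak A\in\mathcal M$.
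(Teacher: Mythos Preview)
Your proposal is correct and follows essentially the same argument as the paper's proof: both use the order-reversing bijection between normal $R$-collections and cones of $\Sigma$ to show the easy inclusion, then identify the extremal rays of $\tau_{\mathfrak B}$ with the $\nu_{\mathfrak A}$ for $\mathfrak A\in\mathcal M(\mathfrak B)$. The paper's version is terser---it simply asserts that the extremal rays of $\sigma$ form a subset $\mathcal N\subseteq\mathcal M$ and then notes $\mathfrak A\ge\mathfrak B$ via the bijection---while you spell out more carefully the pointedness of $\tau_{\mathfrak B}$ and the bookkeeping about why ``maximal among collections dominating $\mathfrak B$'' coincides with ``in $\mathcal M$ and dominating $\mathfrak B$''; but the content is the same.
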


\begin{proof}
 From the order reversing property of the above bijection it is clear, that every ray $\nu_\mathfrak A$ with $\mathfrak A\in\mathcal M(\mathfrak B)$ lies in $\sigma:=\bigcap_\mathfrak B \sigma_{R\setminus I}$. Moreover, there must exists a set of maximal $\gamma$-collections $\mathcal N\subseteq\mathcal M$ such that the extremal rays of $\sigma$ are precisely the $\nu_\mathfrak A$ with $\mathfrak A\in\mathcal N$. Again from the above bijection we know that this means $\mathfrak A\ge \mathfrak B$. The assertion then follows from the maximality of $\mathfrak A$.
\end{proof}

We now return to our special case where $Q=(E_n,D_n)$ holds and the index set $R$ equals $\mathbf N_0$. We are interested in a description of the submaximal collections $\mathcal M(\mathfrak B)$ where $\mathfrak B$ consists of $\overline Y$-sets. The reason is the following Proposition.

\begin{proposition}
\label{pro:cones_of_sigma_s1}
 Let $\mathfrak B$ be a normal $\mathbf N_0$-collection and suppose that its associated cone $\bigcap_{I\in\mathfrak B}\sigma_{\mathbf N_0\setminus I}$ is a maximal cone in $\Sigma^\Delta$. Then $\mathfrak B$ is a collection of $\overline Y$-sets.
\end{proposition}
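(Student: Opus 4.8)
The plan is to extract the $\overline Y$-set property from the tropical criterion recalled just before the proposition: for $J\subseteq\mathbf N_0$ one has $\mathrm{cone}(v_\eta;\ \eta\in J)^\circ\cap\Delta\neq\emptyset$ if and only if $\mathbf N_0\setminus J$ is a $\overline Y$-set, see \cite[Proposition~2.3]{subvarieties_of_tori}. Writing $\tau:=\bigcap_{I\in\mathfrak B}\sigma_{\mathbf N_0\setminus I}$ for the cone attached to $\mathfrak B$ under the order-reversing bijection (Proposition~\ref{pro:rays_general}), it then suffices to produce, for every $I\in\mathfrak B$, a point of $\Delta$ lying in the relative interior $(\sigma_{\mathbf N_0\setminus I})^\circ$; applying the criterion with $J=\mathbf N_0\setminus I$ shows that $I$ is a $\overline Y$-set.

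First I would note that a maximal cone $\tau$ of $\Sigma^\Delta$ is itself a cone of $\Sigma$ meeting $\Delta$ in its relative interior: by definition $\tau$ is a face of some $\tau'\in\Sigma$ with $(\tau')^\circ\cap\Delta\neq\emptyset$, and $\tau'$ lies in $\Sigma^\Delta$ as well, so maximality forces $\tau=\tau'$. Fix once and for all a point $p\in\tau^\circ\cap\Delta$. Everything then reduces to the claim that $\tau^\circ\subseteq(\sigma_{\mathbf N_0\setminus I})^\circ$ for each $I\in\mathfrak B$: indeed, granting this, $p\in(\sigma_{\mathbf N_0\setminus I})^\circ\cap\Delta$ and we are done.

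To prove the claim, observe that $\tau\subseteq\sigma_{\mathbf N_0\setminus I}$, so $\tau^\circ$ is contained either in $(\sigma_{\mathbf N_0\setminus I})^\circ$ or in the relative interior of a proper face of $\sigma_{\mathbf N_0\setminus I}$; such a proper face, being generated by a subset of the columns of $P$, has the form $\sigma_{\mathbf N_0\setminus I'}$ with $I\subsetneq I'\subseteq\mathbf N_0$. In the second case $\sigma_{\mathbf N_0\setminus I'}\subseteq\sigma_{\mathbf N_0\setminus I}$ together with $\tau\subseteq\sigma_{\mathbf N_0\setminus I'}$ gives $\bigcap_{I''\in(\mathfrak B\setminus\{I\})\cup\{I'\}}\sigma_{\mathbf N_0\setminus I''}=\tau$, so the normal collection attached to $\tau$ would fail to be unique, contradicting that $\mathfrak B\mapsto\tau$ is a bijection. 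Equivalently, one invokes from the construction of that bijection in \cite[Section~II.2]{cox_rings} that the normal collection of $\tau$ is precisely $\{\,I\subseteq\mathbf N_0:\tau^\circ\subseteq(\sigma_{\mathbf N_0\setminus I})^\circ\,\}$.

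The delicate point is exactly this control over which index sets appear in $\mathfrak B$, that is, ruling out that $\tau$ lies in the relative boundary of some $\sigma_{\mathbf N_0\setminus I}$ with $I\in\mathfrak B$. It genuinely uses normality of $\mathfrak B$ -- non-enlargeability together with the normal-fan condition -- and not merely separatedness, since crude surgery on $\mathfrak B$ (replacing $I$ by a larger set) may violate the separating-form condition; so one must argue through the structure theory of normal $\mathbf N_0$-collections and the bijection of Proposition~\ref{pro:rays_general}. Once that is in place, the proposition follows from the single application of \cite[Proposition~2.3]{subvarieties_of_tori} described above.
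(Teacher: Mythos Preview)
Your approach is the same as the paper's: use maximality in $\Sigma^\Delta$ to get $\tau^\circ\cap\Delta\neq\emptyset$, transfer this to each $(\sigma_{\mathbf N_0\setminus I})^\circ$ for $I\in\mathfrak B$, and apply the tropical criterion \cite[Proposition~2.3]{subvarieties_of_tori}. The paper's proof is two sentences; it simply asserts that $(\bigcap_{I\in\mathfrak B}\sigma_{\mathbf N_0\setminus I})^\circ\cap\Delta\neq\emptyset$ forces $(\sigma_{\mathbf N_0\setminus I})^\circ\cap\Delta\neq\emptyset$ for every $I\in\mathfrak B$, treating the inclusion $\tau^\circ\subseteq(\sigma_{\mathbf N_0\setminus I})^\circ$ as part of the standard dictionary between normal collections and GKZ cones.

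Your write-up expands this step, but the expansion is a detour: the surgery argument (replacing $I$ by $I'$) does not stand on its own, as you yourself note, since the modified family need not be separated or normal; and once you invoke the characterisation of the bijection from \cite[Section~II.2]{cox_rings} --- namely that the normal collection attached to a GKZ cone $\tau$ is exactly $\{I:\tau^\circ\subseteq(\sigma_{\mathbf N_0\setminus I})^\circ\}$ --- the proper-face discussion becomes superfluous. You could drop everything between ``To prove the claim'' and ``Equivalently'' and just state that inclusion as a consequence of the cited construction, which is precisely what the paper does implicitly. The final paragraph reads more like commentary than argument and can be omitted.
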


\begin{proof}
Since $(\bigcap_{\mathfrak B}\sigma_{\mathbf N_0\setminus I})^\circ\cap \Delta\not=\emptyset$ holds the same is true for every $\sigma_{\mathbf N_0\setminus I}^\circ$ with $I\in\mathfrak B$. By \cite[Proposition~2.3]{subvarieties_of_tori} this implies that $\mathfrak B$ is a collection of $\mathfrak F$-faces.
\end{proof}

\begin{proposition}
\label{pro:maximal_collections_over_ffaces}
 Suppose that $\mathfrak B$ is a normal $\mathbf N_0$-collection of $\overline Y$-sets and $\mathfrak A\in \mathcal M(\mathfrak B)$ is a submaximal collection dominating it. Then $\mathfrak A$ is of either one of the following types.
 \begin{enumerate}
  \item The collections $\langle I\rangle$ where $I:=\mathbf N_0\setminus \{\eta\}$ for some $\eta\in\mathbf N$.
  \item The collections $\langle I_1,I_2\rangle$ where $I_i:=\{\eta;\ \eta\cap A_i\not=\emptyset\}$ for a two-block partition $R=\{A_1,A_2\}$ of $N$.
 \end{enumerate}
 Moreover, if a collection of the second type lies over $\mathfrak B$, then $\mathfrak B$ contains the set $J_0:=\{\eta;\ A_i\cap \eta\not=\emptyset\text{ for }i=1,2\}$.
\end{proposition}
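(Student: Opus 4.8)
The plan is to translate everything into the combinatorics of the GKZ-decomposition $\Sigma=\mathrm{GKZ}(P)$ and its $\Delta$-reduction $\Sigma^\Delta$, and then to carry out an explicit classification using the Gale dual of $Q$ together with the characterisation of $\overline Y$-sets from Proposition~\ref{pro:ffaces}. Under the order-reversing bijection $\mathfrak B\mapsto\tau_{\mathfrak B}=\bigcap_{I\in\mathfrak B}\sigma_{\mathbf N_0\setminus I}$ between normal $\mathbf N_0$-collections and the cones of $\Sigma$, membership $\mathfrak A\in\mathcal M(\mathfrak B)$ says precisely that $\nu_{\mathfrak A}$ is an extremal ray of $\tau_{\mathfrak B}$, and by Proposition~\ref{pro:rays_general} these rays generate $\tau_{\mathfrak B}$. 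By Proposition~\ref{pro:cones_of_sigma_s1} the normal collections made up of $\overline Y$-sets are exactly those whose cone lies in $\Sigma^\Delta$, and by \cite[Proposition~2.3]{subvarieties_of_tori} a relatively open cone $\mathrm{cone}(v_\eta;\ \eta\in K)^\circ$ meets $\Delta$ iff $\mathbf N_0\setminus K$ satisfies condition $(*)$. So the proposition amounts to describing the extremal rays of $\Sigma^\Delta$ together with the submaximal collections producing them, plus, for the last clause, keeping track of which sets a dominated collection of $\overline Y$-sets must contain.

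First I would record the Gale dual explicitly: since the weight relations are generated by $w_{jk}=w_{0j}+w_{0k}$, a Gale dual $P$ of $Q=(E_n,D_n)$ has, in coordinates indexed by $\mathbf N$, the columns $v_{jk}=e_{\{j,k\}}$ and $v_{0i}=-\sum_{k\ne i}e_{\{i,k\}}$. A direct computation then gives, for $A\subseteq N$,
\[
 \sum_{i\in A}v_{0i}\ +\ 2\!\!\sum_{\{j,k\}\subseteq A}\!\!v_{jk}
 \ =\
 -\!\!\sum_{\substack{\{p,q\}\in\mathbf N\\ |\{p,q\}\cap A|=1}}\!\!v_{pq}\,,
\]
so this vector depends only on the partition $R=\{A,A^c\}$ (its support on the right is exactly the cross-pair set $J_0$) and is the generator of $\nu_R$, which is thus a ray of $\Sigma$ for every two-block partition; for $A=\{i\}$ it is $\mathrm{cone}(v_{0i})$. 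Given an extremal ray $\rho$ of $\Sigma^\Delta$, write a generator as a strictly positive combination of $v_\eta$, $\eta\in S$, with $S$ of minimal support. If $|S|=1$ then $\rho=\mathrm{cone}(v_\eta)$; using $(*)$ one checks $\mathbf N_0\setminus\{\eta\}$ is always an $\overline Y$-set, and that the producing submaximal collection has $\mathbf N_0\setminus\{\eta\}$ as its unique maximal element unless $\eta=\{0,i\}$, in which case it acquires the second maximal element $\{\zeta;\ i\in\zeta\}$ and is the type (ii) collection of the partition $\{\{i\},N\setminus\{i\}\}$; hence the genuinely new rays here are the $\mathrm{cone}(v_{jk})$, $\{j,k\}\in\mathbf N$, giving type (i). If $|S|\ge2$ one uses the explicit columns of $P$ together with $(*)$ --- the latter severely restricting how a minimal positive dependency can be spread over the complements of $\overline Y$-sets --- to conclude that $S$ consists of the pairs within $A\cup\{0\}$ for one block $A$ of a two-block partition, and that the associated submaximal collection has the two maximal elements $I_i=\{\eta;\ \eta\cap A_i\ne\emptyset\}$, i.e. type (ii); that $\tau_{\langle I_1,I_2\rangle}=\sigma_{\mathbf N_0\setminus I_1}\cap\sigma_{\mathbf N_0\setminus I_2}=\nu_R$ is then a short check, since both cones $\mathrm{cone}(v_\eta;\ \eta\subseteq A_i\cup\{0\})$ are simplicial and contain $\nu_R$.

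For the last clause let $\mathfrak A=\langle I_1,I_2\rangle$ of type (ii) dominate $\mathfrak B$. Then $J_0=I_1\cap I_2$ is the set of cross-pairs $\{\{a_1,a_2\};\ a_1\in A_1,\ a_2\in A_2\}$, which is an $\overline Y$-set by $(*)$, its cone $\omega_{J_0}$ spans the wall $\mathcal H_R$ of the GIT-fan (Theorem~\ref{thm:fan}), and the invariant form $f_R=\sum_{i\in A_1}e_i^*-\sum_{i\in A_2}e_i^*$ vanishes exactly on the $w_\eta$ with $\eta\in J_0$ while being positive on the remaining $w_\eta$, $\eta\in I_1$. Given $J\in\mathfrak B$, domination forces $J\subseteq I_1$ or $J\subseteq I_2$, so a suitable sign of $f_R$, perturbed slightly in the directions of the $w_\eta$ with $\eta\in J_0\setminus J$, is an invariant separating form for $J$ and $J_0$. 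Hence $J_0$ is invariantly separable from every member of $\mathfrak B$; as $\mathfrak B$ is normal, hence maximal among separated collections of its kind, $J_0\in\mathfrak B$.

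The step I expect to be the main obstacle is the case $|S|\ge2$ of the classification: ruling out all extremal rays of $\Sigma^\Delta$ other than types (i) and (ii). This requires a careful analysis of which strictly positive dependencies among the $v_\eta$ can be supported on the complements of a collection of sets all satisfying $(*)$ --- it is exactly here that condition $(*)$ has to be exploited to force the support onto the two sides of one two-block partition. A secondary technical point is the perturbation in the last clause: one must check the correction to $f_R$ can be chosen invariant, small, and supported so as not to disturb the sign and kernel conditions on $\gamma_J$, which uses that $J_0$ is the edge set of the complete bipartite graph on $A_1\sqcup A_2$.
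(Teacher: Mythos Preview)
Your approach is genuinely different from the paper's, and the key classification step is not actually carried out.

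The paper works entirely on the \emph{weight side}, i.e.\ with the cones $\omega_I\subseteq\QQ^n$ rather than with the rays of $\Sigma$. Given $\mathfrak A\in\mathcal M(\mathfrak B)$, it picks a full-dimensional $\omega_I$, $I\in\mathfrak A$. If $\omega_I=\Omega$ then submaximality forces $I=\mathbf N_0\setminus\{\eta\}$ and one checks $\eta\in\mathbf N$ (type~(i)). Otherwise $\omega_I\subsetneq\Omega$, and since $\{\omega_K;~K\in\mathfrak A\}$ covers $\Omega$ there is an interior facet $\omega_{I'}$. Because $\mathfrak B$ is normal and consists of $\overline Y$-sets, this facet is refined by some $\omega_{J'}$ with $J'\in\mathfrak B$ lying in a wall of $\Lambda_H(\overline Y)$; now Theorem~\ref{thm:fan} says $J'\subseteq J_0$ for a unique partition $R$, and Proposition~\ref{pro:ffaces} forces $J'=J_0$. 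The linear form $l=\sum_{A_1}e_i^*-\sum_{A_2}e_i^*$ then has a definite sign on each $\omega_K$, $K\in\mathfrak A$, which gives $\mathfrak A\le\langle I_1,I_2\rangle$ and hence equality by submaximality (type~(ii)). Note that the last clause falls out for free: $J_0=J'\in\mathfrak B$ is produced in the course of the argument, with no separate perturbation needed.

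Your route via the Gale dual and the rays of $\Sigma^\Delta$ is in principle legitimate, but the case $|S|\ge 2$---which you yourself flag as the main obstacle---is not executed: ``one uses the explicit columns of $P$ together with $(*)$'' is not an argument, and it is exactly here that the work lies. The paper bypasses this by recycling Theorem~\ref{thm:fan}, which already encodes the classification of interior walls and hence of submaximal collections over $\overline Y$-sets. If you insist on the dual side you will essentially have to redo that wall classification from scratch.

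Your argument for the last clause also has a gap. You want an invariant separating form for $J_0$ and an arbitrary $J\in\mathfrak B$ with $J\subseteq I_1$; starting from $-f_R$ you need a correction $h\in(\QQ^n)^*$ with $h(w_\eta)=0$ for $\eta\in J\cap J_0$ and $h(w_\eta)>0$ for $\eta\in J_0\setminus J$. The vectors $w_\eta$, $\eta\in J_0$, are precisely the $e_j+e_k$ with $j\in A_1$, $k\in A_2$, and for an \emph{arbitrary} subset $J\cap J_0\subseteq J_0$ such an $h$ need not exist. One has to use that $J$ is an $\overline Y$-set to control $J\cap J_0$; at that point you are essentially redoing the paper's step $J'=J_0$, and it is cleaner to argue as the paper does.
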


Since every collection of the first type is uniquely determined by the element $\eta$, we write it as $\mathfrak A_\eta$. The ray $\rho_\eta$ of $\Sigma$ corresponding to this submaximal collection is generated by $v_\eta$.
 
If a submaximal collection is of the second type, then it is characterised by the partition $R$ of $N$; for it we write $\mathfrak A_R$. Moreover, the associated ray arises as intersection of $\sigma_{\mathbf N_0\setminus I_1}$ and $\sigma_{\mathbf N_0\setminus I_2}$. We now have to discriminate two cases. If the partition $R$ is of the form $[i]:=\{\{i\},N\setminus\{i\}\}$, then the corresponding ray $\rho_{[i]}=\rho_{0i}$ is generated by $v_{0i}$. Otherwise, if $R$ is a true two-block partition, by \cite[Proposition~4.1]{on_chow_quotients} we know that this ray is precisely $\nu_R$, which was defined at the beginning of Step 2.

\begin{proof}[Proof of Proposition~\ref{pro:maximal_collections_over_ffaces}]
Consider an $I\in \mathfrak A$ such that $\omega_I$ is full dimensional. We now discrimnate two cases. For the first case assume that $\omega_I=\Omega$ holds. Since $\mathfrak A$ is submaximal, $I=\mathbf N_0\setminus\{\eta\}$ for some $\eta\in\mathbf N_0$. If we had $0\in \eta$, then $\omega_I$ would be a proper subset of $\Omega$.

We turn to the second case where $\omega_I\subsetneq \Omega$ holds. Then there exists an $I'\in\mathfrak A$ such that $\omega_{I'}$ is a facet of $\omega_I$ and $\omega_{I'}^\circ\cap\Omega^\circ$ is non-empty. Since $\mathfrak B$ cannot be enlarged as $\mathbf N_0$-collection, there moreover exist $J,J'\in\mathfrak B$ such that 
\[
 \omega_J^\circ\subseteq\omega_I^\circ,
 \qquad
 \omega_{J'} \text{ is a facet of }\omega_J,
 \qquad
 \omega_{I'}^\circ\cap\omega_{J'}^\circ\not=\emptyset.
\]
 Now $\omega_{J'}$ is a subset of one of the walls of $\Lambda_H(\overline Y)$. Thus, from Theorem~\ref{thm:fan} we know that there exists some partition $\{A_1,A_2\}$ of $N$ such that $J'$ is a subset of $J_0:=\{\eta;\ A_i\cap\eta\not=\emptyset\text{ for }i=1,2\}$. We now claim that $J'$ equals $J_0$.

For this let $i_1\in A_1,i_2\in A_2$ be two indices. Since $\omega_{J'}$ is of dimension $n-1$, there exist $i_1'\in A_1,\ i_2'\in A_2$ such that $\{i_1,i_2'\}$ and $\{i_2,i_1'\}$ lie in $J'$. From the inclusion $J'\subseteq J_0$ we know that $\{i_1,i_1'\}$ does not lie in $J'$, hence from the characterisation of $\overline Y$-sets in Proposition~\ref{pro:ffaces} it follows that $\{i_1,i_2\}$ lies in $J'$. This proves our claim.

Now let $\mathfrak A'$ be the normal $R$-collection consisting of all faces which are invariantly separable from
\[
 \{\eta;~\eta\cap A_1\not=\emptyset\}
 \qquad\text{and}\qquad
 \{\eta;~\eta\cap A_2\not=\emptyset\}.
\]
Then $\mathfrak A'$ is submaximal and the assertion follows if we show that $\mathfrak A\le\mathfrak A'$ holds. For this note that $\omega_{J'}$ is the intersection of $\Omega$ with the zero set of 
\[
 l:=\sum_{i\in A_1}e^*_i-\sum_{i\in A_2}e^*_i.
\]
Since the collection $\{\omega_K;\ K\in\mathfrak A\}$ forms a fan with support $\Omega$, for every cone $\omega_K,\ K\in \mathfrak A$ we have $l_{|\omega_K}\ge 0$ or $l_{|\omega_K}\le 0$. This implies that $\mathfrak A\le \mathfrak A'$ holds.

\end{proof}

Recall that we want show that the (maximal) cones of $\Sigma^\Delta$ are geometrically nested in the sense of Section~\ref{sec:combinatorics} and hence lie in $\Sigma_r$. The relevant property of the corresponding $\mathbf N_0$-collections shall be discusses in the sequel.

Let $R=\{A_1,A_2\}$ and $S=\{B_1,B_2\}$ be two-block partitions of $N$ and $\eta\in \mathbf N_0$. We then call the pair $\{\eta, R\}$ {\it compatible} if $\eta$ lies in $A_1$ or in $A_2$. Moreover, we call $\{R,S\}$ {\it compatible}, if there exist $i,j\in\{1,2\}$ such that $A_i\subseteq B_j$ holds. The pairs of submaximal collections $\{\mathfrak A_\eta,\mathfrak A_R\}$ and $\{\mathfrak A_R,\mathfrak A_S\}$ are {\it compatible}, if the corresponding pairs $\{\eta,R\}$ and $\{R,S\}$ are compatible.

\begin{proposition}
\label{pro:cones_of_sigma_s2}
 Let $\mathfrak B$ be a normal $\mathbf N_0$-collection of $\overline Y$-sets. Then the submaximal collections in $\mathcal M(\mathfrak B)$ are pairwise compatible.

\end{proposition}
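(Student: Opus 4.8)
The plan is to push through the classification of $\mathcal M(\mathfrak B)$ supplied by Proposition~\ref{pro:maximal_collections_over_ffaces}. Every $\mathfrak A\in\mathcal M(\mathfrak B)$ is either of type (i), $\mathfrak A=\mathfrak A_\eta$ for some $\eta\in\mathbf{N}$, with corresponding ray $\mathrm{cone}(v_\eta)$, or of type (ii), $\mathfrak A=\mathfrak A_R$ for a two-block partition $R=\{A_1,A_2\}$, and in the latter case the wall set $J_R:=\{\eta\in\mathbf{N}_0;\ \eta\cap A_1\neq\emptyset\neq\eta\cap A_2\}$ lies in $\mathfrak B$. I will also use the order-reversing bijection $\mathfrak A\mapsto\sigma_\mathfrak A=\bigcap_{I\in\mathfrak A}\sigma_{\mathbf{N}_0\setminus I}$ between normal collections and cones of $\Sigma$, so that $\mathfrak A\geq\mathfrak B$ forces $\sigma_\mathfrak A\subseteq\sigma_\mathfrak B\subseteq\sigma_{\mathbf{N}_0\setminus I}$ for every $I\in\mathfrak B$; that $\sigma_{\mathfrak A_\eta}=\mathrm{cone}(v_\eta)$; and that $\mathfrak B$, being a normal collection, is in particular separated, so any two of its members admit an invariant separating form $f=Q^t g$. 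Since two collections of type (i) carry no compatibility condition, it remains to settle the pairs $\{\mathfrak A_R,\mathfrak A_S\}$ and $\{\mathfrak A_\eta,\mathfrak A_R\}$ in $\mathcal M(\mathfrak B)$, and in each case I argue by contradiction from incompatibility.

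For two type-(ii) collections $\mathfrak A_R,\mathfrak A_S\in\mathcal M(\mathfrak B)$ we have $J_R,J_S\in\mathfrak B$, hence an invariant form $f=Q^t g$ with $g|_{\omega_{J_R}}\ge 0$, $g|_{\omega_{J_S}}\le 0$ and, by the face condition, $g(w_\eta)=0$ for $\eta\in J_R$ precisely when $\eta\in J_R\cap J_S$, and symmetrically for $J_S$. Assume $R=\{A_1,A_2\}$ and $S=\{B_1,B_2\}$ are incompatible, i.e. all four intersections $A_i\cap B_j$ are non-empty, and pick $a_{ij}\in A_i\cap B_j$; these are four distinct points of $N$. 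Writing $g_k:=g(e_k)$, so that $g(w_{\{k,l\}})=g_k+g_l$, a pair $\{a_{ij},a_{kl}\}$ lies in $J_R$ (resp. $J_S$) exactly when its two endpoints lie in different blocks of $R$ (resp. $S$). Reading off the resulting inequalities and equalities for the six pairs among the $a_{ij}$ forces $g_{a_{11}}=g_{a_{12}}=g_{a_{21}}=g_{a_{22}}=0$; but then $g(w_{\{a_{11},a_{21}\}})=0$ while $\{a_{11},a_{21}\}\in J_R\setminus J_S$, contradicting the strict part of the face condition. Hence $R$ and $S$ are compatible.

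For a pair $\{\mathfrak A_\eta,\mathfrak A_R\}$ in $\mathcal M(\mathfrak B)$ we have $J_R\in\mathfrak B$ and $\mathfrak A_\eta\geq\mathfrak B$, so $v_\eta\in\sigma_{\mathfrak A_\eta}\subseteq\sigma_\mathfrak B\subseteq\sigma_{\mathbf{N}_0\setminus J_R}=\mathrm{cone}(v_\zeta;\ \zeta\notin J_R)$. As $P,Q$ are Gale dual, $\ker P=\mathrm{im}(Q^t)$, so this membership is equivalent to the existence of a $g$ with $g(w_\eta)=1$, $g(w_\zeta)\le 0$ for $\zeta\notin J_R$, and $g(w_\zeta)=0$ for $\zeta\in J_R\setminus\{\eta\}$. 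Suppose $\{\eta,R\}$ is incompatible; since $\eta\in\mathbf{N}$ this means $\eta\in J_R$, say $\eta=\{a,b\}$ with $a\in A_1$, $b\in A_2$. Choosing auxiliary $a'\in A_1\setminus\{a\}$ and $b'\in A_2\setminus\{b\}$ (available when $R$ is a genuine two-block partition) and exploiting $g(w_{\{i,j\}})=g_i+g_j=0$ for $\{i,j\}\in J_R\setminus\{\eta\}$ yields $g_a+g_b=0$, contradicting $g(w_\eta)=1$; for $R=[i]$ one argues directly from $g_k=-g_i\le 0$, forced for all $k\notin\{i\}\cup\eta$ by $\{0,k\}\notin J_R$, together with $g_i,g_b\le 0$. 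Either way $v_\eta\notin\sigma_\mathfrak B$, a contradiction, so $\{\eta,R\}$ is compatible.

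The hard part is none of the conceptual scaffolding but the finite feasibility check in the two cases: translating ``incompatible combinatorial data'' into an infeasible system of linear (in)equalities in the coordinates $g_k$. The delicate points will be the correct bookkeeping of which pairs built from block representatives land in $J_R$, $J_S$ or $J_R\cap J_S$, the essential use of the \emph{strict} inequality coming from the face condition of the separating form, and the separate but routine treatment of singleton-block partitions $[i]$ and of the degenerate small-$n$ situations. Everything else is a direct application of Proposition~\ref{pro:maximal_collections_over_ffaces} and the normal-collection/cone dictionary.
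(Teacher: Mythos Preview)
Your argument is correct in outline and in substance, but it follows a genuinely different route from the paper. The paper's proof is purely combinatorial and never writes down a linear form: for $\{\mathfrak A_\eta,\mathfrak A_R\}$ it observes that $\eta$ lies in no $I\in\mathfrak B$, yet the cones $\omega_I$ cover $\Omega$, so $w_{ij}=w_{0i}+w_{0j}$ forces $\{0,i\},\{0,j\}$ into a common $I\in\mathfrak B$; since $\mathfrak B\le\mathfrak A_R$ this $I$ sits inside one of the two halves $\{\zeta;\ \zeta\cap A_k\neq\emptyset\}$, giving $i,j\in A_k$. For $\{\mathfrak A_R,\mathfrak A_S\}$ it simply uses $J_R\in\mathfrak B\le\mathfrak A_S$, so $J_R$ is contained in one half of $\mathfrak A_S$, which unwinds directly to $A_i\subseteq B_j$. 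Your approach instead feeds the combinatorial data into an (in)feasibility problem for the coordinates of a separating form, respectively of a Gale-dual certificate for $v_\eta\in\sigma_{\mathbf N_0\setminus J_R}$. This is heavier on bookkeeping but entirely self-contained; the paper's argument is shorter and more conceptual but leans more on the order relation $\mathfrak B\le\mathfrak A$ and the covering property of a normal collection.

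Two small corrections that do not affect validity. In your type~(ii)/type~(ii) case the system of six relations is already infeasible before anything is forced to vanish: substituting the two equalities $g_{a_{11}}=-g_{a_{22}}$, $g_{a_{12}}=-g_{a_{21}}$ into the two strict inequalities coming from $J_R\setminus J_S$ gives $g_{a_{11}}-g_{a_{12}}>0$ and $g_{a_{12}}-g_{a_{11}}>0$, a direct contradiction; so the sentence ``forces $g_{a_{11}}=\dots=0$'' should be replaced by this. (Note also that if either partition is of the form $[i]$ then compatibility is automatic, so only true two-block partitions enter here and your four representatives $a_{ij}$ always exist.) In your type~(i)/type~(ii) case with $R=[i]$ the line ``$g_k=-g_i\le 0$, forced \dots by $\{0,k\}\notin J_R$'' conflates two sources: $\{0,k\}\notin J_R$ gives $g_k\le 0$, while $g_k=-g_i$ comes from $\{i,k\}\in J_R\setminus\{\eta\}$; combining the two yields $g_i=0$ and then $g(w_\eta)=g_b\le 0$, the intended contradiction.
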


\begin{proof}
Let $\mathfrak A_\eta, \mathfrak A_R\ge \mathfrak B$ be two submaximal collections with $R=\{A_1,A_2\}$. Then $\{i,j\}:=\eta$ is contained in no $I\in\mathfrak B$. However, the cones $\omega_I,\ I\in \mathfrak B$ cover $\Omega$.  Since $w_{ij}=w_{0i}+w_{0j}$ is the only positive linear combination of $w_{ij}$, the sets $\{0,i\},\; \{0,j\}$ must lie in a common $I\in\mathfrak B$. From the characterisation in Proposition~\ref{pro:maximal_collections_over_ffaces}(ii) we can now infer that without loss of generality $i,j\in A_1$ holds and this implies compatibility of $\eta$ with $R$.

Suppose we have $\mathfrak A_R,\mathfrak A_S\ge \mathfrak B$ with $R=\{A_1,A_2\}$ and $S=\{B_1,B_2\}$. From Proposition~\ref{pro:maximal_collections_over_ffaces} we infer that the set $J_0=\{\eta;\ \eta\cap A_i\not=\emptyset\text{ for }i=1,2\}$ lies in $\mathfrak B$. This means that $J_0$ lies in one of the maximal sets of $\mathfrak A_S$. In other words, there exists $j$ such that
\[
 \eta\cap A_1\not=\emptyset\quad\text{and}\quad\eta\cap A_2\not=\emptyset
 \qquad\Longrightarrow \qquad
 \eta\cap B_j\not=\emptyset.
\]
This implies that there exists $i$ such that $A_i\subseteq B_j$ holds and hence $\{R,S\}$ is compatible.
\end{proof}

The final thing we show is that the cones defined by compatible submaximal collections are geometrically nested in the sense of Section~\ref{sec:combinatorics}. For this we define $\mathbf S$ as the collection of two-block partitions of $N$ and set $\mathbf S_{\ge 2}$ as the subcollection of true two-block partitions, i.e. the partitions $\{A_1,A_2\}$ with $|A_1|,\,|A_2|\ge 2$.

We set $\mathcal V=\{\rho_\eta;~\eta\in\mathbf N_0\}$ as the set of rays of $\Sigma_1$. Keep in mind that the rays $\rho_{0i}$ stem from the partitions $[i]=\{\{i\},N\setminus\{i\}\}$, hence we have $\rho_{0i}=\rho_{[i]}$.

Moreover, we define $\mathcal S:=\{\sigma_R;\ R\in \mathbf S_{\ge 2}\}$ as the collection of cones in $\Sigma_1$ associated to true two-block partitions. This is precisely the collection of cones containing the rays $\nu_R$ in their relative interiors.

\begin{lemma}
 \label{lem:nested}
 Consider the collection of cones
 \[
  \mathcal C:=\{\rho_\eta,\,\sigma_R;~\mathfrak A_\eta,\mathfrak A_R\in\mathcal N\}
  \qquad\text{for some}\qquad
  \mathcal N\subseteq \{\mathfrak A_\eta,\,\mathfrak A_R;~\eta\in \mathbf N,\, R\in\mathbf S\}.
 \]
If any pair in $\mathcal N$ is compatible, then $\mathcal C$ is geometrically nested in $\mathcal V\cup\mathcal S$.
\end{lemma}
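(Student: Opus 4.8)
The plan is to verify directly the two requirements in the definition of a geometrically nested collection. Fix a subset $\mathcal{H}\subseteq\mathcal{C}$ of pairwise incomparable cones with $|\mathcal{H}|\ge2$, and write its elements as $\rho_{\eta_1},\dots,\rho_{\eta_s}$ (the rays, coming from the $\mathfrak{A}_\eta$- and $\mathfrak{A}_{[i]}$-type collections in $\mathcal{N}$, so $\eta_l\in\mathbf{N}_0$) and $\sigma_{R_1},\dots,\sigma_{R_t}$ (the cones, coming from true two-block partitions $R_m$ with $\mathfrak{A}_{R_m}\in\mathcal{N}$). The first step is to extract the combinatorial content of the compatibility hypothesis. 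A short check of the possible block inclusions shows: if $\sigma_R,\sigma_S$ are incomparable and $\{R,S\}$ is compatible then $A_R\cap A_S=\emptyset$; if $\rho_\eta\not\preceq\sigma_R$ and $\{\eta,R\}$ is compatible then $\eta$ lies in the block of $R$ containing $1$, so $\eta\cap A_R=\emptyset$; and, likewise, no $\eta_l$ meets two of the blocks $A_{R_m}$. Consequently the cones in $\mathcal{H}$ have pairwise disjoint sets of rays, and the set of indices $\zeta\in\mathbf{N}_0$ with $\rho_\zeta$ a ray of $\sum_{\tau\in\mathcal{H}}\tau$ is the disjoint union
\[
 \mathcal{R}\ :=\ \{\eta_1,\dots,\eta_s\}\ \sqcup\ \bigsqcup_{m=1}^{t}\{\zeta\in\mathbf{N}_0;\ \zeta\subseteq\{0\}\cup A_{R_m}\}.
\]

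To prove $\sum_{\tau\in\mathcal{H}}\tau\in\Sigma_1$, note that since $\Sigma_1$ is simplicial it suffices to exhibit one cone $\gamma\in\Sigma_1$ whose set of rays contains $\{v_\eta;\ \eta\in\mathcal{R}\}$; the face of $\gamma$ on these rays then lies in $\Sigma_1$ and, by the description of $\mathcal{R}$ above, equals $\sum_{\tau\in\mathcal{H}}\tau$. By the description of $\Sigma_1$ through enveloping sets, such a $\gamma$ is furnished by any $\overline{Y}$-set $J$ with $J\cap\mathcal{R}=\emptyset$ and $\lambda_1^\circ\subseteq\omega_J^\circ$. Here I would use that each $\sigma_{R_m}$ and each $\rho_{\eta_l}$ is already a cone of $\Sigma_1$ (Proposition~\ref{pro_ambient_P1_extras} together with the stellar subdivision of Step~1), so that each piece $\{\zeta\subseteq\{0\}\cup A_{R_m}\}$ and each $\{\eta_l\}$ is the complement of an enveloping set of $\lambda_1$; the point is then to amalgamate the corresponding $\overline{Y}$-sets, using the pairwise disjointness of the $A_{R_m}$, into one $\overline{Y}$-set avoiding all of $\mathcal{R}$. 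I expect this to come down to showing that $\mathbf{N}_0\setminus\mathcal{R}$ itself lies in $\mathrm{envs}(\lambda_1)$, which would follow by verifying condition $(*)$ of Proposition~\ref{pro:ffaces} for $\mathbf{N}_0\setminus\mathcal{R}$ --- via a case analysis of the positions of four indices relative to the disjoint blocks $A_{R_m}$ and their complement in $N$ --- together with $\lambda_1^\circ\subseteq\omega_{\mathbf{N}_0\setminus\mathcal{R}}^\circ$, the latter via Theorem~\ref{thm:fan}. This amalgamation is the main obstacle of the proof.

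To prove $\sum_{\tau\in\mathcal{H}}\tau\notin\langle\mathcal{S}\rangle$, suppose $\sum_{\tau\in\mathcal{H}}\tau=\sum_{\sigma\in\mathcal{S}'}\sigma$ for a conjunct $\mathcal{S}'\subseteq\mathcal{S}$. Since $\sum_{\tau\in\mathcal{H}}\tau\in\Sigma_1$ by the previous step, each $\sigma_T\in\mathcal{S}'$ is a face of it, so every ray $\rho_\zeta$ of $\sigma_T$ has $\zeta\in\mathcal{R}$. Invoking the compatibility hypothesis once more --- in the form that $\mathfrak{A}_{\{a,b\}}$ is compatible with $\mathfrak{A}_{[c]}$ only for $c\notin\{a,b\}$, and with $\mathfrak{A}_R$ only when $\{a,b\}$ lies in a block of $R$ --- one excludes the case that $A_T$ contains an index outside $\bigcup_m A_{R_m}$ or straddling two of the $A_{R_m}$, and concludes $A_T\subseteq A_{R_m}$ for a unique $m$, i.e.\ $\sigma_T\preceq\sigma_{R_m}$. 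Thus $\mathcal{S}'$ is the disjoint union of the families $\mathcal{S}'_m:=\{\sigma\in\mathcal{S}';\ \sigma\preceq\sigma_{R_m}\}$, and each $\mathcal{S}'_m$ is nonempty because it must carry the rays of $\sigma_{R_m}$. If $s\ge1$, the ray $\rho_{\eta_1}$ of $\sum_{\tau\in\mathcal{H}}\tau$ is a ray of no $\sigma_{R_m}$ (as $\eta_1\cap A_{R_m}=\emptyset$), hence of no member of $\mathcal{S}'$, which is absurd. If $s=0$ then $t\ge2$ and $\bigcup_{\sigma\in\mathcal{S}'}(\sigma\setminus\{0\})$ is the disjoint union of the $t\ge2$ nonempty sets $\bigcup_{\sigma\in\mathcal{S}'_m}(\sigma\setminus\{0\})$, which lie in the cones $\sigma_{R_m}$ meeting pairwise only in $\{0\}$, contradicting conjunctness. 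In either case we reach a contradiction, and so $\mathcal{C}$ is geometrically nested.
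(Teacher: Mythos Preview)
Your argument for the second requirement --- that the sum does not lie in $\langle\mathcal S\rangle$ --- is correct and follows a more structural path than the paper's. You first extract from compatibility plus incomparability the disjointness of the blocks $A_{R_m}$ and the fact that each $\eta_l$ avoids them all, then use this to force every $\sigma_T\in\mathcal S'$ under a unique $\sigma_{R_m}$ and derive a contradiction either from an unaccounted ray $\rho_{\eta_1}$ or from disconnectedness of $\mathcal S'$. The paper instead argues the contrapositive via a case split on $|\mathcal H\cap\mathcal S|$ (equal to $0$, $1$, or $\ge 2$), in each case chasing rays between $\mathcal H$ and $\mathcal S'$ to exhibit an explicit incompatible pair in $\mathcal N$. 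Both arguments rely on the fact that $\sum_{\tau\in\mathcal H}\tau\in\Sigma_1$ in order to identify the rays of the sum with the union of the rays of its summands.

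The genuine gap is in your treatment of the first requirement, that $\sum_{\tau\in\mathcal H}\tau\in\Sigma_1$. You flag it as ``the main obstacle'' but do not carry it out, and the specific route you propose --- verifying condition~$(*)$ of Proposition~\ref{pro:ffaces} for $\mathbf N_0\setminus\mathcal R$ --- fails. For $n=5$ take $\mathcal H=\{\sigma_{R_1},\sigma_{R_2}\}$ with $A_{R_1}=\{2,3\}$ and $A_{R_2}=\{4,5\}$; then $\{0,1\}$ and $\{2,4\}$ both lie in $\mathbf N_0\setminus\mathcal R$, but neither $\{0,2\}$ nor $\{0,4\}$ does, so $(*)$ is violated. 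To make your approach work you would need to produce a genuine $\overline Y$-set $J\subseteq\mathbf N_0\setminus\mathcal R$ with $\lambda_1^\circ\subseteq\omega_J^\circ$ (in the example, removing $\{0,1\}$ yields such a $J$), and you have not indicated how to do this in general. It is worth noting that the paper's own proof does not address this condition either: the clause ``$\in\Sigma_1$'' is folded into the contrapositive hypothesis alongside membership in $\langle\mathcal S\rangle$, and the paper only derives an incompatible pair from the conjunction of the two.
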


\begin{proof}
Consider a subset $\mathcal H\subseteq\mathcal C$ of imcomparable elements with $|\mathcal H|\ge 2$. Moreover, take $\mathcal S'\subseteq\mathcal S$ to be a non-empty conjunct subset. Assuming that
 \[
  \sigma:=\sum_{\tau\in S'}\tau\ =\ \sum_{\tau\in\mathcal H}\tau\in\Sigma_1
 \]
holds we have to show that there exist an incompatible pair in $\mathcal N$.

Recall that the cones of $\mathcal S=\{\sigma_R;\ R\in \mathbf S_{\ge 2}\}$ have the form
\[
 \sigma_R=\mathrm{cone}(v_\eta;~\eta\subseteq \{0\}\cup A_R)
 \qquad\text{where}\qquad
 1\notin A_R\in R.
\]
%
%
%
Consider two cones $\sigma_1,\sigma_2\in\Sigma_1$ such that their sum lies in $\Sigma_1$ as well. Since $\Sigma_1$ is simplicial, the rays of $\sigma_1+\sigma_2$ are precisely given by the union of the rays of $\sigma_1$ and $\sigma_2$. In particular, if $\rho$ is a ray of some $\tau\in\mathcal H$, then there exists $\tau'\in\mathcal S'$ such that $\rho$ is a ray of $\tau'$. Clearly, the same is true with $\mathcal H$ and $\mathcal S'$ exchanged. 

If $|\mathcal H\cap\mathcal S|=0$ holds, i.e. $\mathcal H$ is a subset of $\mathcal V$, then one easily sees that there exist $\rho_{[i]},\,\rho_{ij}\in\mathcal H$. Clearly, $[i]$ and $\{i,j\}$ are incompatible, hence $\mathfrak A_{[i]},\,\mathfrak A_{\{i,j\}}\in\mathcal N$ are the incompatible partitions.

We consider the case $|\mathcal H\cap \mathcal S|=1$ and denote the single cone in $\mathcal H\cap \mathcal S$ by $\sigma_R$. Since $|\mathcal H|\ge 2$ holds there exists an element $\rho\in\mathcal H\cap \mathcal V$. We distinguish two subcases.

In the first case let this ray be of the form $\rho=\rho_{[i]}$. Then we find $\sigma_S\in\mathcal S'$ with $\rho\seite\sigma_S$. From the special form of the cone $\sigma_R$ we know that there also exists $j\in N$ with $\rho_{ij}\seite\sigma_S$. By the assumption made on $\mathcal H$ we have $\rho_{[i]}\not\seite\sigma_R$; and the special form of $\sigma_R$ then means that also $\rho_{ij}\not\seite\sigma_R$ holds. Hence $\rho_{ij}$ lies in $\mathcal H$ and $\mathfrak A_{[i]},\,\mathfrak A_{\{i,j\}}\in\mathcal N$ are the incompatible collections.

In the second case where $\rho=\rho_{ij}$ holds we again find $\sigma_S\in\mathcal S'$ with $\rho_{ij}\seite\sigma_S$. From the special form of the cone $\sigma_S$ we know that both $\rho_{[i]}$ and $\rho_{[j]}$ are rays of $\sigma_S$. Since $\rho_{ij}\not\seite\sigma_R$ holds, at least one of the rays $\rho_{[i]}$, $\rho_{[j]}$ is not a ray of $\sigma_R$. Without loss of generality this implies that again $\rho_{[i]}$ lies in $\mathcal H$ and $\mathfrak A_{[i]},\,\mathfrak A_{\{i,j\}}\in\mathcal N$ are the incompatible collections.

Now we assume that $|\mathcal H\cap\mathcal S|\ge 2$ holds. Then there exist $\sigma_R,\,\sigma_S\in\mathcal H\cap \mathcal S$. For $\eta,\zeta\in\mathbf N$ let $\rho_\eta\seite\sigma_R$ and $\rho_\zeta\seite\sigma_S$ be rays such that $\rho_\eta\not\seite\sigma_S$ and $\rho_\zeta\not\seite\sigma_R$ hold. Since $\mathcal S'$ is conjunct, we find $\xi_1,\ldots,\xi_r\in\mathbf N$ with 
\[
 \xi_1=\eta,\qquad
 \xi_r=\zeta,\qquad
 \rho_{\xi_i}\seite\sigma
 \qquad\text{and}\qquad
 \xi_i\cap \xi_{i+1}\not=\emptyset.
\]
Let $i'$ be the smallest index, for which $\rho_{\xi_{i'}}$ is not a ray of $\sigma_{R}$. If $\rho_{\xi_{i'}}$ lies in $\mathcal H$, then we know $\rho_{\xi_{i'}}\not\seite\sigma_R$ holds. This means that $\xi_{i'}$ and $R$ are incompatible. If $\rho_{\xi_{i'}}$ does not lie in $\mathcal H$, then there exists an $\sigma_{R'}\in\mathcal H$ such that $\rho_{\xi_{i'}}$ is a ray of $\sigma_{R'}$. This implies that $R$ and $R'$ are incompatible.
\end{proof}

\begin{proof}[Proof of Proposition \ref{pro_gkz_is_nested}]
Consider the cone $\sigma\in\Sigma^\Delta$. In order to show show that $\sigma$ lies in $\Sigma_r$ we can without loss of generality assume that $\sigma$ is maximal. Let $\mathfrak B$ be the associated normal $\mathbf N_0$-collection with
\[
 \sigma\ =\ \bigcap_{I\in\mathfrak B}\sigma_{\mathbf N_0\setminus I}.
\]
By Propositions~\ref{pro:cones_of_sigma_s1}, \ref{pro:cones_of_sigma_s2} we know that $\mathcal M(\mathfrak B)$ is a set of compatible normal $\mathbf N_0$-collections. Furthermore, by Proposition~\ref{pro:rays_general}
\[
 \sigma
 \ =\ 
 \mathrm{cone}(\nu_\mathfrak A;~\mathfrak A\in\mathcal M(\mathfrak B))
 \ =\ 
 \mathrm{cone}(\rho,\,\nu_R;\ \rho\in\mathcal V\cap\mathcal C,\ \sigma_R\in\mathcal S\cap\mathcal C)
\]
holds. Lemma~\ref{lem:nested} shows that $\mathcal C=\{\rho_\eta,\,\sigma_R;~\mathfrak A_\eta,\mathfrak A_R\in\mathcal M(\mathfrak B)\}$ is geometrically nested in $\mathcal V\cup\mathcal S$. And finally, from Proposition~\ref{pro_main_fan} we infer that $\sigma$ lies in $\Sigma_r$.
\end{proof}

\begin{proof}[Proof of Theorem~\ref{thm:blow-up}]
The Theorem now follows directly from Proposition~\ref{pro_ambient_P1} and Corollary~\ref{cor:gkz_is_nested}.
\end{proof}

\begin{proof}[Proof of Theorem~\ref{thm:blow-up_2}]
As in the reductive case we performed the first blow-up in Proposition~\ref{pro_ambient_P1}. In Proposition~\ref{pro:enveloped_quotient} determined the subset of $X_1$ that has to be removed due to the fact that the morphism $\kappa\colon \KK^{2n}\to\bigwedge^2\KK^{n+1}$ is not surjective. Finally the remaining blow-ups are performed as in the reductive case, see Corollary~\ref{cor:gkz_is_nested}.
\end{proof}

\end{document}